	\theoremstyle{plain}
	\newtheorem{theorem}{Theorem}
	\newtheorem{prop}[theorem]{Proposition}
	\newtheorem{corollary}[theorem]{Corollary}
	 \newtheorem{conjecture}[theorem]{Conjecture} 
	\theoremstyle{definition}
	\newtheorem{defn}[theorem]{Definition}
	\theoremstyle{remark}
	\newtheorem{remark}[theorem]{Remark}
	\DeclarePairedDelimiter{\sprod}{\langle}{\rangle}
	\DeclarePairedDelimiter{\fp}{\{}{\,|\kern-0.2em \}}
	\DeclarePairedDelimiter{\FK}{[}{\,|\kern-0.2em ]}
	\DeclarePairedDelimiter{\p}{\{}{\}}
	\newcommand{\E}{\mathrm {e}}
	\newcommand{\N}{\mathbb{N}}
	\newcommand{\K}{\mathbb{K}}
	\renewcommand{\phi}{\varphi}
	\newcommand{\im}{\operatorname{im}}
	\newcommand{\id}{\operatorname{id}}
	\newcommand{\del}{\partial}
	\renewcommand{\O}{\mathcal{O}}
	\newcommand{\factor}[2]{\left.\raisebox{.2em}{$#1$}\middle/\raisebox{-.2em}{$#2$}\right.}
	\newcommand{\ph}{[[t]]}
	\newcommand{\HC}{\mathrm{H\!C}}
	\newcommand{\HH}{\mathrm{H\!H}}
	\newcommand{\Hom}{\operatorname{Hom}}
	\newcommand{\Der}{\operatorname{Der}}
	\renewcommand{\id}{\operatorname{id}}
	\newcommand{\opp}{\mathrm{opp}}
	\newcommand{\1}{\mathbbm{1}}
	\newcommand{\delaa}{\del_{\alpha\alpha}}
	\newcommand{\delma}{\del_{\mu\alpha}}
	\newcommand{\delam}{\del_{\alpha\mu}}
	\newcommand{\delmm}{\del_{\mu\mu}}
	\newcommand{\delmb}{\del_{\bar{\mu}}}
	\newcommand{\aHH}{\mathrm{\tilde{H}\!H}}
	\newcommand{\aHC}{\mathrm{\tilde{H}\!C}}
	\newcommand{\aHZ}{\mathrm{\tilde{H}\!Z}}
	\newcommand{\aHB}{\mathrm{\tilde{H}\!B}}
	\newcommand{\vera}[1]{%
	\begin{tikzpicture}[scale=0.3,point/.style={draw,shape=circle,fill=blue,minimum size=2,inner sep=0}, circ/.style={draw,shape=circle,minimum size=5,inner sep=0}]
	 \node [circ] (a0) at ( #1/2 + 0.5,1) {};
	 \draw (a0) -- +(0,0.7) ;
	 \foreach  \i in{1,...,#1}
	 {
	\draw (\i,0) -- (a0);
	}
	\end{tikzpicture}%
	}
	\newcommand{\verm}[1]{%
	\begin{tikzpicture}[scale=0.3,point/.style={draw,shape=circle,fill=blue,minimum size=2,inner sep=0},  circ/.style={draw,shape=circle,minimum size=5,inner sep=0}]
	 \node [point] (a0) at ( #1/2 + 0.5,1) {};
	 \draw (a0) -- +(0,0.7) ;
	 \foreach  \i in{1,...,#1}
	 {
	\draw (\i,0) -- (a0);
	}
	\end{tikzpicture}%
	}
	\newcommand{\vermk}{%
	\begin{tikzpicture}[scale=0.3,point/.style={draw,shape=circle,fill=blue,minimum size=2,inner sep=0},  circ/.style={draw,shape=circle,minimum size=5,inner sep=0}]
	 \node [point] (a0) at ( 2,1.3) {};
	 \draw (a0) -- +(0,0.7) ;
	 \draw (1,0) -- (a0);
	 \draw (3,0) -- (a0);
	 \draw[dotted] (1,0) -- node[above,scale=.7] {$k$} (3,0);
	\end{tikzpicture}%
	}
	\newcommand{\verak}{%
	\begin{tikzpicture}[scale=0.3,point/.style={draw,shape=circle,fill=blue,minimum size=2,inner sep=0},  circ/.style={draw,shape=circle,minimum size=5,inner sep=0}]
	 \node [circ] (a0) at ( 2,1.3) {};
	 \draw (a0) -- +(0,0.7) ;
	 \draw (1,0) -- (a0);
	 \draw (3,0) -- (a0);
	 \draw[dotted] (1,0) -- node[above,scale=.7] {$k$} (3,0);
	\end{tikzpicture}%
	}
	\title{$\alpha$-type Hochschild cohomology of Hom-associative algebras and Hom-bialgebras}
	\author{Benedikt Hurle and Abdenacer Makhlouf}
\begin{document}
\maketitle
	
\begin{abstract}
	In this paper we define a new cohomology for multiplicative Hom-associative algebras, which generalize Hochschild cohomology and fits with deformations of Hom-associative algebras including the structure map  $\alpha$. 
	It is a generalization of the known Hochschild-type cohomology for Hom-associative algebras  \citep{homcoho}. 
	Moreover, we provide various observations and similarly a new cohomology of Hom-bialgebras extending the Gerstenhaber-Schack cohomology for Hom-bialgebras given in \citep{MR3640817}.
\end{abstract}

\tableofcontents

\section*{Introduction}The first instances of Hom-type algebras appeared when $q$-deforming algebras of vector fields like Witt and Virasoro algebras, that is replacing usual derivation by Jackson derivation, see \citep{silvestrov06}. 	
Hom-associative algebras, generalizing associative algebras,  were introduced  in \citep{homalg}. The main feature of Hom-type algebras is that classical identities are twisted by a homomorphism, usually denoted by $\alpha$. Many concepts and results were extended to Hom-type algebras. An attempt of building a cohomology for Hom-associative algebras together with a deformation theory were considered in \citep{makhlouf_ParamFormDefHomAssLie}.
In  \citep{homcoho},  a Hochschild-type cohomology complex was constructed for multiplicative Hom-associative algebras. This cohomology  fitted  with a 1-parameter formal deformation theory where the multiplication is deformed and the structure map kept fixed. 

In this paper  a new type of Hochschild cohomology is provided  for Hom-associative algebras and Hom-bialgebras, taking into account the structure map $\alpha$. It deals with  pairs in the complex and  allows to study deformations, where the product but also the structure map $\alpha$ are  deformed. This is why we call this cohomology $\alpha$-type Hochschild cohomology.
This approach can also be extended to other types of Hom-algebras, we will consider the case of Hom-Lie algebras in a forthcoming paper.
	
The paper is organized  as follows.
First we recall some basics,  the definitions of Hom-associative algebras and Hom-coalgebras, as well as their corresponding module and comodule structures.  In Section 3, we define the $\alpha$-type Hochschild cohomology complex and provide some observations. We compute this cohomology for associative algebras, viewed as  Hom-associative algebra ($\alpha=\id$), and show how it relates to the cohomology given in \citep{homcoho}.
Moreover, we study  its behaviour under Yau twist,  give a way of computing it if $\alpha$ is invertible and 
 discuss its corresponding $L_\infty$-structure. 	
In \Cref{sc:deformation}, we show that  deformations of a Hom-associative algebras, including deformation of the structure map,  are governed by this new cohomology. In \Cref{sc:examples},  we compute the cohomology for some examples.
Finally, in  \Cref{sc:Hombialgeras}, we  extend this cohomology to the Hom-bialgebras case, by providing a cohomology complex. A complete study for Hom-bialgebras will be done in a different paper.

\section{Preliminaries}
	
Let $\K$ be a field of characteristic 0. One could also consider a commutative ring here or a field of another characteristic, but for the sake of simplicity we stick to  this case. All vector spaces and algebras will be considered over this field.
We begin with the definition of Hom-associative algebras, which originates along with the definitions of other types of Hom-algebras in \citep{homalg}.	
	
\begin{defn}[Hom-(associative) algebra]
	A Hom-associative algebra $(A,\mu,\alpha)$, is a vector space  $A$, with a linear map $\alpha: A \to A$, called the structure map, and a linear map $\mu:A \otimes A \to A$, called multiplication, such that 
	\begin{equation}
		\mu\circ ( \id \otimes \alpha) =  \mu \circ ( \alpha \otimes \id).
	\end{equation}
	It is called multiplicative if $\alpha$ is an algebra morphism, i.e.
	\begin{equation}
		\mu \circ(\alpha \otimes \alpha) = \alpha\circ \mu.
	\end{equation}
	Note that this property is not required by all authors.
	In this paper however, we will consider only multiplicative Hom-associative algebras and simply call them Hom-associative algebras.
	A unit is an element $\1  \in A$ such that $ \mu(\1 \otimes \id)= \alpha = \mu(\id \otimes \1)$  and $\alpha(\1) = \1$.
\end{defn}
Note that the unit can also be considered as a map $\K \to A$ determined by $1_\K \mapsto \1_A$, what we will do in the following if it is convenient.
	
Usually we write $x y $ or $x \cdot y$ for $\mu(x \otimes y)$, so the Hom-associativity 
becomes $\alpha(x)(yz) = (xy)\alpha(z)$ for all $x,y,z \in A$, and the condition for the unit $\1 \cdot x = \alpha(x) = x \cdot \1$.

A morphism of Hom-associative algebras $(A,\mu_A,\alpha_A)$ and $(B,\mu_B,\alpha_B)$ is a linear map $ \phi : A \to B$, such that 
$\phi\circ  \alpha_A = \alpha_B\circ  \phi$ and $ \mu_B\circ  (\phi \otimes \phi) = \phi \circ  \mu_A$.
	
The tensor product of two Hom-associative algebras is again a Hom-associative algebra, with multiplication given by 
$(a \otimes b) ( a' \otimes b') = (aa') \otimes (bb')$ and structure map $\alpha \otimes \beta$.

\begin{remark}[Yau twist]
	Let $(A,\mu,\alpha)$ be a Hom-associative algebra and $\gamma:A\rightarrow A$ be an algebra morphism, then $(A,\gamma\circ \mu, \gamma \circ \alpha)$ is again a Hom-associative algebra, which we denote by $A_\gamma$.
\end{remark}
	
If a Hom-associative algebra is of the form $A_\gamma$ for an associative algebra $A$ and an endomorphism $\gamma$ of it, we call it of associative-type.

\begin{prop}\label{th:assass}
	Let $(A,\mu,\alpha)$ be a Hom-associative algebra such that $\alpha$ is invertible, then $A_{\alpha^{-1}}$ is an ordinary associative algebra. So $A$ is of associative-type.
\end{prop}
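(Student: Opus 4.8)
The plan is to realise $A_{\alpha^{-1}}$ via the Yau twist of the preceding remark and then to observe that a Hom-associative algebra whose structure map is the identity is nothing but an ordinary associative algebra. The only step requiring an argument is that $\alpha^{-1}$ is an algebra morphism, since this is exactly the hypothesis needed to invoke the Yau twist construction.

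First I would verify that $\alpha^{-1}$ is an algebra morphism of $(A,\mu,\alpha)$. Commutation with the structure map, $\alpha^{-1}\circ\alpha=\alpha\circ\alpha^{-1}$, is automatic. For compatibility with the product I would begin from multiplicativity, $\mu\circ(\alpha\otimes\alpha)=\alpha\circ\mu$, compose on the left with $\alpha^{-1}$ and precompose with $\alpha^{-1}\otimes\alpha^{-1}$; using $\alpha\circ\alpha^{-1}=\id$ this gives $\mu\circ(\alpha^{-1}\otimes\alpha^{-1})=\alpha^{-1}\circ\mu$, as needed. I expect this to be the only genuine content of the proof.

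Taking $\gamma=\alpha^{-1}$ in the Yau twist remark then shows that $A_{\alpha^{-1}}=(A,\,\alpha^{-1}\circ\mu,\,\alpha^{-1}\circ\alpha)=(A,\,\alpha^{-1}\circ\mu,\,\id)$ is again a multiplicative Hom-associative algebra. Denoting its product by $x\ast y=\alpha^{-1}(xy)$, the Hom-associativity relation $\alpha(x)(yz)=(xy)\alpha(z)$, read for $\ast$ with structure map $\id$, becomes $x\ast(y\ast z)=(x\ast y)\ast z$. Hence $A_{\alpha^{-1}}$ is an ordinary associative algebra.

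Finally I would recover $A$ as a twist of this associative algebra. Twisting $A_{\alpha^{-1}}$ by $\alpha$ gives product $\alpha\circ(\alpha^{-1}\circ\mu)=\mu$ and structure map $\alpha\circ\id=\alpha$, so $A=(A_{\alpha^{-1}})_\alpha$; a one-line check from multiplicativity, $\alpha(x\ast y)=\alpha(\alpha^{-1}(xy))=xy=\alpha^{-1}(\alpha(x)\alpha(y))=\alpha(x)\ast\alpha(y)$, confirms that $\alpha$ is an endomorphism of $A_{\alpha^{-1}}$, so the twist is legitimate. This exhibits $A$ as the Yau twist of an associative algebra, i.e.\ $A$ is of associative-type.
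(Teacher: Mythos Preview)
Your argument is correct and follows exactly the route the paper intends: apply the Yau twist remark with $\gamma=\alpha^{-1}$, so that the new structure map is $\alpha^{-1}\circ\alpha=\id$ and Hom-associativity becomes ordinary associativity. The paper's own proof is the one-liner ``Follows directly from the previous remark''; you have simply spelled out the details (that $\alpha^{-1}$ is an algebra morphism and that $(A_{\alpha^{-1}})_\alpha=A$) which the paper leaves implicit.
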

\begin{proof}
	Follows directly from the previous remark.
\end{proof}

\begin{defn}[Derivation, $\alpha$-derivation]
	Given a Hom-associative algebra  $A$,  a derivation is a linear map $\phi: A \to A$ such that 
	\begin{equation}
		\phi(xy) = \phi(x)y + x\phi(y)
	\end{equation}
	and an $\alpha$-derivation is a linear map $\phi: A \to A$, such that
	\begin{equation}
		\phi(xy) = \phi(x) \alpha(y) + \alpha(x)\phi(y).
	\end{equation}
\end{defn}
	
For a derivation or an $\alpha$-derivation $\phi$ we have  $2 \alpha(\psi(\1)) = \psi(\1)$, from which one can only conclude that $\psi(\1)$ is an eigenvector of $\alpha$  with eigenvalue $\frac{1}{2}$. However if $\phi(\1) =0$, which is usually the case, we get $\phi(\alpha(x))=\phi(\1 x) = \phi(\1)\alpha(x) + \alpha(\1) \phi(x) = \alpha ( \phi(x))$, so $\alpha$ and $\phi$ commute. 
We want to give an example of $\alpha$-derivation, where $\phi(\1) \neq 0$. 
Consider the polynomial algebra $\K[x]$ and the morphism $\alpha$ of it, determined by $\alpha(x) = \frac{1}{2} x$. On the Hom-associative algebra $(\K[x], \cdot, \alpha)$ obtained by Yau twist with $\alpha$, we define an $\alpha$-derivation by $\phi(p) = x \alpha(p)$ for $p \in\K[x]$. In fact $\phi(p) \cdot \alpha(q) = \alpha( x\alpha(p) \alpha(q)) = \frac{1}{2}x \alpha^2(pq) = \alpha(p) \cdot \phi(q)$ and $\phi(p\cdot q) = x \alpha^2(pq)$ for $p,q \in \K[x]$.\\

For later use, we also define conjugate $\alpha$-derivation.
\begin{defn}[conjugate $\alpha$-derivation]
	Let $A$ be a Hom-associative algebra, such that $\alpha$ is invertible. Then a linear map $\phi: A\to A$ is called a conjugate $\alpha$-derivation if it satisfies 
	\begin{equation}
		\phi(x y) = \alpha(x) \alpha^{-1}\phi \alpha (y) + \alpha^{-1}\phi \alpha(x) \alpha(y).
	\end{equation}
\end{defn}
	
Obviously if $\alpha$ and $\phi$ commute a conjugate $\alpha$-derivation is the same as an $\alpha$-derivation. For a conjugate $\alpha$-derivation $\phi$, one has $\phi(\1) =0$. \\

Dually to the concept of Hom-associative algebras, there exists also  the concept of Hom-coassociative coalgebra. 
	
\begin{defn}[Hom-(coassociative) coalgebra]
	A Hom-(coassociative) coalgebra $(C,\Delta,\beta)$ is a vector space $C$ with a linear map $\beta: C \to C$, called the structure map, and  a comultiplication $\Delta: C \to C \otimes C$, which satisfies 
	\begin{equation}
		(\Delta \otimes \beta)\circ  \Delta = (\beta \otimes \Delta) \circ \Delta.
	\end{equation}
	It is called comultiplicative if $\Delta\circ  (\beta \otimes \beta) = \beta \circ \Delta$. In this paper we only consider comultiplicative Hom-coalgebras and simply call them Hom-coalgebras.
	A counit is a map $\epsilon:C \to \K$, such that $(\id \otimes \epsilon)\circ \Delta = \beta = (\epsilon \otimes \id) \circ \Delta$ and $ \epsilon\circ  \beta = \epsilon$.
\end{defn}
We denote the structure map here by $\beta$ and not by $\alpha$, since when dealing with bialgebras we want to distinguish them.\\

A morphism of Hom-coalgebras $(C,\Delta_C,\beta_C)$ and $(D,\Delta_D,\beta_D)$ is a linear map $ \phi : C \to D$, such that 
$\phi\circ \beta_C = \beta_D\circ  \phi$ and $\Delta_D\circ  \phi = (\phi \otimes \phi)\circ \Delta_C$.	
	
\begin{remark}
	Given a Hom-coalgebra $(C,\Delta,\beta)$ and an endomorphism $\gamma: C\rightarrow C$, the triple $(C,\Delta\gamma,\beta\gamma)$ is a again a Hom-coalgebra. We call it the Yau twist of $C$ by $\gamma$. If a Hom-coalgebra is the Yau twist of a coassociative coalgebra, we call it of associative-type.
\end{remark}

\begin{defn}[Hom-(associative) bialgebra]
	A Hom-(associative) bialgebra is tuple $(A,\mu,\Delta,\1, \epsilon, \alpha, \beta)$ such that 
	$(A,\mu,\1, \alpha)$ is a   Hom-associative algebra, $(A,\Delta, \epsilon, \beta)$ is a  Hom-coalgebra  and they are compatible in the sense that $\Delta$ is a Hom-algebra morphism and $\mu$ is a Hom-coalgebra morphism. Also the unit is assumed to be a Hom-coalgebra morphism and the counit a Hom-algebra morphism. We  also require that $\alpha$ and $\beta$ commute. Written as identities, this means
	\begin{align*}
		\alpha \circ \beta & = \beta \circ \alpha,               &   
		\Delta(xy) &= \Delta(x) \Delta(y), \\
		\Delta\circ   \alpha     & =   (\alpha \otimes \alpha) \circ \Delta, &   
		\beta\circ  \mu &= \mu\circ  ( \beta \otimes \beta), \\
		\Delta(\1)          & = \1 \otimes \1,                      &   
		\epsilon(xy) &= \epsilon(x) \epsilon(y)\\
		\beta(\1) &= \1, &  \epsilon \circ \alpha &=\epsilon, \\
		\epsilon(\1) &= 1 .
	\end{align*}
\end{defn}

Often only the case where $\alpha = \beta $ or $\beta = \alpha^{-1}$ is considered, but here we distinguish between the two structure maps since it will make the construction of the cohomology clearer.

By a morphism between two Hom-bialgebras $A$ and $B$, we mean a linear map $\phi: A \to B$, which is a morphism of the corresponding Hom-algebra and Hom-coalgebra structures.
	
\begin{remark}[Yau twist]
	Let $\gamma$ be a morphism of a Hom-bialgebra  $(A,\mu,\Delta,\1, \epsilon, \alpha, \beta)$,  then also $(A,\gamma \mu,\Delta,\1, \epsilon, \gamma \alpha, \beta)$, $(A,\mu,\Delta\gamma,\1, \epsilon, \alpha, \gamma\beta)$ and  $(A,\gamma\mu,\Delta\gamma,\1, \epsilon, \gamma\alpha, \gamma\beta)$ are Hom-bialgebras. 
\end{remark}

A Hom-bialgebra $H$ is called a Hom-Hopf algebra if it possesses an antipode, i.e. an anti-homomorphism  $S:H \to H$ which satisfies, $ \mu (S \otimes \id)  \Delta =   \alpha \1 \epsilon  \beta = \mu (\id \otimes S)  \Delta$. 
There is also the concept of  Hom-Hopf algebras, where the condition for the antipode is relaxed, see e.g. \citep{UEHomLie}, but we will not need this in the sequel.

\section{Modules, Comodules and Bimodules}
	
In this section we  give the basic definitions and properties of modules and bimodules for Hom-associative algebras, and dually comodules for Hom-coalgebras.

\begin{defn}[Hom-algebra module]
	Let $(A,\mu,\alpha)$ be a Hom-associative algebra and $(M,\beta)$ be a Hom-module, i.e. a vector space $M$ with a linear map $\beta:M \to M$. Further let $\rho:A \otimes M \to M, (a \otimes m)\mapsto a \cdot m$ be a linear map, then $(M,\rho)$ is called a (left) $A$-module  if 
	\begin{align}
		(a b) \cdot \beta(m) & = \alpha( a ) ( b \cdot m). 
	\end{align}
	Since we are considering multiplicative Hom-associative algebras, we also assume 
	\begin{equation}
		\beta(a \cdot m) = \alpha(a) \cdot \beta(m).
	\end{equation}
	This property is called multiplicativity.
\end{defn}
Similarly one can define a right $A$-module.
Obviously $A$ is an $A$-module, where the action is giving by the multiplication in $A$.

\begin{defn}[Hom-algebra bimodule]
	Let $A$  be a Hom-associative algebra, then an $A$-bimodule is a Hom-module $(M,\beta)$, with two maps $\rho: A \otimes M \to M, a \otimes m \mapsto a \cdot m$ and $\lambda: M \otimes A \to M, a \otimes m \mapsto m \cdot a$, such that $\rho$ is a left and $\lambda$ a right module structure and 
	\begin{equation}
		\alpha(a) \cdot( m \cdot b) = ( a \cdot m) \cdot \alpha(b).
	\end{equation}
\end{defn}

It is easy to see that if $V$ is an $A$-bimodule then $A  \oplus V$ is a Hom-associative algebra.

Given a Hom-coassociative coalgebra one can regard its comodules, dual to the case of Hom-associative algebras and modules. 
	
\begin{defn}[Hom-coalgebra comodule]
	Let $(C,\Delta,\beta)$ be a Hom-coalgebra and $(M, \gamma)$ be a  Hom-module, with a map $\rho: M \to A \otimes M$, then $M$ is called a (left) $C$-comodule  if 
	\begin{align}
		( \beta \otimes \rho) \rho = (\Delta \otimes \beta) \rho. 
	\end{align}
	We also assume that it is comultiplicative, i.e. 
	\begin{equation}
		\rho \gamma = (\beta \otimes \gamma) \rho.
	\end{equation}
\end{defn}
Similarly one can define a right $C$-comodule.
\begin{defn}[Hom-coalgebra bicomodule]
	Let $(C,\Delta,\beta)$  be a Hom-coalgebra, then a $C$-bicomodule is a Hom-module $(M,\gamma)$, with two maps $\rho:  M \to A \otimes M$ and $\lambda: M \otimes A \to M \otimes A, a \otimes m \mapsto m \cdot a$, such that $\rho$ is a left and $\lambda$ a right comodule structure and 
	\begin{equation}
		( \beta \otimes  \rho) \lambda = (\lambda \otimes \beta) \rho.
	\end{equation}
\end{defn}

Let $A$ be a Hom-bialgebra then the tensor product of two $A$-modules $M$ and $N$ is again an $A$-module, where the action is given by $(\rho_M \otimes \rho_N) \tau_{23}  (\Delta \otimes \id \otimes \id)$ or 
\begin{equation}
	a \cdot (m \otimes n) = a_{(1)} \cdot m \otimes a_{(2)} \cdot n.   
\end{equation}
	
Here we made use of the Sweedler notation, i.e. $\Delta a = a_{(1)} \otimes a_{(2)}$, where on the right hand side there is an implicit sum.

Note that given three or more $A$-modules the module structure on the tensor product depends in general on the bracketing of the modules, i.e. $(M_1 \otimes M_2) \otimes M_3$ is not isomorphic to  $M_1 \otimes (M_2 \otimes M_3)$, since the comultiplication is not coassociative. 
But we can define an action on tensor products with more factors independent of the bracketing  by including $\beta$ in the definition. We will briefly recall this here for details see \citep{MR3640817}.  
For this we define $\Delta^2_\beta = \Delta$ and 
\begin{equation}
	\Delta^{n+1}_\beta  = (\Delta \otimes \beta^{\otimes (n-1)} ) \Delta^n_\beta.
\end{equation}
This satisfies 
\begin{equation}
	\Delta^{n+m}_\beta  = (\Delta^{n}_\beta  \beta^{m-1} \otimes \Delta^{m}_\beta \beta^{n-1}) \Delta. \label{eq:mu1}
\end{equation}
Let $(M_i,\rho_i)$ be $A$-modules then the action on $M_1 \otimes \dots \otimes M_n$ is defined by
\begin{equation}
	(\rho_1 \otimes\dots \otimes \rho_n ) \tau_{(2,n)}( \Delta^n_\beta \otimes \id^n).
\end{equation}
Here $ \tau_{(2,n)}$ denotes the permutation $(1,\dots ,2n) \mapsto (1,n+1,2,n+2,\dots)$.
	
We also have 
\begin{equation}
	x \cdot ( v_1 \otimes \dots \otimes v_{m+n}) = \left(  \beta^{m-1}(x) \cdot (v_1\otimes \dots \otimes v_n) \right) \otimes   \left(  \beta^{n-1}(x) \cdot (v_{n+1}\otimes \dots \otimes v_{n+m}) \right) \label{eq:mod1}
\end{equation}
If all  the $M_i$ are bimodules the tensor product is again a bimodule.
Similarly the tensor product of comodules is again in a comodule.

\section{$\alpha$-type Hochschild cohomology of Hom-associative algebras}\label{Section3}
	
In this section we define a  cohomology for Hom-associative algebras, which takes also into account the structure map $\alpha$.
We will see that it is a generalization of the cohomology which  is usually considered, see \citep{homcoho}.  
The complex is different from the one used  for the Hochschild cohomology, in the sense that the cochains are given by pairs. In the case of associative algebras, i.e. $\alpha = \id$, it is completely determined by the Hochschild cohomology, but in the general case it contains more information. It extends the cohomology given in \citep{makhlouf_ParamFormDefHomAssLie}. 
With this cohomology it is also possible to consider deformations of Hom-associative algebras, where the multiplication and the structure map are deformed. 
	
We start by giving the complex for the cohomology of a Hom-associative algebra $A$ with values in itself. 
The cochains are given by 
\begin{eqnarray}
	\aHC^n(A) = \aHC^n_\mu(A) \oplus \aHC^n_\alpha(A) = \Hom(A^{\otimes n}, A) \oplus \Hom(A^{\otimes (n-1)},A) \text{ for } n \geq 2,
\end{eqnarray}
$\aHC^1(A) = \aHC^1_\mu(A) \oplus \aHC^1_\alpha(A) = \Hom(A, A) \oplus \{ 0 \}$ and $\aHC^n(A) =\{0\}$ for $n \leq 0$.
In general we will denote a $n$-cochain by the pair $(\phi,\psi)$, where  $\phi \in \aHC_\mu^n$  and $\psi \in \aHC^n_\alpha$.
We have $\alpha \in \aHC^2_\alpha$ and $\mu \in \aHC^2_\mu$, which motivates the names for the two summand. 
	
\begin{remark}
	Note that $\aHC^0$ is the zero space and not as one might expect $\Hom(\K,A) \cong A$. This is so, because otherwise the differential would involve $\alpha^{-1}$, what we do not want, since we consider $\alpha$ to be not necessarily invertible. The same is true for $\aHC^1_\alpha$.  If $\alpha$ is invertible one can add these components and the corresponding differentials. In this case also \Cref{th:cohoass} simplifies. But we will not consider this further.
\end{remark}
	
We define four maps, with domain and range given in the following diagram :
\begin{center}
	\begin{tikzpicture}
		\matrix (m) [matrix of math nodes,row sep=3em,column sep=5em,minimum width=2em] {
			\aHC^n_\mu & \aHC^{n+1}_\mu  \\
			\aHC^n_\alpha & \aHC^{n+1}_\alpha \\
		};  
		\path[->,auto] (m-2-1) edge node[swap]{$\delaa$}                  (m-2-2);
		\path[->,above] (m-2-1) edge node[below] {$\delam$}                  (m-1-2);
		\path[->,below] (m-1-1) edge node[above] {$\delma$}                  (m-2-2);
		\path[->,auto] (m-1-1) edge node {$\delmm$}                  (m-1-2);
		\path   (m-2-1) edge[draw=none]   node [sloped] {$\oplus$} (m-1-1);
		\path   (m-2-2) edge[draw=none]   node [sloped] {$\oplus$} (m-1-2);
	\end{tikzpicture}
\end{center}
	
First the classical differential for Hom-algebras $\delmm : \aHC^{n}_\mu \to \aHC^{n+1}_\mu$
\begin{align}
	\begin{split}
	\delmm \phi (x_1, \ldots, x_{n+1}) & =\alpha^{n-1}(x_1) \phi(x_2, \ldots, x_{n+1})                                            \\
	                                   & + \sum_{i=1}^{n} (-1)^{i} \phi(\alpha(x_1), \dots, x_i x_{i+1}, \dots , \alpha(x_{n+1})) \\
	                                   & + (-1)^{n+1} \phi(x_1, \ldots, x_{n-1}) \alpha^{n-1}(x_{n+1}).                           
	\end{split}
\end{align}
The map $\delaa : \aHC^{n}_\alpha \to \aHC^{n+1}_\alpha$ is  also the classical differential for Hom-associative algebras, but the bimodules structure is modified by $\alpha$, this means
\begin{align}
	\begin{split}
	\delaa \psi (x_1,\ldots, x_n) & = \alpha^{n-1}(x_1) \psi(x_2, \ldots, x_n)                                             \\
	                              & + \sum_{i=1}^{n-1} (-1)^{i} \psi(\alpha(x_1), \dots, x_i x_{i+1}, \dots , \alpha(x_n)) \\
	                              & + (-1)^{n} \psi(x_1, \ldots, x_{n-1}) \alpha^{n-1}(x_n),                               
	\end{split}
\end{align}
the map $\delma : \aHC^{n}_\mu \to \aHC^{n+1}_\alpha$ is the commutator of $\alpha$ and $\phi$ defined by 
\begin{equation}
	\delma \phi (x_1,\ldots, x_n)  = \alpha( \phi(x_1,\ldots, x_n) ) - \phi(\alpha(x_1),\ldots,\alpha( x_n)) 
\end{equation}
and  finally $\delam : \aHC^{n}_\alpha \to \aHC^{n+1}_\mu$ is defined by
\begin{equation}
	\delam \psi (x_1,\ldots, x_{n+1}) = \alpha^{n-2}(x_1 x_2)  \psi(x_3, \ldots, x_{n+1})  -  \psi(x_1, \ldots, x_{n-1}) \alpha^{n-2}(x_{n} x_{n+1}),
\end{equation}
for $x_1, \ldots, x_{n+1}$ in $A$.\\
	
With this we set
\begin{align}
	\del (\phi + \psi) & = (\delmm + \delma) \phi - (\delam + \delaa )\psi \nonumber \\
	                   & = ( \delmm \phi - \delam \psi, \delma \phi - \delaa \psi).   
\end{align}
Note that we are considering a difference  in this formula. We have chosen this convention, because so the single differentials look more natural.  \\
	
Therefore, we get the following theorem : 
	
\begin{theorem}\label{th:delalg}
	The complex $\aHC^\bullet(A)$ is a chain complex with differential  $\del(\phi,\psi) = (\delmm + \delma) \phi - (\delam + \delaa )\psi$ defined as above.
\end{theorem}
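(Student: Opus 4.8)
The plan is to read $\del$ as a $2\times 2$ matrix operator on the direct sum $\aHC^n=\aHC^n_\mu\oplus\aHC^n_\alpha$, namely $\del(\phi,\psi)=(\delmm\phi-\delam\psi,\ \delma\phi-\delaa\psi)$, and to reduce $\del^2=0$ to identities among the four component maps. Since the two summands are independent, a pair vanishes precisely when both components do, so I may test $\del^2$ separately on $(\phi,0)$ and $(0,\psi)$ and then read off each output slot. Expanding yields the four requirements
\[
\delmm\delmm=\delam\delma,\quad \delaa\delaa=\delma\delam,\quad \delma\delmm=\delaa\delma,\quad \delmm\delam=\delam\delaa,
\]
and the theorem is equivalent to these. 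The first two say that the diagonal maps fail to square to zero by exactly the cross-composition through the other sector; the last two say that $\delma$ and $\delam$ intertwine $\delmm$ with $\delaa$.

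I would dispatch the two intertwining identities first. Observe that $\delma\phi=\alpha\circ\phi-\phi\circ\alpha^{\otimes n}$ is just the commutator of $\phi$ with $\alpha$, so $\delma\delmm=\delaa\delma$ reduces to showing that conjugating the Hochschild-type differential by $\alpha$ reproduces the $\alpha$-twisted differential. The only facts needed are multiplicativity $\alpha\circ\mu=\mu\circ(\alpha\otimes\alpha)$ and $\alpha\circ\alpha^{k}=\alpha^{k}\circ\alpha$, which let $\alpha$ slide through each face and through every inserted power $\alpha^{k}$ in the defining sums; a term-by-term matching then gives the identity. The companion relation $\delmm\delam=\delam\delaa$ is handled the same way, now moving the edge factors $\alpha^{n-2}(x_1x_2)$ and $\alpha^{n-2}(x_nx_{n+1})$ of $\delam$ past the faces of $\delmm$ and checking that the $\alpha$-exponents agree.

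The crux is the pair of defect identities, above all $\delmm\delmm=\delam\delma$. I would expand $\delmm(\delmm\phi)$ as the usual double sum over the two slots at which adjacent arguments are multiplied, as in the classical Hochschild computation, while carefully bookkeeping the powers of $\alpha$ deposited on the remaining legs. A useful sanity check: when $\alpha=\id$ one has $\delma=0$, hence $\delam\delma=0$, and the identity collapses to the classical $\delmm\delmm=0$; thus all genuinely new contributions must carry a nontrivial commutator with $\alpha$. In the Hom-setting the interior simplicial cancellations survive because multiplicativity lets the $\alpha$'s commute past one another, but the boundary terms — where the two multiplications meet the first or last argument — no longer cancel, carrying mismatched $\alpha$-powers; Hom-associativity $\alpha(x)(yz)=(xy)\alpha(z)$ is the tool that reorganizes these survivors, and I expect them to assemble exactly into $\alpha^{n-2}(x_1x_2)\,\delma\phi(x_3,\dots)-\delma\phi(\dots,x_{n-1})\,\alpha^{n-2}(x_nx_{n+1})=\delam(\delma\phi)$. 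The remaining identity $\delaa\delaa=\delma\delam$ is a shorter variant of the same computation. The main obstacle is entirely combinatorial: keeping the $\alpha$-exponent on every leg correct through the double sum so that the uncancelled terms match the cross term on the nose; a disciplined indexing of faces together with repeated normalization via multiplicativity is what keeps this tractable. Conceptually, the four identities together express that the pair $(\mu,\alpha)$ is a square-zero (Maurer–Cartan) element for the bracket underlying the $L_\infty$-structure discussed below, which is the structural reason one should expect $\del$ to square to zero.
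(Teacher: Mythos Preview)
Your decomposition into the four identities $\delmm\delmm=\delam\delma$, $\delaa\delaa=\delma\delam$, $\delma\delmm=\delaa\delma$, $\delmm\delam=\delam\delaa$ is exactly what the paper does, and your plan for each piece matches theirs: the paper carries out the brute-force double-sum expansion of $\delmm\delmm\phi$, cancels pairs via Hom-associativity and multiplicativity, and identifies the four surviving boundary terms with $\delam\delma\phi$; then it verifies the two intertwining relations term by term using multiplicativity, just as you outline. One small correction: in your expected form for $\delam(\delma\phi)$ the exponent should be $\alpha^{n-1}$ rather than $\alpha^{n-2}$, since $\delma\phi$ lives in $\aHC^{n+1}_\alpha$ and $\delam$ on that space uses $\alpha^{(n+1)-2}$; the paper's remaining terms indeed carry $\alpha^{n-1}(x_1x_2)$ and $\alpha^{n-1}(x_{n+1}x_{n+2})$.
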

\begin{proof}
	We only need to show $\del^2 =0$.
	This is a lengthy but straightforward calculation. We will give it here to some extend.

	\begin{align}
		\delmm \delmm \phi & (x_1, \dots, x_{n+2})  =                                                                                                                                                 
		\alpha^n(x_1) (\delmm \phi) (x_2,\dots, x_{n+2})  \nonumber\\
		                   & + \sum_{i=1}^{n+1}(-1)^i (\delmm \phi)(\alpha(x_1), \dots, x_i x_{i+1}, \dots,\alpha(x_{n+2}))  \nonumber                                                                \\
		                   & + (-1)^n (\delmm \phi) (x_1, \dots, x_{n+1}) \alpha^n(x_{n+2})  \nonumber                                                                                                \\
		                   & = \alpha^n(x_1) ( \alpha^{n-1}(x_2) \phi(x_3, \dots,x_{n+2}))   \nonumber                                                                                                \\
		                   & - \sum_{i=2}^{n+1} (-1)^i \alpha^n(x_1) \phi(\alpha(x_2),\dots,x_i x_{i+1},\dots, \alpha(x_{n+2}))  \label{eq:dat41}                                                     \\
		                   & - (-1)^n \alpha^n(x_1)(\phi(x_2,\dots,x_{n+1}) \alpha^{n-1}(x_{n+2}))  \label{eq:dat61}                                                                                  \\
		                   & + \alpha^{n-1}(x_1 x_2) \phi(\alpha(x_3),\dots,\alpha(x_{n+2}))  \nonumber                                                                                               \\
		                   & + \sum_{i=2}^{n+1} (-1)^i \alpha^n(x_1) \phi(\alpha(x_2),\dots,x_i x_{i+1},\dots, \alpha(x_{n+2}))  \label{eq:dat42}                                                     \\
		                   & + \sum_{i=1}^{n+1} \sum_{j=1}^{i-2} (-1)^{i+j} \phi(\alpha^2(x_1),\dots, \alpha(x_j x_{j+1}),\dots, \alpha(x_i x_{i+1}), \dots, \alpha^2(x_{n+2}))  \label{eq:dat1}      \\
		                   & - \sum_{i=2}^{n+1} \phi(\alpha^2(x_1),\dots, \alpha(x_{i-1}) (x_i x_{i+1}), \dots, \alpha^2(x_{n+2}))  \label{eq:dat21}                                                  \\
		                   & + \sum_{i=1}^{n}  \phi(\alpha^2(x_1),\dots, (x_{i-1} x_i) \alpha( x_{i+1}), \dots, \alpha^2(x_{n+2}))  \label{eq:dat22}                                                  \\
		                   & + \sum_{i=1}^{n+1} \sum_{j=i+2}^{n+1} (-1)^{i+j-1} \phi(\alpha^2(x_1),\dots, \alpha(x_i x_{i+1}),\dots, \alpha(x_j x_{j+1}), \dots, \alpha^2(x_{n+2}))  \label{eq:dat11} \\
		                   & + \sum_{i=1}^{n} (-1)^{i+n+1} \phi(\alpha(x_1),\dots,x_i x_{i+1},\dots, \alpha(x_{n+1}))\alpha^n(x_{n+2}) \label{eq:dat51}                                               \\
		                   & +  \phi(\alpha(x_1),\dots, \alpha(x_{n}))\alpha^{n-1}(x_{n+1}x_{n+2})  \nonumber                                                                                         \\
		                   & + (-1)^n (\alpha^n(x_1)\phi(x_2,\dots,x_{n+1})) \alpha^{n}(x_{n+2})  \label{eq:dat62}                                                                                    \\
		                   & + \sum_{i=1}^{n} (-1)^{i+n} \phi(\alpha(x_1),\dots,x_i x_{i+1},\dots, \alpha(x_{n+1})) \alpha^n(x_{n+2})  \label{eq:dat52}                                               \\
		                   & - (\phi(x_1, \dots,x_{n}) \alpha^{n-1}(x_{n+1}))  \alpha^{n}(x_{n+2}). \nonumber                                                                                          
	\end{align}
	The terms \ref{eq:dat21} cancels with \ref{eq:dat22}, \ref{eq:dat41}  with \ref{eq:dat42}, \ref{eq:dat51}  with \ref{eq:dat52} and  \ref{eq:dat61}  with \ref{eq:dat62}. Exchanging the indices in \ref{eq:dat11}, one sees that it cancels with \ref{eq:dat1}. The remaining four terms can be easily arranged to 
	\begin{align}
		\delam \delma \phi(x_1, \dots, x_{n+2}) & =	\alpha^{n-1}(x_1 x_2)(\delma \phi)(x_3,\dots,x_{n+2})                               \\
		                                        & -  (\delma \phi) (x_1, \dots, x_n) \alpha^{n-1}(x_{n+1}x_{n+2})  \nonumber            \\
		                                        & = \alpha^{n-1}(x_1 x_2) \alpha(\phi(x_3,\dots,x_{n+2})) \label{eq:dat14}              \\
		                                        & - \alpha^{n-1}(x_1 x_2) \phi(\alpha(x_3),\dots,\alpha(x_{n+2}))\label{eq:dat12}       \\
		                                        & -  \alpha(\phi(x_1,\dots,x_{n}))\alpha^{n-1}(x_{n+1}x_{n+2}) \label{eq:dat18}         \\
		                                        & + \phi(\alpha(x_1),\dots,\alpha(x_{n})) \alpha^{n-1}(x_{n+1}x_{n+2}). \label{eq:dat16} 
	\end{align}
												
	Next we check $\delmm \delam = \delam \delaa$ explicitly. On one side we get 
	\begin{align}
		\delmm \delam \psi & (x_1, \ldots, x_{n+2})  = \alpha^n(x_1)(\delam \psi)(x_2,\ldots,x_{n+2})  \nonumber                                              \\
		                   & + \sum_{i=1}^{n+1} (-1)^i (\delam \psi)( \alpha(x_1), \dots, x_i x_{i+1},\dots ,\alpha(x_{n+2})  \nonumber                       \\
		                   & + (-1)^{n+2} (\delam \psi)(x_1, \dots,x_{n+1}) \alpha(x_{n+2}) \nonumber                                                         \\
		                   & = \alpha^n(x_1) (\alpha(^{n-2}(x_2x_3) \psi(x_4,\dots,x_{n+2}) \label{eq:das13}                                                  \\
		                   & -  \alpha^n(x_1) ( \psi(x_2,\dots,x_n) \alpha^{n-2}(x_{n+1}x^{n+2}) \label{eq:das15}                                             \\
		                   & - (\alpha^{n-2}(x_1x_2)\alpha^{n-1}(x_3)) \psi(\alpha(x_4), \dots, \alpha(x_{n+2}) \label{eq:das1}                               \\
		                   & + (\alpha^{n-1}(x_1)\alpha^{n-2}(x_2x_3)) \psi(\alpha(x_4), \dots, \alpha(x_{n+2}) \label{eq:das2}                               \\
		                   & +\sum_{i=3}^{n+1} (-1)^i \alpha^{n-1}(x_1x_2) \psi(\alpha(x_3), \dots,x_ix_{i+1},\dots, \alpha(x_{n+2})) \label{eq:das3}         \\
		                   & -  \sum_{i=1}^{n-1} (-1)^i \psi(\alpha(x_1), \dots,x_ix_{i+1},\dots, \alpha(x_{n}))\alpha^{n-1}(x_{n+1}x_{n+2})  \label{eq:das5} \\
		                   & - (-1)^n\psi(\alpha(x_1),\dots,\alpha(x_n))(\alpha^{n-2}(x_nx_{n+1})\alpha^{n-1}(x_{n+2})) \label{eq:das7}                       \\
		                   & + (-1)^n \psi(\alpha(x_1),\dots,\alpha(x_n))(\alpha^{n-1}(x_n)\alpha^{n-2}(x_{n+1}x_{n+2})) \label{eq:das8}                      \\
		                   & + (-1)^{n+2} (\alpha^{n-2}(x_1 x_2) \psi(x_3,\dots,x_{n+1})) \alpha^n(x_{n+2})\label{eq:das11}                                   \\
		                   & - (-1)^{n+2} (\psi(x_1,\dots,x_{n-1}) \alpha(x_nx_{n+1}))\alpha^n(x_{n+2}) \label{eq:das9}.                                       
	\end{align} 
	On the other side  one has 
	\begin{align}
		\delam \delaa  \psi & (x_1, \ldots, x_{n+2})=  \alpha^{n-1} (x_1x_2) (\delaa \psi)(x_3,\dots,x_{n+2}) \nonumber                              \\
		                    & - (\delaa \psi)(x_1, \dots, x_n) \alpha^{n-1}(x_{n+1}x_{n+2}) \nonumber                                                \\
		                    & =\alpha^{n-1}(x_1x_2) (\alpha^{n-1}(x_3) \psi(x_4, \dots, x_{n+2}))  \label{eq:das14}                                  \\
		                    & +\sum _{i=3}^{n+1} (-1)^i \alpha^{n-1}(x_1 x_2) \psi(\alpha(x_3,\dots,x_ix_{i+1},\dots,\alpha(x_{n+2}))\label{eq:das4} \\
		                    & + (-1)^{n+2} \alpha^{n-1}(x_1 x_2) (\psi(x_3,\dots, x_{n+1})\alpha^{n-1}(x_{n+2})) \label{eq:das12}                    \\
		                    & -  (\alpha^{n-1}(x_1) \psi(x_2,\dots,x_n)) \alpha(x_{n+1}x_{n+2})) \label{eq:das16}                                    \\
		                    & - \sum_{i=1}^{n-1} \psi(\alpha(x_1),\dots,x_i x_{i+1},\dots,\alpha(x_n)) \alpha^{n-1}(x_{n+1} x_{n+2}) \label{eq:das6} \\
		                    & - (-1)^n (\psi(x_1,\dots,x_n) \alpha^{n-1}(x_n))\alpha^{n-1}(x_{n+1}x_{n+2}). \label{eq:das10}                          
	\end{align}
	Now \eqref{eq:das1} cancels with \eqref{eq:das2} and \eqref{eq:das7} with \eqref{eq:das8}.
	Using the Hom-associativity  it is also  easy to see that \eqref{eq:das3} equals \eqref{eq:das4},
	\eqref{eq:das5} equals \eqref{eq:das6}, \eqref{eq:das9} equals \eqref{eq:das10},
	\eqref{eq:das11} equals \eqref{eq:das12}, \eqref{eq:das13} equals \eqref{eq:das14} and  \eqref{eq:das15} equals \eqref{eq:das16}.
													
	Next we verify $\delma \delmm \phi = \delaa \delma \phi$. For this we compute
	\begin{align*}
		& \delaa \delma \phi  (x_1, \dots,x_{n+1})  = \alpha^n(x_1) (\delma \phi)(x_2, \dots,x_{n+1})                                                                                                  \\
		                   & + \sum_{i=1}^n (-1)^i (\delma \phi) ( \alpha(x_1), \dots, x_i x_{i+1}, \dots , \alpha(x_n))                                                                              
		+ (-1)^{n+1} (\delma \phi) (x_1,\dots,x_n) \alpha^n(x_{n+1})                                            \\
		                   & = \alpha^n(x_1) \alpha(\phi(x_2,\dots,x_{n+1})  - \alpha^n(x_1) \phi(\alpha(x_2, \dots, \alpha(x_{n+1})                                                                  \\ 
		                   & + \sum_{i=1}^n (-1)^i \alpha(\phi(\alpha(x_1),\dots,x_i x_{i+1}, \dots, \alpha(x_{n+1}))                                                                                 \\
		                   & - \sum_{i=1}^n (-1)^i \phi(\alpha^2(x_1),\dots,\alpha(x_i x_{i+1}), \dots, \alpha^2(x_{n+1})                                                                             \\
		                   & + (-1)^{i+1} \alpha(\phi(x_1,\dots,x_n)) \alpha^n(x_{n+1})  - (-1)^{i+1} \phi(\alpha(x_1),\dots,\alpha(x_n)) \alpha^n(x_{n+1}) 
	\end{align*}  
	and 
	\begin{align*}
		\delma \delmm \phi & (x_1,\dots, x_{n+1})  = \alpha((\delmm \phi)(x_1, \dots, x_{n+1}) - (\delmm \phi)(\alpha(x_1),\dots,\alpha(x_{n+1})) \\
		& =\alpha(\alpha^{n-1}(x_1) \phi(x_2,\dots,x_{n+1}))                                                                   
		+ \sum_{i=1}^n (-1)^i \alpha(\phi(\alpha(x_1),\dots,x_i x_{i+1}, \dots, \alpha(x_{n+1}))              \\
		& + (-1)^{i+1} \alpha(\phi(x_1,\dots,x_n) \alpha^{n-1}(x_{n+1}))                             - \alpha^n(x_1) \phi(\alpha(x_2, \dots, \alpha(x_{n+1})         \\ 
		  & - \sum_{i=1}^n (-1)^i \phi(\alpha^2(x_1),\dots,\alpha(x_i)\alpha( x_{i+1}), \dots, \alpha^2(x_{n+1})) \\ & - (-1)^{i+1} \phi(\alpha(x_1),\dots,\alpha(x_n)) \alpha^n(x_{n+1}) .     
	\end{align*}
	Using the multiplicativity it is easily seen that the two sides agree.
	This completes the proof.
												
\end{proof}

\begin{defn}
	We denote by $\aHB^\bullet(A)=  \del \aHC^{\bullet-1}(A)$ the coboundaries and by $\aHZ(A)= \{ (\phi,\psi) \in \aHC(A) | \del(\phi,\psi) =0  \}$ the cocycles.
	The cohomology of $(\aHC(A),\del)$ is  $ \factor{\aHC(A)}{\aHB(A)}$ and we call it $\alpha$-type Hochschild cohomology of $A$ with value in itself and denote it by $\aHH(A)$.
\end{defn}

Similarly one can define the cohomology $\aHH(A,M)$ with values in an $A$-bimodule $(M,\alpha_M)$. We denote the left and right action by $\cdot$. 
In this case we have:
\begin{equation}
	\aHC^n(A,M) = \aHC^n_\mu(A,M) \oplus \aHC^n_\alpha(A,M) = \Hom(A^{\otimes n}, M) \oplus \Hom(A^{\otimes (n-1)},M)
\end{equation}
for $n \geq 2$, $\aHC^1(A,M)= \Hom(A,M)$, $\aHC^n(A,M) = 0$ for $n \leq 0$  and differentials 
\begin{align}
	\begin{split}
	\delmm \phi (x_1, \ldots, x_{n+1}) & =\alpha^{n-1}(x_1) \cdot \phi(x_2, \ldots, x_{n+1})                                                                       \\
	                                   & + \sum_{i=1}^{n} (-1)^{i} \phi(\alpha(x_1), \dots, x_i x_{i+1}, \dots , \alpha(x_{n+1}))                                  \\
	                                   & + (-1)^{n+1} \phi(x_1, \ldots, x_{n-1}) \cdot\alpha^{n-1}(x_{n+1})                                                        
	\end{split}, \\
	\begin{split}
	\delaa \psi (x_1,\ldots, x_n)      & = \alpha^{n-1}(x_1) \cdot \psi(x_2, \ldots, x_n)                                                                          \\
	                                   & + \sum_{i=1}^{n-1} (-1)^{i} \psi(\alpha(x_1), \dots, x_i x_{i+1}, \dots , \alpha(x_n))                                    \\
	                                   & + (-1)^{n} \psi(x_1, \ldots, x_{n-1})\cdot \alpha^{n-1}(x_n),                                                             
	\end{split} \\
	\delma \phi (x_1,\ldots, x_n)      & = \alpha_M( \phi(x_1,\ldots, x_n) ) - \phi(\alpha(x_1),\ldots,\alpha( x_n)),                                              \\
	\delam \psi (x_1,\ldots, x_{n+1})  & = \alpha^{n-2}(x_1 x_2) \cdot  \psi(x_3, \ldots, x_{n+1})  - \psi(x_1, \ldots, x_{n-1})\cdot \alpha^{n-2}(x_{n} x_{n+1}). 
\end{align}
	
With this we set
\begin{align}
	\del (\phi + \psi) & = (\delmm + \delma) \phi - (\delam + \delaa )\psi          \\
	                   & = ( \delmm \phi - \delam \psi, \delma \phi - \delaa \psi). 
\end{align}
	
The proof for $\del^2 = 0$ is completely analog to the proof of  \Cref{th:delalg}.

\subsection{Relationships with  other Hochschild-type cohomologies}

We  establish the connection with the cohomology given in \citep{homcoho}. For this we consider only elements where the summand in $\aHC_\alpha$ is zero, this means pairs of the form $(\phi,0)$.  
The condition $\del (\phi,0) =0$, corresponds to $\delmm \phi =0$ and $\delma \phi =0$, since the other two parts vanish. 
Since $\delma \phi \in \aHC(A)_\alpha$ and we want this part to be zero, we consider only the subcomplex where $\delma$ vanishes, this is $\HC^n_\alpha(A) = \{\phi \in \aHC^n_\mu(A) | \alpha \phi = \phi \alpha^{\otimes n} \}$. The remaining map $\delmm$ is a differential on this. In fact we have 
	
\begin{prop}
	The cohomology of $\HC^\bullet_\alpha(A)$ with differential $\delmm$ is the one given in \citep{homcoho}.
\end{prop}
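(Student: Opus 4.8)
The plan is to recognize that $(\HC^\bullet_\alpha(A),\delmm)$ is, term by term, the very complex whose cohomology is computed in \citep{homcoho}, so that the statement reduces to matching definitions together with one well-definedness check. First I would recall the complex of \citep{homcoho}: its $n$-cochains are the linear maps $f\colon A^{\otimes n}\to A$ commuting with the structure map, i.e.\ satisfying $\alpha\circ f = f\circ\alpha^{\otimes n}$, and its differential is given by exactly the formula defining $\delmm$ (after the trivial reindexing from starting the arguments at $0$ to starting at $1$). By the very definition $\HC^n_\alpha(A)=\{\phi\in\aHC^n_\mu(A)\mid \alpha\phi=\phi\alpha^{\otimes n}\}$, so the underlying graded spaces of the two complexes coincide and the two differentials are given by the same formula. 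Hence it remains only to verify that $\delmm$ really endows $\HC^\bullet_\alpha(A)$ with the structure of a cochain complex.

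For this I would exploit the commutation relations already established in the proof of \Cref{th:delalg}, rather than recomputing anything. The key observation is that the defining condition of $\HC^n_\alpha(A)$ is precisely $\delma\phi=0$, since $\delma\phi=\alpha\phi-\phi\alpha^{\otimes n}$; thus $\HC^n_\alpha(A)=\ker(\delma\colon\aHC^n_\mu\to\aHC^{n+1}_\alpha)$. Two facts then follow at once. First, $\delmm$ preserves this subspace: from the identity $\delma\delmm=\delaa\delma$ verified in \Cref{th:delalg} we obtain that $\delma\phi=0$ implies $\delma(\delmm\phi)=\delaa(\delma\phi)=0$, so $\delmm\phi\in\HC^{n+1}_\alpha(A)$. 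Second, $\delmm$ squares to zero on the subcomplex: from the identity $\delmm\delmm=\delam\delma$, again from \Cref{th:delalg}, we get $\delmm^2\phi=\delam(\delma\phi)=0$ whenever $\delma\phi=0$.

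Together these show that $(\HC^\bullet_\alpha(A),\delmm)$ is a genuine cochain complex with the same cochains and the same differential as the complex of \citep{homcoho}, whence the two cohomologies coincide. I do not expect a serious obstacle here, since there is no real computation to perform: everything is inherited from the commutation identities of \Cref{th:delalg}. The only conceptually noteworthy point, and the thing a reader might initially worry about, is that $\delmm$ is \emph{not} a differential on the whole of $\aHC^\bullet_\mu(A)$; indeed $\delmm^2=\delam\delma$ need not vanish there. It becomes a differential precisely after restricting to the locus $\delma\phi=0$ of cochains commuting with $\alpha$, and this is exactly the domain on which \citep{homcoho} defines its cohomology. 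So the content of the proposition is really the identification of that locus with $\HC^\bullet_\alpha(A)$ together with the transport of $\delmm^2=0$ across it.
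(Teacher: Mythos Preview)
Your proposal is correct and follows the same approach as the paper, which simply observes that the complexes and differentials agree by definition. Your version is in fact more thorough: you explicitly verify that $\delmm$ preserves the subspace $\HC^\bullet_\alpha(A)$ and squares to zero there using the commutation identities $\delma\delmm=\delaa\delma$ and $\delmm\delmm=\delam\delma$ from \Cref{th:delalg}, whereas the paper leaves this implicit.
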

\begin{proof}
	This follows directly since the complexes and the differentials agree by definition.
\end{proof}

As stated above in the associative case the cohomology is completely determined by the Hochschild cohomology. In fact we have the following
	
\begin{theorem}\label{th:cohoass}
	Let $A$ be an associative algebra, then $\aHH^k(A) \cong \HH^k(A) \oplus \HH^{k-1}(A)$ for $k \geq 1$  and $\HH(A)^1$ set to $\Der(A)$.
\end{theorem}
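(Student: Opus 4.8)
The plan is to exploit the fact that when $\alpha=\id$ the four maps degenerate drastically, turning $(\aHC^\bullet(A),\del)$ into a short exact sequence built from two shifted copies of the ordinary Hochschild complex, and then to show that the resulting long exact sequence splits degreewise.

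First I would specialize the differentials. With $\alpha=\id$ one has $\delma\phi=\phi-\phi=0$, while $\delmm$ is the ordinary Hochschild coboundary on $\aHC^\bullet_\mu=\Hom(A^{\otimes\bullet},A)$ and $\delaa$ is the ordinary Hochschild coboundary on $\aHC^\bullet_\alpha=\Hom(A^{\otimes(\bullet-1)},A)$. Thus $\del(\phi,\psi)=(\delmm\phi-\delam\psi,\,-\delaa\psi)$ is \emph{upper triangular} in the splitting $\aHC^n=\aHC^n_\mu\oplus\aHC^n_\alpha$, so that $0\to(\aHC^\bullet_\mu,\delmm)\xrightarrow{\phi\mapsto(\phi,0)}(\aHC^\bullet,\del)\xrightarrow{(\phi,\psi)\mapsto\psi}(\aHC^\bullet_\alpha,-\delaa)\to 0$ is a short exact sequence of complexes. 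Because both summands start in degree $1$ (there is no $\Hom(\K,A)$ term, by the truncation remark), their cohomologies are $H^n(\aHC^\bullet_\mu)=\HH^n(A)$ and $H^n(\aHC^\bullet_\alpha)=\HH^{n-1}(A)$, where in each case $\HH^1$ must be read as $\Der(A)$ and $\HH^0$ as $0$ — precisely the effect in degree $1$ of having dropped the $\Hom(\K,A)=A$ term from the Hochschild complex.

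The heart of the matter is the connecting homomorphism $\partial$ of the induced long exact sequence. Unwinding the snake lemma, a class $[\psi]\in H^n(\aHC^\bullet_\alpha)$ lifts to $(0,\psi)$, and $\del(0,\psi)=(-\delam\psi,0)$ since $\delaa\psi=0$, so $\partial[\psi]=-[\delam\psi]\in\HH^{n+1}(A)$. Here I would observe that, writing $\mu(x,y)=xy$ as the degree-$2$ Hochschild cochain, $\delam\psi=\mu\smile\psi-\psi\smile\mu$, the graded commutator of $\mu$ and $\psi$ for the cup product (the sign $(-1)^{\lvert\mu\rvert\lvert\psi\rvert}$ is $+1$ because $\lvert\mu\rvert=2$ is even). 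Since $A$ is associative, $\mu$ is a Hochschild cocycle, and $\psi$ is a cocycle by $\delaa\psi=0$; graded-commutativity of the cup product on $\HH^\bullet(A)$ then forces $[\delam\psi]=[\mu]\smile[\psi]-[\psi]\smile[\mu]=0$. Hence $\delam\psi$ is a Hochschild coboundary — concretely $\delam\psi=\pm\,\delmm(\mu\mathbin{\bar\circ}\psi)$ for the Gerstenhaber circle product $\mu\mathbin{\bar\circ}\psi$, which one can also verify by a direct computation — and therefore $\partial\equiv 0$.

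I expect this vanishing of $\partial$ to be the main obstacle: the rest is bookkeeping, but showing that $\delam\psi$ is exact is exactly the point where the classical Gerstenhaber structure of the associative algebra enters and where the deformation of $\alpha$ decouples from that of $\mu$. With $\partial=0$ the long exact sequence breaks into short exact sequences $0\to\HH^n(A)\to\aHH^n(A)\to\HH^{n-1}(A)\to 0$, which split since we work over a field $\K$. This yields $\aHH^k(A)\cong\HH^k(A)\oplus\HH^{k-1}(A)$ for $k\geq 1$, with $\HH^1(A)$ interpreted as $\Der(A)$, as claimed.
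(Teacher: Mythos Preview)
Your argument is correct. Both approaches ultimately rest on the same explicit bounding cochain: your $\mu\mathbin{\bar\circ}\psi$ is, up to sign, precisely the paper's $\tilde\phi(x_1,\dots,x_n)=x_1\psi(x_2,\dots,x_n)+(-1)^n\psi(x_1,\dots,x_{n-1})x_n$, and the identity $\delmm\tilde\phi=\delam\psi$ is the key computation in each case. Where you differ from the paper is in the packaging. The paper argues directly on cocycle representatives: given a closed pair $(\phi,\psi)$, it adjusts $\psi$ within its Hochschild class, writes down $\tilde\phi$, and observes that $(\phi,\psi)$ decomposes as $(\tilde\phi,\psi)+(\phi-\tilde\phi,0)$ with $\phi-\tilde\phi$ a Hochschild cocycle. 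Your route instead recognises the upper-triangular shape of $\del$ as a short exact sequence of complexes, identifies the connecting homomorphism as $[\psi]\mapsto -[\delam\psi]$, and then gives a conceptual reason for its vanishing: $\delam\psi=\mu\smile\psi-\psi\smile\mu$ is a graded cup-commutator of cocycles, hence exact by Gerstenhaber's graded commutativity of $\smile$ on $\HH^\bullet(A)$. This buys you two things the paper's proof leaves implicit: the long exact sequence handles both surjectivity and injectivity of the map $\HH^k\oplus\HH^{k-1}\to\aHH^k$ in one stroke, and the cup-product interpretation explains \emph{why} the explicit formula for $\tilde\phi$ works rather than presenting it as an inspired guess. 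The cost is that you invoke slightly heavier machinery (the Gerstenhaber algebra structure), whereas the paper's bare-hands argument requires nothing beyond the definitions.
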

\begin{proof}
	Since $\alpha = \id$, we have $\delma=0$, $\delaa^{k+1} = \delmm^k$ and $\delmm$ agrees with the Hochschild differential. We prove the statement by induction over $k$.  For $k=1$ the statement is clear.
	For $(\phi,\psi) \in \aHC(A)$ to be closed, we must have $\delaa \psi =0$, so it is a cocycle in the ordinary Hochschild cohomology $\HC^{k-1}(A)$ and so cohomologous to an element in $\HH(A)$. W.l.o.g. we can assume $\psi$ is this element. 
	Then $\tilde{\phi} \in \aHC_\mu^k(A)$ defined by $\tilde\phi(x_1,\dots,x_n) = x_1 \psi(x_2,\dots,x_n) + (-1)^n \psi(x_1,\dots,x_{n-1})x_n$ satisfies $\delmm \tilde{\phi} = \delam \psi$, so $(\tilde{ \phi} ,\psi)$ is closed. Furthermore  $\delmm(\tilde{\phi} - \phi) =0$, so it is cohomologous to an element in  $\HH^k(A)$. 
\end{proof}

Now, we consider the case where $\alpha$ is invertible, in this case the construction in the previous proof can be generalized.
For a linear map  $\psi: A^{\otimes (n-1)} \to  A$, we  define  $\phi_\psi(x_1, \dots, x_n) = \alpha^{n-2}(x_1) \alpha^{-1}(\psi(x_2, \dots, x_n) + (-1)^n \alpha^{-1}\psi(x_1, \dots ,x_{n-1}) \alpha^{n-2}(x_n)$.
	
\begin{prop}
	Let $\psi: A^{\otimes (n-1)} \to  A$ be a linear map, which satisfies $\delaa \psi  = \delma \phi_\psi$. Then $(\phi_\psi, \psi)$ is a $n$-cocycle of $A$.
\end{prop}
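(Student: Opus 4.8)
The plan is to verify directly that $\del(\phi_\psi,\psi)=0$. By definition
\[
\del(\phi_\psi,\psi)=\bigl(\delmm\phi_\psi-\delam\psi,\ \delma\phi_\psi-\delaa\psi\bigr),
\]
so the $\alpha$-component is exactly $\delma\phi_\psi-\delaa\psi$, which vanishes by hypothesis. Hence the whole content of the proposition is the single $\mu$-component identity
\[
\delmm\phi_\psi=\delam\psi .
\]
This is the invertible-$\alpha$ analogue of the computation $\delmm\tilde\phi=\delam\psi$ used in the proof of \Cref{th:cohoass}: for $\alpha=\id$ the map $\phi_\psi$ specialises to the $\tilde\phi$ appearing there, and the powers of $\alpha$ and $\alpha^{-1}$ built into $\phi_\psi$ are precisely what is needed to keep that argument running when $\alpha\neq\id$.

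To prove $\delmm\phi_\psi=\delam\psi$ I would expand $\delmm\phi_\psi(x_1,\dots,x_{n+1})$ from the definition of $\delmm$ and substitute the two-term formula for $\phi_\psi$, so that each summand of $\delmm$ splits into a contribution from the ``left'' summand $\alpha^{n-2}(y_1)\alpha^{-1}\psi(y_2,\dots,y_n)$ of $\phi_\psi$ and one from the ``right'' summand. Applying Hom-associativity in the form $\alpha(u)(vw)=(uv)\alpha(w)$, together with multiplicativity $\alpha^{k}(a)\alpha^{k}(b)=\alpha^{k}(ab)$ and $\alpha\alpha^{-1}=\id$, turns the two extreme summands into the two terms $\alpha^{n-2}(x_1x_2)\psi(x_3,\dots,x_{n+1})$ and $-\psi(x_1,\dots,x_{n-1})\alpha^{n-2}(x_nx_{n+1})$ of $\delam\psi$. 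All the intermediate summands either cancel in pairs outright or, after the same rewriting, reduce to terms in which $\psi$ is applied to a contracted product $x_ix_{i+1}$.

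The role of the hypothesis is precisely to dispose of these last terms. The identity $\delaa\psi=\delma\phi_\psi$ is the condition that expresses each occurrence of $\psi$ on a product through the individual arguments (for $n=2$ it is literally the statement that $\psi$ is a conjugate $\alpha$-derivation, cf.\ the definition above); feeding it back in makes the remaining terms telescope to zero. Assembling the surviving terms then yields exactly $\delam\psi$, which together with the hypothesis gives $\del(\phi_\psi,\psi)=0$ and completes the proof.

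I expect the main obstacle to be organisational rather than conceptual: the computation involves a large number of summands, and the bookkeeping of signs and of the exponents of $\alpha$ and $\alpha^{-1}$ under the repeated applications of Hom-associativity and multiplicativity must be carried out with care. No input beyond Hom-associativity, multiplicativity, invertibility of $\alpha$, and the hypothesis is needed, and the identity genuinely fails without the hypothesis (for $\alpha=\id$ it reduces to the requirement that $\psi$ be a Hochschild cocycle), which pinpoints exactly where $\delaa\psi=\delma\phi_\psi$ enters.
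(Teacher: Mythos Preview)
Your outline is correct and follows essentially the same route as the paper. The paper organises the computation by first isolating the ``left half'' $\xi(x_1,\dots,x_n)=\alpha^{n-2}(x_1)\,\alpha^{-1}\psi(x_2,\dots,x_n)$, computing $\delmm\xi$ explicitly, and then reading off the clean identity
\[
\delmm\phi_\psi=\delam\psi+\alpha^{n-1}(x_1)\,\alpha^{-1}\bigl((\delma\phi_\psi-\delaa\psi)(x_2,\dots,x_{n+1})\bigr)+\cdots,
\]
so that the hypothesis kills the correction term in one stroke; you propose to treat both halves of $\phi_\psi$ at once and invoke the hypothesis on the residual ``$\psi$ on a product'' terms, which amounts to the same thing.
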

\begin{proof}
	We consider $\xi(x_1, \dots, x_n) = \alpha^{n-2}(x_1) \alpha^{-1}(\psi(x_2, \dots, x_n)) $ and calculate $\delmm \xi$.
	\begin{align*}
		(\delmm& \xi)(x_1, \dots, x_{n+1} )  = \alpha^{n-1}(x_1) \big( \alpha^{n-2}(x_2) \alpha^{-1} \psi(x_3, \dots, x_{n+1}) \big) \\
		  & + \alpha^{n-2}(x_1 x_2) \alpha^{-1} \psi(\alpha(x_3), \dots, \alpha( x_{n+1})) \\ &+
		\sum_{i=2}^n (-1)^i \alpha^{n-1} (x_1) \alpha^{-1} \psi( \alpha(x_2), \dots, x_i x_{i+1}, \dots , \alpha(x_{n+1}))  \\
		& + (-1)^{n+1} \left( \alpha^{n-2} \alpha^{-1}\psi(x_2, \dots, x_{n}) \right) \alpha^{n-1} (x_{n+1}) \\
		&= (\alpha^{n-2}(x_1 ) \alpha^{n-2}) \psi(x_3, \dots, x_{n+1}) 
		- \alpha^{n-1} (x_1) \left( \alpha^{n-2}(x_2) \alpha^{-2}\psi(\alpha(x_3),\dots,\alpha(x_{n+1})\right)  \\
		&+ \alpha^{n-1}(x_1) \alpha^{-1}( \alpha^{n-1}(x_2) \psi(x_3,\dots,x_{n+1})) 
		- \alpha^{n-1}(x_1) \alpha^{-1}( \alpha^{n-1}(x_2) \psi(x_3,\dots,x_{n+1})) \\
		& +\sum_{i=2}^n (-1)^i \alpha^{n-1} (x_1) \alpha^{-1} \psi( \alpha(x_2), \dots, x_i x_{i+1}, \dots , \alpha(x_{n+1}))    \\ 
		&+ \alpha^{n-1}(x_1) \alpha^{-1} \left( \psi(x_1, \dots, x_{n}) \alpha^{n-1} (x_n)\right) \\
		& = \alpha^{n-2}(x_1 x_2) \psi(x_3, \dots, x_{n+1})   \\
		&+ \alpha^{n-1}(x_1) \alpha^{-1} \left( (\delma \xi)(x_2, \dots, x_{n+1}) 
		- (\delaa \psi)(x_2, \dots, x_{n+1})\right)
	\end{align*}
	Using this it is easy to see that
	\begin{align*}
		\delmm \phi_\psi(x_1,\dots,x_{n+1}) & = \delam \psi(x_1,\dots,x_{n+1}) + \alpha^{n-1}(x_1) \alpha^{-1} \big( (\delma \phi_\psi)(x_2, \dots, x_{n+1}) \\ & - (\delaa \psi)(x_2, \dots, x_{n+1})\big).	 
	\end{align*} 
	Since we assumed $\delaa \psi =\delma \phi_\psi$, then  we get the result.
\end{proof}
	
Note that the condition in the previous theorem can be written explicitly  as 
\begin{equation}
	\begin{split}
		\alpha^{n-1}(x_1) \alpha^{-1}\psi(\alpha(x_2),\dots, \alpha(x_n)) + \sum_{i=1}^{n-1}(-1)^{i} \psi(\alpha(x_1), \dots, x_i x_{i+1}, \dots, \alpha(x_n))\\  + (-1)^{n} \alpha^{-1} \psi(\alpha(x_1), \dots, \alpha(x_{n-1}) \alpha^{n-1}(x_n) = 0.
	\end{split}
\end{equation}
If $\psi$ commutes with $\alpha$, this reduces to $\delaa \psi =0$. For $n=2$ the condition  means that $\psi$ is a conjugate $\alpha$-derivation.
	
Note that in the  associative case, i.e. $\alpha = \id$, every cocycle is cohomologous to a sum of two cocycles, where ones has the form $(\phi , 0)$ and the other one the form $ (\phi_\psi, \phi)$. It would be interesting to know whether this is always the case if $\alpha$ is invertible, see also \cref{sc:yautwist}.
	

\subsection{$L_\infty$-structure} \label{Section3.2}

It would be nice to have an $L_\infty$-structure on the complex $\aHC(A)$, for a vector space $A$, such that the Maurer-Cartan elements are precisely the Hom-associative algebra, and the differential is given as usual in this context. It is clear that on elements of the form $(\phi,0)$ this should reduce to the Gerstenhaber structure given in \citep{homcoho}. It is also clear that since the equations for the Hom-associativity and mutliplicativity are not binary that  it cannot be an ordinary Lie algebra but must be a true $L_\infty$-structure.  Unfortunately we do not know such a structure, but one can try to construct it degree by degree. Using a computer, we were able to  do this for the low degrees, up to degree 5. Since the terms become quite long we only give the terms needed to do deformation theory here. But we first state the following conjecture:
	
\begin{conjecture}
	There is an $L_\infty$-algebra structure on the complex $\aHC(A)$, such that the Maurer-Cartan elements are precisely multiplicative Hom-associative algebras on $A$, and the differential defined above is induced from it.
\end{conjecture}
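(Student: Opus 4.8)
The plan is to realise the pair $(\mu,\alpha)$ as a Maurer--Cartan element of the sought $L_\infty$-algebra and to read off the brackets from the two requirements in the statement. After shifting the grading by one, a structure element sits in degree $2$: writing $\pi=(\mu,\alpha)$ with $\mu\in\aHC^2_\mu=\Hom(A^{\otimes 2},A)$ and $\alpha\in\aHC^2_\alpha=\Hom(A,A)$, the Maurer--Cartan equation $\sum_{n\geq 1}\frac{1}{n!}\,l_n(\pi,\dots,\pi)=0$ must be equivalent to the conjunction of Hom-associativity and multiplicativity, while the twisted differential
\[
\del^\pi(-)=\sum_{n\geq 1}\frac{1}{(n-1)!}\,l_n(\pi,\dots,\pi,-)
\]
must reproduce $\del=(\delmm+\delma)-(\delam+\delaa)$. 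Since on a bare vector space there is no canonical differential, one expects $l_1=0$, so that both the defining equations and $\del$ are generated purely by twisting.

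The first thing I would extract is why the brackets cannot terminate. The components $\delmm$ and $\delaa$ contain the operators $\alpha^{n-1}$ acting on an $n$-cochain, i.e.\ $n-1$ iterated applications of the structure map. In the twist $\del^\pi$ each such $\alpha$ can only enter through the $\aHC_\alpha$-component of a copy of $\pi$, so producing $\alpha^{n-1}$ forces the use of $l_n$ evaluated on $n-1$ copies of $\pi$. Hence $l_n\neq 0$ for all $n$, the $L_\infty$-structure is genuinely of infinite type, and this already signals why a finite machine verification (here up to degree $5$) cannot settle the statement: there are infinitely many brackets to specify and infinitely many $L_\infty$-relations to check.

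I see two routes. The structural route is to present Hom-associative algebras as algebras over the operad $\mathcal P$ generated by a binary $\mu$ and a unary $\alpha$ modulo $\alpha\mu-\mu(\alpha\otimes\alpha)$ (multiplicativity) and $\mu(\alpha\otimes\mu)-\mu(\mu\otimes\alpha)$ (Hom-associativity). The multiplicativity relation mixes words of length two and three in the generators, so $\mathcal P$ is inhomogeneous (non-quadratic) and the naive Gerstenhaber bracket on the $\mu$-part from \citep{homcoho} does not close; this is the operadic source of the higher $l_n$. One then chooses a quasi-free (cofibrant) resolution $\mathcal P_\infty\to\mathcal P$ and applies the general principle that the convolution complex $\Hom(\text{co(operad)},\End_A)$ of such a resolution carries an $L_\infty$-structure whose Maurer--Cartan elements are exactly $\mathcal P$-algebra structures and whose twist by such an element is the deformation differential (Merkulov--Vallette-type deformation theory of properads). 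The work is then to compute the resolution explicitly enough to identify its convolution complex with $\aHC(A)$, matching the $\mu/\alpha$ bigrading, and to check that the induced twisted differential is precisely the $\del$ above.

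The concrete route --- the one pursued numerically --- is to build the $l_n$ inductively: fix $l_2$ on the $\mu$-part to be the Gerstenhaber bracket, determine the remaining components of $l_2$, then $l_3$, and so on, at each stage matching the homogeneous pieces of the Maurer--Cartan equation to Hom-associativity and multiplicativity and the pieces of $\del^\pi$ to $\delmm,\delaa,\delma,\delam$, while the relations $\sum_{i+j=n+1}\pm\,l_j(l_i(\cdots),\cdots)=0$ constrain $l_n$ in terms of the lower brackets. The main obstacle, and the reason this is only a conjecture, is the passage from finitely many verified degrees to all degrees: one must produce closed formulas for the full family $(l_n)_{n\geq 2}$ carrying the correct powers of $\alpha$, and prove that the obstruction to extending the structure --- a class built from the lower $l_i$ and living in the relevant bigraded cohomology --- vanishes at every order; equivalently, on the structural route, one must exhibit the resolution of the inhomogeneous, non-Koszul operad $\mathcal P$ in closed form. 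I expect this obstruction-vanishing step, rather than the set-up, to be where the difficulty concentrates.
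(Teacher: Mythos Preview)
The statement is a \emph{conjecture} in the paper, not a theorem; the paper does not prove it. What the paper actually does is essentially your ``concrete route'' together with a partial version of your ``structural route'': it constructs the brackets degree by degree (verified by computer up to degree~$5$), and then packages the low-degree brackets via a graph complex built on the free operad generated by two families of corollas (one for the $\mu$-part, one for the $\alpha$-part), with a hand-specified differential on the generators of arity at most~$4$ that is checked to square to zero in those degrees. A pairing of graphs with symmetric tensors of cochains produces the $l_n$, and a short proposition confirms that Maurer--Cartan elements are Hom-associative structures and that the twisted differential agrees with $\del$ \emph{in degrees two and three only}. No argument is given that the graph-complex differential extends to all arities with $\del^2=0$; this is precisely the step you single out as the obstruction.

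Your diagnosis of why the brackets cannot terminate is correct and matches the paper's remark that the defining relations are not binary, so the structure cannot be an ordinary graded Lie algebra. Your two routes are both sensible, and the paper's graph complex is exactly an attempt at your structural route (a would-be quasi-free resolution of the Hom-associative operad, in the spirit of the references the paper cites). But your write-up, like the paper's discussion, is a research plan rather than a proof: the decisive step --- either exhibiting the full resolution of this inhomogeneous operad in closed form, or proving inductively that the obstructions to extending the $l_n$ vanish at every order --- is left open in both. You have correctly located where the difficulty concentrates; you have not resolved it, and neither has the paper.
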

	
We give the brackets with values in degree up to 2, where the degree here is shifted such that $\deg \aHC^n = n-2$, so the multiplication and structure map are of degree zero.  With $\phi_i \in \aHC_\mu^i, \psi_i \in \aHC_\alpha^i, \alpha_i \in \aHC_\alpha^2$ and $\mu_i \in \aHC_\mu^2$ we have:
\begin{align*}
	\deg 1: \\
	[\mu_1 ,\mu_2, \alpha]_\mu                 & = \mu_1 (\mu_2 \otimes \alpha) -\mu_1 ( \alpha \otimes \mu_2)                                                                                                      
	+ \mu_2 (\mu_1 \otimes \alpha) -\mu_2 ( \alpha \otimes \mu_1) \\
	[\mu,\alpha]_\alpha                        & = \alpha \mu                                                                                                                                                       \\
	[\mu, \alpha_1,\alpha_2]_\alpha            & =  - \mu( \alpha_1 \otimes \alpha_2 + \alpha_2 \otimes \alpha_1)                                                                                                   \\
	\deg 2: \\
	[\phi_3, \mu,\alpha_1,\alpha_2]_\mu        & =  \phi_3 (\mu \otimes \alpha_1 \otimes \alpha_2) - \phi_3 (\alpha_1 \otimes\mu \otimes  \alpha_2) +\phi_3 (\alpha_1  \otimes  \alpha_2\otimes \mu )               \\
	                                           & + \phi_3 (\mu \otimes \alpha_2 \otimes \alpha_1) - \phi_3 (\alpha_2 \otimes\mu \otimes  \alpha_1) +\phi_3 (\alpha_2  \otimes  \alpha_1\otimes \mu )                \\
	                                           & - \mu( \alpha_1 \alpha_2 \otimes \phi_3)  - \mu(\phi_3 \otimes \alpha_1 \alpha_2 )                                                                                 
	- \mu( \alpha_2 \alpha_1 \otimes \phi_3)  - \mu(\phi_3 \otimes \alpha_2 \alpha_1 ) \\
	[\psi_2,\mu_1,\mu_2,\alpha_1,\alpha_2]     & =  \sum_{\sigma, \tau \in S_2} \mu_{\sigma1}(\psi_2 \otimes \mu_{\sigma 2}(\alpha_{\tau 1} \otimes \alpha_{\tau 2}))                                               
	-  \mu_{\sigma 1}( \mu_{\sigma 2}(\alpha_{\tau 1} \otimes \alpha_{\tau 2}) \otimes \psi_2)  \\
	[\phi_2,\alpha]_\alpha                     & = \alpha \phi_3                                                                                                                                                    \\
	[\phi_2,\alpha_1,\alpha_2,\alpha_3]_\alpha & =  - \sum_{\sigma \in S_3}  \phi_3 (\alpha_{\sigma(1)} \otimes \alpha_{\sigma(2)} \otimes \alpha_{\sigma(3)} )                                                     \\
	[\psi_2,\mu,\alpha]_\alpha                 & = \psi_2 ( \alpha \otimes \mu) - \psi_2 ( \mu \otimes \alpha)                                                                                                      \\
	[\psi_2,\mu,\alpha_1,\alpha_2]_\alpha      & = \mu ( \alpha_1 \alpha_2 \otimes \psi_2) +\mu ( \alpha_2 \alpha_1 \otimes \psi_2) - \mu(\psi_2 \otimes \alpha_1 \alpha_2) - \mu(\psi_2 \otimes \alpha_2 \alpha_1) 
\end{align*}
	
It is well known that a $L_\infty$-algebra on a graded vector  space $V$ can be given by a coderivation $\bar l$ on the graded symmetric algebra $S (V[1])$, which squares to zero.  The derivation $\bar l$ is completely determined by its corestriction to $V$, which we will denote by $l: S(V[1]) \to V[1]$. In our case we have $V= \aHC(A)$, and $l$ is given by the brackets defined above.
	
To prove that this is in fact a $L_\infty$-structure, it is easiest to consider a graph complex. This is based on the approach for example in \citep{MR2812919} for defining $L_\infty$ structure governing deformations if one knows a model  for the corresponding operad. We refer to previous reference for details. Another way to see this is using the fact that a codifferential on the cofree conilpotent cooperad gives a $L_\infty$-structures as we want it here, see \citep[Section 10.5]{MR2954392}. To use this one has to use the weight graded dual of the free operad  given here, i.e. graded by the numbers of generated.  
We only give the general ideas here since, we only need it as a tool to motivate the brackets defined above and to show that they form in fact an $L_\infty$-structure, which one could also do by hand.
	
The graph complex consists of planar rooted trees formed by vertices \verm2, \verm3, \verm4, \vera1 \vera2 \vera3. This means, it corresponds to  the free operad generated by these operations. We consider the graphs to be graded, such that $\deg(\vermk) = k-2$ and $\deg(\verak) = k-1$. This means in particular that \verm2 and \vera1, which correspond to $\alpha$ and $\mu$, are of degree 0.
The differential is given by $ \del \verm2 = \del \vera1 = 0$, 
\begin{align*}
	\del \verm3 & = 
	\begin{tikzpicture}[scale=0.3,point/.style={draw,shape=circle,fill=blue,minimum size=2,inner sep=0}, left/.style={draw,regular polygon, regular polygon sides=3, rotate=90,minimum size=5,inner sep=0},
	right/.style={draw,regular polygon, regular polygon sides=3, rotate=-90,minimum size=5,inner sep=0},
	circ/.style={draw,shape=circle,minimum size=5,inner sep=0}]
	\node [point] (a0) at (2.25,2) {};
	\node [point] (a1) at (1.5,1) {};
	\draw (a0) -- +(0,0.7) ;
	\draw (1,0) -- (a1);
	\draw (2,0) -- (a1);
	\draw  (a1) -- (a0); 
	\node [circ] (a2) at (3,1) {};
	\draw (3,0) -- (a2);
	\draw  (a2) -- (a0); 
	\end{tikzpicture}
	-\begin{tikzpicture}[scale=0.3,point/.style={draw,shape=circle,fill=blue,minimum size=2,inner sep=0}, left/.style={draw,regular polygon, regular polygon sides=3, rotate=90,minimum size=5,inner sep=0},
	right/.style={draw,regular polygon, regular polygon sides=3, rotate=-90,minimum size=5,inner sep=0},
	circ/.style={draw,shape=circle,minimum size=5,inner sep=0}]
	\node [point] (a0) at (1.75,2) {};
	\node [circ] (a1) at (1,1) {};
	\draw (a0) -- +(0,0.7) ; 	\draw (1,0) -- (a1);
	\draw  (a1) -- (a0); 
	\node [point] (a2) at (2.5,1) {};
	\draw (2,0) -- (a2);
	\draw (3,0) -- (a2);
	\draw  (a2) -- (a0); 
	\end{tikzpicture}, \;
	\del \vera2 =
	\begin{tikzpicture}[scale=0.3,point/.style={draw,shape=circle,fill=blue,minimum size=2,inner sep=0}, left/.style={draw,regular polygon, regular polygon sides=3, rotate=90,minimum size=5,inner sep=0},
	right/.style={draw,regular polygon, regular polygon sides=3, rotate=-90,minimum size=5,inner sep=0},
	circ/.style={draw,shape=circle,minimum size=5,inner sep=0}]
	\node [circ] (a0) at (1.5,2) {};
	\node [point] (a1) at (1.5,1) {};
	\draw (a0) -- +(0,0.7) ; 	\draw (1,0) -- (a1);
	\draw (2,0) -- (a1);
	\draw  (a1) -- (a0); 
	\end{tikzpicture}
	-\begin{tikzpicture}[scale=0.3,point/.style={draw,shape=circle,fill=blue,minimum size=2,inner sep=0}, left/.style={draw,regular polygon, regular polygon sides=3, rotate=90,minimum size=5,inner sep=0},
	right/.style={draw,regular polygon, regular polygon sides=3, rotate=-90,minimum size=5,inner sep=0},
	circ/.style={draw,shape=circle,minimum size=5,inner sep=0}]
	\node [point] (a0) at (1.5,2) {};
	\node [circ] (a1) at (1,1) {};
	\draw (a0) -- +(0,0.7) ; 	\draw (1,0) -- (a1);
	\draw  (a1) -- (a0); 
	\node [circ] (a2) at (2,1) {};
	\draw (2,0) -- (a2);
	\draw  (a2) -- (a0); 
	\end{tikzpicture}, \\
	\del \verm4 & = 
	\begin{tikzpicture}[scale=0.3,point/.style={draw,shape=circle,fill=blue,minimum size=2,inner sep=0}, left/.style={draw,regular polygon, regular polygon sides=3, rotate=90,minimum size=5,inner sep=0},
	right/.style={draw,regular polygon, regular polygon sides=3, rotate=-90,minimum size=5,inner sep=0},
	circ/.style={draw,shape=circle,minimum size=5,inner sep=0}]
	\node [point] (a0) at (2.83333,2) {};
	\node [point] (a1) at (1.5,1) {};
	\draw (a0) -- +(0,0.7) ; 	\draw (1,0) -- (a1);
	\draw (2,0) -- (a1);
	\draw  (a1) -- (a0); 
	\node [circ] (a2) at (3,1) {};
	\draw (3,0) -- (a2);
	\draw  (a2) -- (a0); 
	\node [circ] (a3) at (4,1) {};
	\draw (4,0) -- (a3);
	\draw  (a3) -- (a0); 
	\end{tikzpicture}
	-\begin{tikzpicture}[scale=0.3,point/.style={draw,shape=circle,fill=blue,minimum size=2,inner sep=0}, left/.style={draw,regular polygon, regular polygon sides=3, rotate=90,minimum size=5,inner sep=0},
	right/.style={draw,regular polygon, regular polygon sides=3, rotate=-90,minimum size=5,inner sep=0},
	circ/.style={draw,shape=circle,minimum size=5,inner sep=0}]
	\node [point] (a0) at (2.5,2) {};
	\node [circ] (a1) at (1,1) {};
	\draw (a0) -- +(0,0.7) ; 	\draw (1,0) -- (a1);
	\draw  (a1) -- (a0); 
	\node [point] (a2) at (2.5,1) {};
	\draw (2,0) -- (a2);
	\draw (3,0) -- (a2);
	\draw  (a2) -- (a0); 
	\node [circ] (a3) at (4,1) {};
	\draw (4,0) -- (a3);
	\draw  (a3) -- (a0); 
	\end{tikzpicture}
	+ \begin{tikzpicture}[scale=0.3,point/.style={draw,shape=circle,fill=blue,minimum size=2,inner sep=0}, left/.style={draw,regular polygon, regular polygon sides=3, rotate=90,minimum size=5,inner sep=0},
	right/.style={draw,regular polygon, regular polygon sides=3, rotate=-90,minimum size=5,inner sep=0},
	circ/.style={draw,shape=circle,minimum size=5,inner sep=0}]
	\node [point] (a0) at (2.16667,2) {};
	\node [circ] (a1) at (1,1) {};
	\draw (a0) -- +(0,0.7) ; 	\draw (1,0) -- (a1);
	\draw  (a1) -- (a0); 
	\node [circ] (a2) at (2,1) {};
	\draw (2,0) -- (a2);
	\draw  (a2) -- (a0); 
	\node [point] (a3) at (3.5,1) {};
	\draw (3,0) -- (a3);
	\draw (4,0) -- (a3);
	\draw  (a3) -- (a0); 
	\end{tikzpicture}
	-\begin{tikzpicture}[scale=0.3,point/.style={draw,shape=circle,fill=blue,minimum size=2,inner sep=0}, left/.style={draw,regular polygon, regular polygon sides=3, rotate=90,minimum size=5,inner sep=0},
	right/.style={draw,regular polygon, regular polygon sides=3, rotate=-90,minimum size=5,inner sep=0},
	circ/.style={draw,shape=circle,minimum size=5,inner sep=0}]
	\node [point] (a0) at (2,3) {};
	\node [circ] (a1) at (1,2) {};
	\node [circ] (a2) at (1,1) {};
	\draw (a0) -- +(0,0.7) ; 	\draw (1,0) -- (a2);
	\draw  (a2) -- (a1); 
	\draw  (a1) -- (a0); 
	\node [point] (a3) at (3,1) {};
	\draw (2,0) -- (a3);
	\draw (3,0) -- (a3);
	\draw (4,0) -- (a3);
	\draw  (a3) -- (a0); 
	\end{tikzpicture}
	-\begin{tikzpicture}[scale=0.3,point/.style={draw,shape=circle,fill=blue,minimum size=2,inner sep=0}, left/.style={draw,regular polygon, regular polygon sides=3, rotate=90,minimum size=5,inner sep=0},
	right/.style={draw,regular polygon, regular polygon sides=3, rotate=-90,minimum size=5,inner sep=0},
	circ/.style={draw,shape=circle,minimum size=5,inner sep=0}]
	\node [point] (a0) at (3,3) {};
	\node [point] (a1) at (2,1) {};
	\draw (a0) -- +(0,0.7) ; 	\draw (1,0) -- (a1);
	\draw (2,0) -- (a1);
	\draw (3,0) -- (a1);
	\draw  (a1) -- (a0); 
	\node [circ] (a2) at (4,2) {};
	\node [circ] (a3) at (4,1) {};
	\draw (4,0) -- (a3);
	\draw  (a3) -- (a2); 
	\draw  (a2) -- (a0); 
	\end{tikzpicture}
	+\begin{tikzpicture}[scale=0.3,point/.style={draw,shape=circle,fill=blue,minimum size=2,inner sep=0}, left/.style={draw,regular polygon, regular polygon sides=3, rotate=90,minimum size=5,inner sep=0},
	right/.style={draw,regular polygon, regular polygon sides=3, rotate=-90,minimum size=5,inner sep=0},
	circ/.style={draw,shape=circle,minimum size=5,inner sep=0}]
	\node [point] (a0) at (2.5,3) {};
	\node [circ] (a1) at (1.5,1) {};
	\draw (a0) -- +(0,0.7) ; 	\draw (1,0) -- (a1);
	\draw (2,0) -- (a1);
	\draw  (a1) -- (a0); 
	\node [point] (a2) at (3.5,2) {};
	\node [circ] (a3) at (3,1) {};
	\draw (3,0) -- (a3);
	\draw  (a3) -- (a2); 
	\node [circ] (a4) at (4,1) {};
	\draw (4,0) -- (a4);
	\draw  (a4) -- (a2); 
	\draw  (a2) -- (a0); 
	\end{tikzpicture}
	-\begin{tikzpicture}[scale=0.3,point/.style={draw,shape=circle,fill=blue,minimum size=2,inner sep=0}, left/.style={draw,regular polygon, regular polygon sides=3, rotate=90,minimum size=5,inner sep=0},
	right/.style={draw,regular polygon, regular polygon sides=3, rotate=-90,minimum size=5,inner sep=0},
	circ/.style={draw,shape=circle,minimum size=5,inner sep=0}]
	\node [point] (a0) at (2.5,3) {};
	\node [point] (a1) at (1.5,2) {};
	\node [circ] (a2) at (1,1) {};
	\draw (a0) -- +(0,0.7) ; 	\draw (1,0) -- (a2);
	\draw  (a2) -- (a1); 
	\node [circ] (a3) at (2,1) {};
	\draw (2,0) -- (a3);
	\draw  (a3) -- (a1); 
	\draw  (a1) -- (a0); 
	\node [circ] (a4) at (3.5,1) {};
	\draw (3,0) -- (a4);
	\draw (4,0) -- (a4);
	\draw  (a4) -- (a0); 
	\end{tikzpicture}, \\
	\del \vera3 & = 
	\begin{tikzpicture}[scale=0.3,point/.style={draw,shape=circle,fill=blue,minimum size=2,inner sep=0}, left/.style={draw,regular polygon, regular polygon sides=3, rotate=90,minimum size=5,inner sep=0},
	right/.style={draw,regular polygon, regular polygon sides=3, rotate=-90,minimum size=5,inner sep=0},
	circ/.style={draw,shape=circle,minimum size=5,inner sep=0}]
	\node [circ] (a0) at (2,2) {};
	\node [point] (a1) at (2,1) {};
	\draw (a0) -- +(0,0.7) ; 	\draw (1,0) -- (a1);
	\draw (2,0) -- (a1);
	\draw (3,0) -- (a1);
	\draw  (a1) -- (a0); 
	\end{tikzpicture}
	-\begin{tikzpicture}[scale=0.3,point/.style={draw,shape=circle,fill=blue,minimum size=2,inner sep=0}, left/.style={draw,regular polygon, regular polygon sides=3, rotate=90,minimum size=5,inner sep=0},
	right/.style={draw,regular polygon, regular polygon sides=3, rotate=-90,minimum size=5,inner sep=0},
	circ/.style={draw,shape=circle,minimum size=5,inner sep=0}]
	\node [point] (a0) at (2,2) {};
	\node [circ] (a1) at (1,1) {};
	\draw (a0) -- +(0,0.7) ; 	\draw (1,0) -- (a1);
	\draw  (a1) -- (a0); 
	\node [circ] (a2) at (2,1) {};
	\draw (2,0) -- (a2);
	\draw  (a2) -- (a0); 
	\node [circ] (a3) at (3,1) {};
	\draw (3,0) -- (a3);
	\draw  (a3) -- (a0); 
	\end{tikzpicture}
	-\begin{tikzpicture}[scale=0.3,point/.style={draw,shape=circle,fill=blue,minimum size=2,inner sep=0}, left/.style={draw,regular polygon, regular polygon sides=3, rotate=90,minimum size=5,inner sep=0},
	right/.style={draw,regular polygon, regular polygon sides=3, rotate=-90,minimum size=5,inner sep=0},
	circ/.style={draw,shape=circle,minimum size=5,inner sep=0}]
	\node [circ] (a0) at (2.25,2) {};
	\node [point] (a1) at (1.5,1) {};
	\draw (a0) -- +(0,0.7) ; 	\draw (1,0) -- (a1);
	\draw (2,0) -- (a1);
	\draw  (a1) -- (a0); 
	\node [circ] (a2) at (3,1) {};
	\draw (3,0) -- (a2);
	\draw  (a2) -- (a0); 
	\end{tikzpicture}
	+ \begin{tikzpicture}[scale=0.3,point/.style={draw,shape=circle,fill=blue,minimum size=2,inner sep=0}, left/.style={draw,regular polygon, regular polygon sides=3, rotate=90,minimum size=5,inner sep=0},
	right/.style={draw,regular polygon, regular polygon sides=3, rotate=-90,minimum size=5,inner sep=0},
	circ/.style={draw,shape=circle,minimum size=5,inner sep=0}]
	\node [circ] (a0) at (1.75,2) {};
	\node [circ] (a1) at (1,1) {};
	\draw (a0) -- +(0,0.7) ; 	\draw (1,0) -- (a1);
	\draw  (a1) -- (a0); 
	\node [point] (a2) at (2.5,1) {};
	\draw (2,0) -- (a2);
	\draw (3,0) -- (a2);
	\draw  (a2) -- (a0); 
	\end{tikzpicture}
	+\begin{tikzpicture}[scale=0.3,point/.style={draw,shape=circle,fill=blue,minimum size=2,inner sep=0}, left/.style={draw,regular polygon, regular polygon sides=3, rotate=90,minimum size=5,inner sep=0},
	right/.style={draw,regular polygon, regular polygon sides=3, rotate=-90,minimum size=5,inner sep=0},
	circ/.style={draw,shape=circle,minimum size=5,inner sep=0}]
	\node [point] (a0) at (1.75,3) {};
	\node [circ] (a1) at (1,2) {};
	\node [circ] (a2) at (1,1) {};
	\draw (a0) -- +(0,0.7) ; 	\draw (1,0) -- (a2);
	\draw  (a2) -- (a1); 
	\draw  (a1) -- (a0); 
	\node [circ] (a3) at (2.5,1) {};
	\draw (2,0) -- (a3);
	\draw (3,0) -- (a3);
	\draw  (a3) -- (a0); 
	\end{tikzpicture}
	-\begin{tikzpicture}[scale=0.3,point/.style={draw,shape=circle,fill=blue,minimum size=2,inner sep=0}, left/.style={draw,regular polygon, regular polygon sides=3, rotate=90,minimum size=5,inner sep=0},
	right/.style={draw,regular polygon, regular polygon sides=3, rotate=-90,minimum size=5,inner sep=0},
	circ/.style={draw,shape=circle,minimum size=5,inner sep=0}]
	\node [point] (a0) at (2.25,3) {};
	\node [circ] (a1) at (1.5,1) {};
	\draw (a0) -- +(0,0.7) ; 	\draw (1,0) -- (a1);
	\draw (2,0) -- (a1);
	\draw  (a1) -- (a0); 
	\node [circ] (a2) at (3,2) {};
	\node [circ] (a3) at (3,1) {};
	\draw (3,0) -- (a3);
	\draw  (a3) -- (a2); 
	\draw  (a2) -- (a0); 
	\end{tikzpicture}.
\end{align*}
It is an easy calculation to show that $\del$ squares to zero.

There is a pairing between a graph and an element  $\phi_1 \cdot  \dots \cdot \phi_k \cdot\psi_1 \cdot \dots  \cdot \psi_l$  of the graded symmetric algebra  $S(\aHC)$ with values in $\aHC(A)$. It is given on the generators by $\sprod{\vermk , \phi} = \phi$ if $\phi \in \aHC^k_\mu$, $\sprod{\verak, \psi}$ if $\psi \in \aHC^{k-1}_\alpha$. For general graphs it is given as the sum over all permutations of possible assignments. Note that since the objects are graded this includes signs.
This results in a tree with each element decorated by an element of $\aHC$ this can be made to an element in $\aHC$ again by composition as the tree describes.  
	
The $L_\infty$-structure is given by $[\psi_1,\dots,\phi_1,\dots]_\mu = \sprod{ \del \vermk , \phi_1 \cdot \phi_k \dots \cdot \psi_1 \cdot \dots  \cdot \psi_l }$ and $[\psi_1,\dots,\phi_1,\dots]_\alpha = \sprod{ \del \verak ,\phi_1 \cdot \phi_k \dots \cdot \psi_1 \cdot \dots  \cdot \psi_l}$, for the restriction of the bracket to $\aHC(A)^k_\mu$ and $\aHC(A)^{k+1}_\alpha$ respectively.  
	
The bracket can be extended to a coderivation $\bar l$  of $S(\aHC)$, and it is a $L_\infty$-structure if and only if $l^2 =0$.
The fact that $\bar l^2 =0$ follows directly from $\del^2=0$.
	
\begin{prop}
	A Maurer-Cartan element on this $L_\infty$-algebra is a Hom-associative algebra, and for a Hom-associative algebra structure $(\mu,\alpha)$ the differential in degree two and three on $\aHC$ are given by $\del (\phi,\psi) = l(\E^{(\mu,\alpha)} (\phi,\psi))$.
\end{prop}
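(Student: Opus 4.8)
Both assertions should follow mechanically once one unwinds that, by construction, $l=\sprod{\del(-),-}$; the content is to check that this pairing reproduces in low degrees the two things the brackets were designed to encode. I would treat the Maurer--Cartan claim and the differential claim separately.

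For the first claim, note that a degree-zero element of the shifted complex is exactly a pair $(\mu,\alpha)$ with $\mu\in\aHC^2_\mu=\Hom(A^{\otimes2},A)$ and $\alpha\in\aHC^2_\alpha=\Hom(A,A)$, and that the Maurer--Cartan equation $\sum_{n\ge1}\tfrac1{n!}l_n\big((\mu,\alpha)^{\cdot n}\big)=0$ takes values in degree one, that is in $\aHC^3_\mu\oplus\aHC^3_\alpha$. The plan is to expand $(\mu,\alpha)^{\cdot n}$ into homogeneous monomials in $\mu$ and $\alpha$ and keep only those paired nontrivially by the explicit degree-one brackets. In the $\aHC^3_\mu$-slot only $l_3$ survives, via $[\mu,\mu,\alpha]_\mu$; accounting for the three placements of the single $\alpha$ gives $\tfrac1{3!}\cdot3\,[\mu,\mu,\alpha]_\mu=\mu(\mu\otimes\alpha)-\mu(\alpha\otimes\mu)$, which vanishes precisely when $A$ is Hom-associative. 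In the $\aHC^3_\alpha$-slot, $l_2$ contributes $[\mu,\alpha]_\alpha=\alpha\mu$ and $l_3$ contributes $\tfrac1{3!}\cdot3\,[\mu,\alpha,\alpha]_\alpha=-\mu(\alpha\otimes\alpha)$, whose sum $\alpha\mu-\mu(\alpha\otimes\alpha)$ vanishes precisely when $\alpha$ is multiplicative. Only $l_2$ and $l_3$ can contribute to the degree-one part, as the list of brackets shows, so the Maurer--Cartan equation is equivalent to the conjunction of the two structure axioms.

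For the differential claim I would use the twisting formula
\begin{equation*}
 l\big(\E^{(\mu,\alpha)}(\phi,\psi)\big)=\sum_{n\ge0}\tfrac1{n!}\,l_{n+1}\big((\mu,\alpha)^{\cdot n},(\phi,\psi)\big)
\end{equation*}
and verify that it equals $\del(\phi,\psi)$ separately for $(\phi,\psi)\in\aHC^2$ and $(\phi,\psi)\in\aHC^3$, these being the only degrees where all the needed brackets have been written out explicitly (outputs landing in $\aHC^3$ and $\aHC^4$). Working one summand at a time, I expect the $\aHC_\mu$-component to reassemble $\delmm\phi-\delam\psi$: the brackets of $\phi$ against copies of $\mu$ and $\alpha$ produce the outer terms $\alpha^{n-1}(x_1)\,\phi(\cdots)$, $\phi(\cdots)\,\alpha^{n-1}(x_{n+1})$ and the inner insertions $\phi(\dots,x_ix_{i+1},\dots)$ of $\delmm$, while the brackets of $\psi$ against two products give the two terms of $\delam$. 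Dually, the $\aHC_\alpha$-component should reassemble $\delma\phi-\delaa\psi$, with $[\phi,\alpha]_\alpha=\alpha\circ\phi$ and the $S_n$-symmetrized $\alpha$-insertions $\phi\circ\alpha^{\otimes n}$ giving $\delma\phi=\alpha\circ\phi-\phi\circ\alpha^{\otimes n}$, and the $\psi$-brackets such as $[\psi,\mu,\alpha]_\alpha$ giving $\delaa\psi$.

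The main obstacle is bookkeeping rather than any conceptual difficulty, since $\bar l^{\,2}=0$ is already guaranteed by $\del^2=0$ at the level of graphs. Concretely, the delicate points are: (i) checking that each symmetry factor $1/n!$ is exactly cancelled by the number of ways of distributing copies of $\mu$ and $\alpha$ among the input slots of the relevant graph, so that the resulting integer coefficients coincide with the $(-1)^i$ weights in $\delmm$, $\delaa$, $\delam$; (ii) tracking the Koszul signs coming from the shift and from the graded-symmetric pairing $\sprod{-,-}$, and confirming that they reproduce the global sign convention $\del(\phi,\psi)=(\delmm\phi-\delam\psi,\ \delma\phi-\delaa\psi)$, in particular the minus signs on the $\aHC_\alpha$-row; and (iii) confirming that for inputs of degree two and three the exponential $\E^{(\mu,\alpha)}(\phi,\psi)$ truncates to the finitely many monomials covered by the listed brackets, all higher monomials being annihilated for degree reasons. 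Once these are verified the proposition follows as a direct consistency check that the graph-defined brackets recover both the algebra axioms and the differential $\del$ they were built to encode.
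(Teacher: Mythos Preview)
Your approach is essentially the same as the paper's: both arguments amount to unwinding the explicit degree-one and degree-two brackets against $(\mu,\alpha)$ and checking that the Maurer--Cartan equation reproduces Hom-associativity and multiplicativity, while the twisted sum reproduces $\del$. The paper's proof simply asserts this is clear from the defining equations and writes the differential as a sum of brackets $[\mu,\dots,\alpha,\dots,\phi]$ and $[\mu,\dots,\alpha,\dots,\psi]$; your proposal spells out the multinomial counting and sign bookkeeping that the paper leaves implicit, but the underlying verification is identical.
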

\begin{proof}
	The bracket defined by graphs are the same as the brackets given above. It is clear by looking at the defining equations that a Maurer-Cartan element for them is a Hom-associative algebra and the differential in degree two and three are given by these brackets as $\del (\phi,\psi) = [\mu, \dots,\alpha, \dots ,\phi]_\mu + [\mu, \dots,\alpha, \dots ,\psi]_\mu , [\mu, \dots,\alpha, \dots ,\phi]_\alpha + [\mu, \dots,\alpha, \dots ,\psi]_\alpha)$, $\mu,\dots,\alpha, \dots$ stands for zero or more insertions of $\alpha$ and $\mu$. This can be written as $\del (\phi,\psi) = l (\E^{(\mu,\alpha)} \cdot (\phi,\psi))$.
\end{proof}

\subsection{$\alpha$-type cohomology under Yau twist} \label{sc:yautwist}
	
We study now  the relation between two Hom-associative algebras related by a Yau twist. 
	
Let $A$ be a Hom-associative algebra and $\gamma$ a Hom-algebra morphism of $A$. We consider the Hom-associative algebra $A_\gamma$  obtained by Yau twist.
	
\begin{prop}
	If $(\phi,\psi)$ is a $n$-cocycle of $A$, which commutes with $\gamma$, i.e. $\gamma \phi = \phi  \gamma^{\otimes n}$ and $\gamma \psi = \psi \gamma^{\otimes (n-1)}$, then $(\tilde{\phi},\tilde{\psi}) =(\gamma^{n-1} \phi, \gamma^{n-1} \psi)$ is a $n$-cocycle of $A_\gamma$.
\end{prop}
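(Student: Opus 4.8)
The plan is to reduce everything to a single \emph{intertwining relation} between the differentials of $A$ and those of $A_\gamma$. Concretely, for a cochain $c$ of degree $m$ write $\tilde c = \gamma^{m-1} c$; I want to show that each of the four partial differentials satisfies $\delta^{A_\gamma}(\tilde c) = \gamma^m\,\delta^A(c)$ on cochains commuting with $\gamma$, where $\delta$ ranges over $\delmm,\delaa,\delma,\delam$ and $\gamma^m\delta^A c$ is read in the correct target degree. Granting this, linearity of $\del$ gives $\del^{A_\gamma}(\tilde\phi,\tilde\psi) = \gamma^n\,\del^A(\phi,\psi) = 0$, since $(\phi,\psi)$ is assumed to be a cocycle.

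First I would record the two structural facts I will use repeatedly. In $A_\gamma$ the structure map is $\gamma\alpha$ and the product is $x\cdot_\gamma y = \gamma(xy)$; since $\gamma$ is a Hom-algebra morphism we have $\gamma\alpha = \alpha\gamma$, hence $(\gamma\alpha)^k = \gamma^k\alpha^k$. The morphism property also gives $\gamma^k(a)\cdot\gamma^k(b) = \gamma^k(a\cdot b)$, which will let me collect the $\gamma$'s coming from the module actions and internal products into a single outer power of $\gamma$. The commutation hypotheses $\phi\gamma^{\otimes n}=\gamma\phi$ and $\psi\gamma^{\otimes(n-1)}=\gamma\psi$ will let me pull the $\gamma$'s sitting on the arguments through $\phi$ and $\psi$.

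To illustrate the mechanism on $\delmm$, consider the $i$-th inner-product term of $\delmm^{A_\gamma}\tilde\phi$. Its arguments are $\gamma\alpha(x_1),\dots,\gamma(x_ix_{i+1}),\dots,\gamma\alpha(x_{n+1})$, i.e. $\gamma^{\otimes n}$ applied to $\alpha(x_1),\dots,x_ix_{i+1},\dots,\alpha(x_{n+1})$, so $\tilde\phi = \gamma^{n-1}\phi$ evaluated there equals $\gamma^{n-1}\phi\gamma^{\otimes n}(\,\cdots) = \gamma^n\phi(\,\cdots)$ by the commutation hypothesis; this is exactly $\gamma^n$ times the corresponding term of $\delmm^A\phi$. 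For the two outer terms one instead uses $\gamma^k(a)\cdot\gamma^k(b) = \gamma^k(ab)$ together with $(\gamma\alpha)^{n-1}=\gamma^{n-1}\alpha^{n-1}$ and the extra $\gamma$ from $\cdot_\gamma$, which again produces the common factor $\gamma^n$. Summing gives $\delmm^{A_\gamma}\tilde\phi = \gamma^n\,\delmm^A\phi$. The computations for $\delaa$, $\delma$ and $\delam$ are entirely parallel: in every term the $\gamma$'s contributed by $\cdot_\gamma$, by the powers of $\gamma\alpha$, and by commuting $\gamma$ past $\phi$ or $\psi$ combine, via the morphism identity, to a single factor $\gamma^n$ in front of the corresponding $A$-term.

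I expect no genuine obstacle here: the content is purely the bookkeeping of where the morphism property (for the product and action terms) versus the commutation hypothesis (for the terms where $\gamma$ lands on the cochain's arguments) is invoked, and of checking that both produce the same total power $\gamma^n$, so that the factor is uniform across all four differentials. Once that uniformity is confirmed, assembling the four relations into $\del^{A_\gamma}(\tilde\phi,\tilde\psi)=\gamma^n\,\del^A(\phi,\psi)$ and invoking the cocycle hypothesis finishes the argument.
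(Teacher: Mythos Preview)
Your proposal is correct and follows essentially the same approach as the paper: both establish the intertwining relation $\tilde\del(\tilde\phi,\tilde\psi)=\gamma^n\bigl(\del(\phi,\psi)\bigr)$ by checking it on each of the four partial differentials, using that $\gamma$ commutes with $\alpha$, is multiplicative, and commutes with $\phi,\psi$. The only cosmetic difference is that the paper writes out the verification for $\delaa$ while you illustrate it on $\delmm$; the bookkeeping is otherwise identical.
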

\begin{proof}
	We show that $\tilde\del (\tilde{\phi},\tilde{\psi}) = \gamma^n ( \del(\phi, \psi))$, where $\del$ denotes the differential in $A$ and $\tilde{\del}$ the one in $A_\gamma$.
	This is an easy calculation, we show it for one of the differentials, the others work analogously.
	\begin{align*}
		(\delaa \tilde\psi) (x_1, \dots, x_n) & =  \tilde{\alpha}^{n-1}(x_1) \tilde{\cdot} \tilde{\psi}(x_2, \dots, x_n) + \sum_i (-1)^i \tilde{\psi}(\tilde{\alpha}(x_1) \dots,x_i \tilde{\cdot} x_{i+1}, \dots, \tilde{\alpha}(x_n) \\ & + \tilde{\psi}(x_1, \dots, x_{n-1}) \tilde{\cdot} \tilde{\alpha}^{n-1}(x_n) \\
		&= \gamma \left( \gamma^{n-1}(\alpha^{n-1}(x_1)) \gamma^{n-1} (\psi(x_2,\dots, x_n)) \right) \\
		&+ \sum (-1)^i  \gamma^{n-1}\big(\psi\big(\gamma(\alpha(x_1)), \dots, \gamma(x_i x_{i+1}), \dots, \gamma(\alpha(x_n))\big)\big)  \\
		&+ \gamma\left( \gamma^{n-1}( \psi(x_1, \dots, x_{n-1})) \gamma^{n-1}( \alpha^{n-1}(x_n)) \right)   \\
		&= \gamma^n( (\delaa \psi)( x_1,\dots, x_n) ).
	\end{align*}
\end{proof}

We aim to study the  $\alpha$-type cohomology of Hom-associative algebras of associative type. This includes in particular the case where $\alpha$ is invertible. 
For this we first need another complex, which gives a cohomology for an associative algebra and an endomorphisms on it. The cohomology of a homomorphism between two different algebras is well known and was introduced in \citep{gerstenhabermor1,gerstenhabermor2}. More recently a $L_\infty$-structure on this complex was found in \citep{fregier1,fregier2}. 
To consider the cohomology of an endomorphism  one just has to consider the complex, where the cochains corresponding to the different algebras agree.
The complex for an associative algebra $A$ and an endomorphism $\gamma$ is given by 
\begin{equation*}
	C^n(\gamma) :=	C^n(A,\gamma) :=  C^n_\mu(A,\gamma) \oplus C^n_\gamma(A,\gamma) = \Hom(A^{\otimes n},A) \oplus \Hom(A^{\otimes (n-1)},\tilde A),
\end{equation*}
for $n \geq 2$, $C^1(A,\gamma):=\Hom(A,\tilde A)$ and $C^n(A,\gamma) =0$ for $n\leq 0$, where $\tilde{A}$ is the algebra $A$ regarded as an $A$-bimodule with the action twisted by $\gamma$, i.e. the left action is given by $a \cdot b = \gamma(a)b$ for $a \in A, b \in \tilde A$ and similarly for the right action.
The differential for an element $(\phi,\psi) \in C^n(\gamma)$ is given by $\del (\phi, \psi) = ( \del_\mu \phi, -\del_\mu \psi + \del_\gamma \phi)$, where $\del_\gamma$ is given by $\del_\gamma \phi = \gamma \phi - \phi \gamma^{\otimes n}$ and $\del_\mu$ is the ordinary Hochschild differential.
	
Note that again we have used $\{0\}$ instead of $\Hom(\K,\tilde A)$ in the low degrees since this simplifies making the connection to the $\alpha$-type Hochschild cohomology.
	
This cohomology can be computed, using spectral sequences. But since the bicomplex only has two rows not much of the theory is needed. We denote by $H(C(\gamma),\del_\mu)$ the cohomology with respect to $\del_\mu$ and $\delmb$. On this there is a differential induced by $\del_\gamma$. 
With this we have 
\begin{prop}
	The cohomology of the total complex $H(C(\gamma),\del) = H(H(C(\gamma),\del_\mu),\del_\gamma)$.
\end{prop}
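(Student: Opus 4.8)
The plan is to recognise $C^\bullet(\gamma)$ as the total complex of a double complex with exactly two rows and to exploit the fact that a two-row bicomplex has a spectral sequence degenerating almost immediately. I place $C^\bullet_\mu(A,\gamma)$ in row $0$ and $C^\bullet_\gamma(A,\gamma)$ in row $1$. The horizontal differentials are the two Hochschild differentials, $\del_\mu$ on row $0$ (coefficients in $A$) and $\delmb$ on row $1$ (coefficients in the twisted bimodule $\tilde A$), while the single vertical differential $\del_\gamma\colon C^n_\mu \to C^{n+1}_\gamma$ joins the two rows. The identity $\del^2 = 0$, which holds for $C^\bullet(\gamma)$ by construction, unpacks precisely into $\del_\mu^2 = 0$, $\delmb^2 = 0$ and the compatibility $\delmb\,\del_\gamma = \del_\gamma\,\del_\mu$; this last relation says that $\del_\gamma$ is a chain map from $(C^\bullet_\mu,\del_\mu)$ to $(C^\bullet_\gamma,\delmb)$ and hence induces a well-defined map on $\del_\mu$-cohomology. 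Equivalently, $C^\bullet(\gamma)$ is the mapping cone of $\del_\gamma$, a viewpoint I fall back on at the end.

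Next I would run the spectral sequence associated with the filtration by rows, i.e. by the number of $\gamma$-factors, so that the filtration degree is the row index. On the zeroth page the only surviving differential is the one preserving the row, namely the horizontal differential $\del_\mu$ together with $\delmb$; hence $E_1 = H(C(\gamma),\del_\mu)$ as a complex with two rows. The induced first differential is exactly the map on cohomology coming from $\del_\gamma$, so the second page is $E_2 = H\bigl(H(C(\gamma),\del_\mu),\del_\gamma\bigr)$, which is the right-hand side of the claim.

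It then remains to show that the spectral sequence collapses at $E_2$. This is where the two-row hypothesis does all the work: for $r \ge 2$ the differential $d_r$ raises the row index by $r$, so it starts in row $0$ or $1$ and lands in row $\ge 2$, where the complex is zero; hence $d_r = 0$ and $E_2 = E_\infty$. The filtration has only two steps and is therefore bounded, so the spectral sequence converges to $H(C(\gamma),\del)$, and since we work over the field $\K$ the associated graded determines the total cohomology up to split extensions, yielding the stated equality. I expect no genuine obstacle here: the content is entirely the degeneration of a two-row spectral sequence, and the only thing to watch is the sign and indexing bookkeeping that makes $\del_\gamma$ descend to $\del_\mu$-cohomology. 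Should one prefer to avoid spectral sequences altogether, the mapping-cone description gives the same conclusion directly, since over $\K$ the long exact sequence of the cone of $\del_\gamma$ breaks into short exact sequences expressing $H^n(C(\gamma),\del)$ as the kernel of $\del_\gamma^\ast$ in degree $n$ together with its cokernel in degree $n-1$, which is precisely $H\bigl(H(C(\gamma),\del_\mu),\del_\gamma\bigr)$.
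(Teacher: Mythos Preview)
Your argument is correct and is exactly the approach the paper takes: identify $C^\bullet(\gamma)$ as the total complex of a two-row bicomplex and observe that the associated spectral sequence degenerates at $E_2$ because any higher differential would have to jump at least two rows. You have simply been more explicit about the bookkeeping (the compatibility $\delmb\,\del_\gamma=\del_\gamma\,\del_\mu$, the extension issue over a field, and the mapping-cone reformulation) than the paper, whose proof is a one-line appeal to the same spectral sequence.
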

\begin{proof}
	This follows directly considering the spectral sequence associated to the bicomplex $C(\gamma)$ with the vertical filtration, since then the differential on the second sheet already vanishes, since the complex only has two rows. 
\end{proof}
If one knows the Hochschild cohomology of the algebra $A$ well, this can be computed quite easily.
Also note that there is a chain map $(C^\bullet_\mu(\gamma),\del_\mu) \to (C^{\bullet+1}_\gamma(\gamma),\del_{\tilde\mu})$ given by $\phi \mapsto \gamma \phi$, which is even an isomorphism if $\gamma$ is invertible. 
	
On the other hand the other iterated  cohomology $H(H(C(\gamma),\del_\gamma),\del_\mu)$ is in general  not isomorphic to the cohomology   $H(\gamma)$, since there is still a non-trivial differential on it. Only the cohomology with respect to this differential gives the total cohomology. However it turns out that this differential vanishes in nice situations.
If this is the case every cocycle is cohomologous to either one if the form $(\phi,0)$ or one of the form $(0,\psi)$.

Now let $A$ be an associative algebra, $\gamma$ an algebra map   and $A_\gamma$ the Yau twist of $A$ by $\gamma$. Then we can define a map $\Phi:C(A,\gamma) \to \aHC(A_\gamma)$ for $(\phi,\psi) \in C^n(A,\gamma)$ by 
\begin{equation}
	(\phi,\psi) \mapsto ( \gamma^{n-1} \phi + \sum_{i=1}^{n-1}  (-1)^i \gamma^{n-2} \psi \circ_i \mu, \gamma^{n-2} \psi), 
\end{equation}
where $\psi \circ_i \mu := \psi (\id^{\otimes i-1} \otimes \mu \otimes \id^{\otimes n-i-1})$ .
	
\begin{theorem}\label{th:cohoyau}
	The map $\Phi$ is a chain map, so it induces a map in cohomology. If $\gamma$ is invertible it is an isomorphism and especially the corresponding cohomologies are also isomorphic.
\end{theorem}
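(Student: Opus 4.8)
I would prove the two assertions in turn, dealing first with the soft one. When $\gamma$ is invertible, write $\Phi(\phi,\psi)=(\gamma^{n-1}\phi+T\psi,\ \gamma^{n-2}\psi)$ with $T\psi=\sum_{i=1}^{n-1}(-1)^i\gamma^{n-2}\psi\circ_i\mu$. Then $\Phi$ is triangular in each degree: its $\aHC_\alpha$-component is $\gamma^{n-2}\psi$, which depends on $\psi$ alone through the invertible operator $\gamma^{n-2}$, while its $\aHC_\mu$-component is $\gamma^{n-1}\phi$ plus a term involving only $\psi$. Hence from any $(\eta,\zeta)\in\aHC^n(A_\gamma)$ one recovers $\psi=\gamma^{-(n-2)}\zeta$ and then $\phi=\gamma^{-(n-1)}(\eta-T\psi)$, so $\Phi$ is a linear bijection on cochains. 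A bijective chain map is an isomorphism of complexes and therefore induces an isomorphism on cohomology; thus everything reduces to the chain-map identity $\del_{A_\gamma}\Phi=\Phi\del$, which does not require $\gamma$ invertible.

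To establish that identity I would check the $\aHC_\mu$- and $\aHC_\alpha$-components separately, using two features of the Yau twist $A_\gamma$ (structure map $\gamma$, product $\gamma\mu$): that $\gamma$ is an algebra morphism, giving the \emph{sliding identity} $(\kappa\circ_i\mu)\gamma^{\otimes(m+1)}=(\kappa\gamma^{\otimes m})\circ_i\mu$ for $\kappa\colon A^{\otimes m}\to A$ (because $\mu(\gamma\otimes\gamma)=\gamma\mu$), and that on $A_\gamma$ the map $\delma$ is exactly the $\gamma$-commutator $\kappa\mapsto\gamma\kappa-\kappa\gamma^{\otimes\bullet}=\del_\gamma\kappa$. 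The core computation concerns the diagonal pieces: expanding with the sliding identity alone shows that $\delmm(\gamma^{n-1}\phi)=\gamma^n\del_\mu\phi$ and $\delaa(\gamma^{n-2}\psi)=\gamma^{n-1}\del_{\tilde\mu}\psi$, each modulo a correction of the shape $\sum_i(\pm)\gamma^{\bullet}(\del_\gamma(-))\circ_i\mu$ measuring the failure of $\phi$, respectively $\psi$, to commute with $\gamma$. The term $T\psi$ is then precisely what is needed: since $\delma$ is the $\gamma$-commutator, $\delma(T\psi)=\gamma(T\psi)-(T\psi)\gamma^{\otimes n}$ is again of the form $\sum_i(\pm)\gamma^{n-2}(\del_\gamma\psi)\circ_i\mu$ and is tuned to cancel the surplus left by $\delaa(\gamma^{n-2}\psi)$, producing exactly $\gamma^{n-1}(-\del_{\tilde\mu}\psi+\del_\gamma\phi)$, the $\aHC_\alpha$-component of $\Phi\del(\phi,\psi)$. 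Here one uses the compatibility $\del_{\tilde\mu}\del_\gamma=\del_\gamma\del_\mu$ that already makes $C(A,\gamma)$ a complex.

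The laborious part, which I expect to be the main obstacle, is the $\aHC_\mu$-component $\delmm\Phi_\mu-\delam\Phi_\alpha$. Both $\delmm(T\psi)$ and $\delam(\gamma^{n-2}\psi)$ produce doubly contracted expressions $\psi\circ_i\mu\circ_j\mu$ carrying two insertions of $\mu$; these must be reorganised by associativity of $A$ (matching $\psi(\dots,(x_ax_b)x_c,\dots)$ with $\psi(\dots,x_a(x_bx_c),\dots)$) together with the sliding identity, in the same spirit as the cancellations in the proof of \Cref{th:delalg}. The difficulty is purely combinatorial: one must track boundary against interior contractions and all signs until the terms collapse to $\gamma^n\del_\mu\phi+\sum_{i}(-1)^i\gamma^{n-1}(-\del_{\tilde\mu}\psi+\del_\gamma\phi)\circ_i\mu$, which is the $\aHC_\mu$-component of $\Phi\del(\phi,\psi)$. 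Once the sliding identity and the reading of $\delma$ as the $\gamma$-commutator are in hand, no new idea is needed and every term finds its partner, yielding $\del_{A_\gamma}\Phi=\Phi\del$ and hence the theorem.
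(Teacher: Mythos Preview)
Your proposal follows essentially the same route as the paper: the paper declares the chain-map identity a ``straightforward calculation'' and, for invertibility, writes down the explicit inverse $(\phi,\psi)\mapsto(\gamma^{-n+1}\phi-\sum_i(-1)^i(\gamma^{-n+1}\psi)\circ_i\mu,\ \gamma^{-(n-2)}\psi)$, which is exactly what your triangular argument produces. Your outline of the calculation---using the sliding identity $(\kappa\circ_i\mu)\gamma^{\otimes(m+1)}=(\kappa\gamma^{\otimes m})\circ_i\mu$ and reading $\delma$ on $A_\gamma$ as the $\gamma$-commutator---is the natural way to organise what the paper leaves implicit.

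One caution: your description of the $\aHC_\alpha$-component is a bit optimistic. If you actually expand $\delma(T\psi)$ and the middle-term correction in $\delaa(\gamma^{n-2}\psi)$, both come out as $\sum_i(-1)^i\gamma^{n-2}(\del_\gamma\psi)\circ_i\mu$ with the \emph{same} sign, so in $\delma\Phi_\mu-\delaa\Phi_\alpha$ they add rather than cancel. When you carry out the computation in full you will have to track these $\del_\gamma\psi$-terms carefully against the conventions fixed for $\del_\mu$ on $C_\gamma(A,\gamma)$; the bookkeeping is more delicate than your sketch suggests, even though the overall mechanism (sliding plus $\gamma$-commutator) is right.
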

\begin{proof}
	The fact that it is a chain map is a straightforward calculation. 
												
	If $\gamma$ is invertible, the inverse of $\Phi$ is given by
	\begin{equation}
		(\phi,\psi) \mapsto ( \gamma^{-n+1} \phi -  \sum_{i=1}^{n-1} (-1)^i (\gamma^{-n+1} \psi) \circ_i \mu , \gamma^{-(n-2)} \psi),
	\end{equation}
	which is easy to check.
\end{proof}

Using this proposition and the previous remarks on how to compute $H(\gamma)$ it is possible to compute the $\alpha$-type Hochschild cohomology of $A_\gamma$, if one knows the Hochschild cohomology of $A$. Especially we have
\begin{corollary}
	If $(A,\mu,\alpha)$ is a Hom-associative algebra, such that $\alpha$ is invertible, then its $\alpha$-type cohomology $\aHH(A)$ is isomorphic to $H(A_{\alpha^{-1}},\alpha)$, where $A_{\alpha^{-1}}$ is the associative algebra obtained by Yau twist of $A$ with $\alpha^{-1}$.  
\end{corollary}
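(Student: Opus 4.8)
The plan is to deduce the corollary from \Cref{th:cohoyau} by exhibiting $A$ as a Yau twist of an honest associative algebra. First I would set $B := A_{\alpha^{-1}}$, which by \Cref{th:assass} is an ordinary associative algebra, its multiplication being $\alpha^{-1}\circ\mu$ and its structure map $\alpha^{-1}\circ\alpha=\id$. The key identification is that $A$ is recovered from $B$ by a Yau twist along $\alpha$: the twist $B_\alpha$ has multiplication $\alpha\circ(\alpha^{-1}\circ\mu)=\mu$ and structure map $\alpha\circ\id=\alpha$, so that $(A_{\alpha^{-1}})_\alpha = A$ on the nose.

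To feed this into \Cref{th:cohoyau} I must check that $\alpha$ is an algebra endomorphism of the associative algebra $B$, since the theorem takes as input an associative algebra together with an endomorphism. This is where the multiplicativity hypothesis on $\alpha$ enters: from $\alpha\circ\mu=\mu\circ(\alpha\otimes\alpha)$ one gets $\alpha\circ(\alpha^{-1}\mu)=\mu=(\alpha^{-1}\mu)\circ(\alpha\otimes\alpha)$, so $\alpha$ respects the product $\mu_B=\alpha^{-1}\mu$ of $B$; being invertible by hypothesis, it is in fact an algebra automorphism.

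With these two observations in place the corollary is immediate: apply \Cref{th:cohoyau} to the associative algebra $B=A_{\alpha^{-1}}$ and its invertible endomorphism $\gamma=\alpha$. The theorem furnishes a chain map $\Phi\colon C(A_{\alpha^{-1}},\alpha)\to \aHC\big((A_{\alpha^{-1}})_\alpha\big)=\aHC(A)$ which, because $\gamma=\alpha$ is invertible, is an isomorphism of chain complexes with the explicit inverse recorded in the proof of that theorem. An isomorphism of complexes induces an isomorphism on cohomology, yielding $\aHH(A)\cong H(A_{\alpha^{-1}},\alpha)$, as claimed.

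I do not expect a genuine obstacle here, since all the real work is already contained in \Cref{th:cohoyau}; the corollary is essentially the specialization $\gamma=\alpha$ together with the bookkeeping identity $(A_{\alpha^{-1}})_\alpha=A$. The only points requiring a moment's care are matching the low-degree conventions (the paper deliberately uses $\{0\}$ rather than $\Hom(\K,\tilde A)$ in both complexes precisely so that this identification is clean) and confirming that the degree shift in the definition of $\Phi$ aligns with that in the definition of $\aHC$; both are routine to verify.
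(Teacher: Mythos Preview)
Your argument is correct and is precisely the intended one: the paper states this corollary without proof because it is the immediate specialization of \Cref{th:cohoyau} to the associative algebra $B=A_{\alpha^{-1}}$ with $\gamma=\alpha$, using \Cref{th:assass} and the identity $(A_{\alpha^{-1}})_\alpha=A$. Your verification that $\alpha$ is an algebra endomorphism of $B$ and the remark on the low-degree conventions are the only points to check, and you handle them correctly.
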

\begin{corollary}
	Let $(A,\mu,\alpha)$ be a Hom-associative algebra of associative-type and $\tilde A$ the corresponding associative algebra, such that $\tilde A_\alpha =A$.  Then $H(\tilde A,\alpha)$ is a subspace of $\aHH(A)$ via $\Phi$.
\end{corollary}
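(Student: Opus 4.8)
The plan is to read the statement as asserting that $\Phi$ induces an \emph{injection} of $H(\tilde A,\alpha)$ into $\aHH(A)$, and to deduce it from \Cref{th:cohoyau} together with a left-inverse construction. First I would set $\gamma=\alpha$, so that the hypothesis $\tilde A_\alpha=A$ places us exactly in the setting of \Cref{th:cohoyau}, with the associative algebra $\tilde A$ and its endomorphism $\alpha$. By that theorem $\Phi\colon C(\tilde A,\alpha)\to\aHC(A)$ is a chain map, so it descends to a linear map $\Phi_*\colon H(\tilde A,\alpha)\to\aHH(A)$; its image is automatically a subspace, whence the only real content is that $\Phi_*$ is injective, i.e. that a cocycle of $C(\tilde A,\alpha)$ whose image under $\Phi$ is exact in $\aHC(A)$ is itself exact.

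The key step I would aim for is a cochain map $\Psi\colon\aHC(A)\to C(\tilde A,\alpha)$ together with a chain homotopy witnessing $\Psi\circ\Phi\simeq\id$ on $C(\tilde A,\alpha)$; then $\Psi_*\Phi_*=\id$ forces $\Phi_*$ injective. When $\alpha$ is invertible this is immediate and is already the content of the preceding corollary: the explicit inverse written in the proof of \Cref{th:cohoyau} serves as $\Psi$ and $\Phi_*$ is even an isomorphism. In the general associative-type case the strategy is to split $A$, as a vector space, into its $\alpha$-stable part and the part on which $\alpha$ is locally nilpotent, and to build $\Psi$ so that on the stable summand it reproduces the invertible-case inverse while on the nilpotent summand the relevant cochains are exhibited as coboundaries. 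The homotopy would be assembled from the same insertion operators $\psi\mapsto\psi\circ_i\mu$ that occur in the definition of $\Phi$.

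To organise the bookkeeping I would exploit the two-row bicomplex structure of the target-side model $C(\tilde A,\alpha)$, for which the earlier Proposition gives $H(C(\gamma),\del)=H\bigl(H(C(\gamma),\del_\mu),\del_\gamma\bigr)$, and the chain map $\phi\mapsto\gamma\phi$ relating the two rows. Comparing this iterated cohomology with the corresponding filtration of $\aHC(A)$ by the $\aHC_\alpha$-degree reduces injectivity of $\Phi_*$ to a statement on the level of the Hochschild cohomology $\HH(\tilde A)$, where $\Phi$ acts by the induced maps $\phi\mapsto\alpha^{n-1}\phi$ and $\psi\mapsto\alpha^{n-2}\psi$. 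In the favourable situations flagged after the Proposition, every class is represented either as $(\phi,0)$ or as $(0,\psi)$, and on each type the comparison is transparent.

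The main obstacle is precisely the non-invertibility of $\alpha$: the $\aHC_\alpha$-component of $\Phi(\phi,\psi)$ is $\alpha^{n-2}\psi$, so recovering $\psi$ from its image requires controlling $\ker\alpha$, and at the level of $\HH(\tilde A)$ one must rule out that the induced powers of $\alpha_*$ annihilate a nonzero class coming from $\Phi_*$. Showing that the $\alpha$-nilpotent contributions are exact — equivalently, that the homotopy $\Psi$ can be defined \emph{without} ever inverting $\alpha$ — is the delicate point of the argument; once it is in place, the verification that $\Psi$ is a chain map and that $\Psi\circ\Phi\simeq\id$ is the same kind of routine commutator-and-associativity calculation already carried out in the proof of \Cref{th:delalg}.
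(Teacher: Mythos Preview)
The paper gives no proof for this corollary: it is stated immediately after \Cref{th:cohoyau} and is meant to be read off from that theorem. The only content the paper actually uses is that $\Phi$ is a chain map, hence induces a linear map $\Phi_*\colon H(\tilde A,\alpha)\to\aHH(A)$; the phrase ``is a subspace of $\aHH(A)$ via $\Phi$'' refers to the image of this map. Nothing more is argued.

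You have read the statement as asserting that $\Phi_*$ is \emph{injective}, and you then try to produce a chain-level left inverse $\Psi$ with $\Psi\Phi\simeq\id$. That is a much stronger claim than what the paper establishes, and your own outline does not close the gap: you correctly isolate the obstruction (the $\aHC_\alpha$-component of $\Phi(\phi,\psi)$ is $\alpha^{n-2}\psi$, so recovering $\psi$ needs control of $\ker\alpha$), but the proposed fix---split $A$ into an $\alpha$-stable and an $\alpha$-locally-nilpotent part and show the nilpotent contributions are exact---is only a hope, not an argument. No construction of $\Psi$ on the nilpotent summand is given, and there is no reason in general why a cocycle $(\phi,\psi)$ with $\Phi(\phi,\psi)$ exact should itself be exact when $\alpha$ is not injective. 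The spectral-sequence comparison you sketch likewise reduces to the question of whether multiplication by $\alpha_*^{\,n-1}$ is injective on $\HH(\tilde A)$, which is false for arbitrary $\tilde A$ and $\alpha$.

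In short: the paper's ``proof'' is the one-line observation that a chain map induces a map in cohomology whose image is a subspace; your proposal aims at a genuinely stronger statement (injectivity of $\Phi_*$) that the paper neither claims to prove nor proves, and your argument for it is incomplete at exactly the point you flag as delicate.
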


\section{Deformation theory including the structure map $\alpha$}\label{sc:deformation}
	
In this section we consider deformations of  Hom-associative algebras and their  descriptions using the cohomology defined in the previous section. Up to now mostly the case where only the multiplication is changed is discussed. But here we want to consider the more  general situation where the multiplication as well as the structure map are deformed. We study the deformation equation and describe infinitesimal deformations. Moreover, we discuss obstructions to extend a deformation of  order $n$ to a deformation of order $n+1$ and  show that they gives rise to 3-cocycles in the  $\alpha$-type Hochschild cohomology. We also give a generalization of the well known fact that a deformation of a commutative algebra gives rise to a Poisson algebra.
	
First we want to recall the basic definitions of formal deformations using formal power series as introduced by Gerstenhaber in  \citep{gerstenhaber1}. 
	
Let $\K\ph$ be a formal power series ring and $A\ph$ be a formal power series space whose element are of the form $\sum_{k=0}^{\infty} a_k t^k$ with $a_k\in A$.
	
\begin{defn}
	Let $(A,\mu,\alpha)$ be a Hom-associative algebra over $\K$, then a deformation of $A$ is a Hom-associative algebra $(A\ph, \star, \alpha_\star)$ over $\K\ph$, where $a \star b = \mu_\star(a,b) = \sum_{i=0}^\infty t^i \mu_i(a,b)$ and $\alpha_\star = \sum_{i=0}^\infty  t^i \alpha_i$, with $\mu_i:A\otimes A\rightarrow A$ and $\alpha_i:A\rightarrow A$  $\K$-linear maps, such that $ a \star b =a b  \mod t$ for all $a,b  \in A$ and $\alpha_\star = \alpha \mod t$. 
\end{defn}
	
In the following we set  $\alpha_0 = \alpha$ and $\mu(a,b) =\mu_0(a,b) = ab$. We say that a deformation is of order $n$ if $a \star b =  \sum_{i=0}^n t^i \mu_i(a,b)$ and $\alpha_\star = \sum_{i=0}^n  t^i \alpha_i$ such that $\mu_n$ or $\alpha_n$ is nonzero.

\begin{defn}
	Two deformations $(A\ph,\star, \alpha_\star)$ and $(A\ph,\star', \alpha'_\star)$ of a Hom-associative algebra $A$ are said equivalent if there exists a  $\K\ph$-linear map $T$ on $A\ph$ of the form $T = \id + \sum_{i=0}^\infty T_i t^i$, such that for all $a,b \in A$, we have  $$T(a \star' b) = T(a) \star T(b) \text{ and } T(\alpha_\star'(a)) = \alpha_\star(T(a)).$$ 
\end{defn}
	
Given a deformation of $A$ and an isomorphism $T$ as above, one can also define an equivalent deformation by $a \star' b = T^{-1}(T(a) \star T(b))$ and $\alpha_\star'(a) = T^{-1}(\alpha_\star(T(a)))$. It is clear that  equivalence of deformations  is an equivalence relation on the set of deformations of a Hom-associative algebra.

Let $(A\ph, \star, \alpha_\star)$ be a deformation of the Hom-associative algebra $(A,\mu,\alpha)$, where $a \star b = \mu_\star(a,b) = \sum_{i=0}^\infty t^i \mu_i(a,b)$ and $\alpha_\star = \sum_{i=0}^\infty  t^i \alpha_i$. 
	
The Hom-associativity condition $\mu_\star(\mu_\star(a,b),\alpha_\star (c)-\mu_\star(\alpha_\star (a),\mu_\star(b,c))$ can be written as 
\begin{equation}
	\sum_{i,j,k =0 }^\infty t^{i+j+k}(\mu_i(\alpha_j(a), \mu_k(b,c)) -  \mu_i(\mu_k(a,b), \alpha_j(c)) )= 0
\end{equation}
and is equivalent to an infinite system of equations, called deformation equation with respect to Hom-associativity,   such that the $n^{th}$ equation is of the form
$$ \sum_{\substack{i,j,k\geq 0 \\ i+j+k=n}}  \mu_i(\alpha_j(a), \mu_k(b,c)) -  \mu_i(\mu_k(a,b), \alpha_j(c) ) =0.$$
Notice that the $0^{th}$ equation expresses the Hom-associativity of $A$.
	
Now, rearranging the terms and using coboundary operators from Section \ref{Section3}, one may write the previous equation as 	
\begin{equation}
	\begin{split}
		(\delmm \mu_n + \delam \alpha_n)(a,b,c) &= 
		\sum_{\substack{i,j,k=0,\dots, n-1 \\ i+j+k=n}}  \mu_i(\alpha_j(a), \mu_k(b,c)) -  \mu_i(\mu_k(a,b), \alpha_j(c) ), \label{eq:assdef}
	\end{split}	
\end{equation}
where  $   \delmm \mu_n(a,b,c)= \alpha_0(a) \mu_n(b,c)-\mu_n(ab,\alpha_0(c)) +\mu_n(\alpha_0(a), bc) -   \mu_n(a,b)\alpha_0(c)  $ and  $\delam \alpha_n(a,b,c)= \alpha_n(a) (bc) - (ab)\alpha_n(c) $.

Similarly the multiplicativity condition $\alpha_\star( \mu_\star(a,b)) = \mu_\star(\alpha_\star(a), \alpha_\star(b))$ can be written as 
\begin{equation}
	\sum_{i,j =0}^{\infty} t^{i+j}\alpha_i(\mu_j(a,b)) - 	\sum_{i,j,k =0}^{\infty} t^{i+j+k} \mu_i(\alpha_j(a), \alpha_k(b)) =0.
\end{equation}
This again is equivalent to an infinite system of equations, called deformation equation with respect to multiplicativity, with the $n^{th}$ equation given by
\begin{equation}
	\sum_{\substack{i,j =0,\dots,n \\ i+j =n}} \alpha_i(\mu_j(a,b)) -  \sum_{\substack{i,j,k =0,\dots,n \\ i+j+k =n}}  \mu_i(\alpha_j(a), \alpha_k(b)) =0,
\end{equation}
which can be rearranged to 
\begin{equation}
	\begin{split}
		(\delaa  \alpha_n - \delam \mu_n)(a,b) 
		 = \sum_{\substack{i,j =0,\dots,n-1 \\ i+j =n}} \alpha_i(\mu_j(a,b)) -  \sum_{\substack{i,j,k =0,\dots,n-1 \\ i+j+k =n}}  \mu_i(\alpha_j(a), \alpha_k(b)) , \label{eq:multdef}
	\end{split}	
\end{equation}
where $\delaa  \alpha_n(a,b)= \alpha_0(a) \alpha(b) - \alpha_n(ab) - \alpha_n(a) \alpha_0(b) $ and  $ \delam \mu_n(a,b)=\alpha_0(\mu_n(a,b)) - \mu_n(\alpha_0(a),\alpha_0(b))$.

We denote by $R_n^1$ the right hand side of equation \eqref{eq:assdef} and $R_n^2$ the right hand side of equation \eqref{eq:multdef}. The pair $(R_n^1,R_n^2)$ is called the $n^{th}$ obstruction.

Since the deformation is governed by a $L_\infty$-algebra, we have the usual statement relating deformations and cohomology.

\begin{theorem}
	Let $(A,\mu,\alpha)$ be a Hom-associative algebra and $(A\ph,\star,\alpha_\star)$ be a deformation of $A$.  Then we have 
	\begin{enumerate}
	\item  The first order term of the deformation  is a 2-cocycle, i.e. we have 
	$\del( \mu_1, \alpha_1) = 0$,  and its cohomology class is invariant under equivalence. 
	\item The $n^{th}$ deformation equations with respect to Hom-associativity and multiplicativity respectively,  are equivalent to  $\del(\mu_n,\alpha_n) = (R^1_n,R^2_n) $. Moreover,  $(R^1_n,R^2_n)$ is a 3-cocycle, i.e. $\del (R^1_n,R^2_n) = 0$. 
	\end{enumerate}
\end{theorem}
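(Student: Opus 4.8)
The plan is to handle both items at once by noticing that the cocycle assertion of item (1) is just the case $n=1$ of the obstruction formula in item (2). Indeed, the sums defining the right-hand sides of \eqref{eq:assdef} and \eqref{eq:multdef} run over indices $i,j,k$ strictly below $n$ that nevertheless sum to $n$; for $n=1$ these index sets are empty, so $R^1_1 = R^2_1 = 0$ and the first-order deformation equations collapse to $\del(\mu_1,\alpha_1) = 0$. It therefore remains to prove item (2) and, separately, the invariance statement in item (1).

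For the invariance I would expand an equivalence $T = \id + \sum_{i\geq 1} t^i T_i$ relating $(A\ph,\star',\alpha'_\star)$ and $(A\ph,\star,\alpha_\star)$ to first order. Reading off the coefficient of $t$ in $T(a\star' b)=T(a)\star T(b)$ gives $\mu'_1(a,b)-\mu_1(a,b)= a\,T_1(b)+T_1(a)\,b-T_1(ab)=\delmm T_1(a,b)$, while the coefficient of $t$ in $T(\alpha'_\star(a))=\alpha_\star(T(a))$ gives $\alpha'_1(a)-\alpha_1(a)=\alpha(T_1(a))-T_1(\alpha(a))=\delma T_1(a)$. Hence $(\mu'_1,\alpha'_1)-(\mu_1,\alpha_1)=(\delmm T_1,\delma T_1)=\del(T_1,0)$ is a coboundary, so equivalent deformations have the same first-order class in $\aHH^2(A)$.

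For item (2), the equivalence of the $n$th deformation equations with $\del(\mu_n,\alpha_n)=(R^1_n,R^2_n)$ is obtained by isolating, in the order-$n$ parts of the Hom-associativity and multiplicativity conditions, the terms linear in the top data $\mu_n,\alpha_n$: these assemble precisely into $\delmm\mu_n-\delam\alpha_n$ and $\delma\mu_n-\delaa\alpha_n$, the two components of $\del(\mu_n,\alpha_n)$, while the remaining terms, built only from $\mu_i,\alpha_i$ with $i<n$, are by definition $R^1_n$ and $R^2_n$; this is exactly the rearrangement recorded in \eqref{eq:assdef} and \eqref{eq:multdef}. The substantive part is that $(R^1_n,R^2_n)$ is $\del$-closed, and here I would invoke the $L_\infty$-picture of \Cref{Section3.2}: a deformation is a Maurer--Cartan element, $(R^1_n,R^2_n)$ is the order-$n$ component of the Maurer--Cartan map applied to the truncated data, and the relation $\bar l^2=0$ forces each such obstruction to be closed once all lower-order equations $\del(\mu_m,\alpha_m)=(R^1_m,R^2_m)$, $m<n$, hold. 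Only brackets taking values in degrees up to $2$---precisely those listed in \Cref{Section3.2}---enter, so the partial knowledge of the $L_\infty$-structure is enough.

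The main obstacle is making this cocycle claim self-contained without relying on the still-conjectural full $L_\infty$-algebra. If a direct argument is wanted, I would compute $\del(R^1_n,R^2_n)=(\delmm R^1_n-\delam R^2_n,\ \delma R^1_n-\delaa R^2_n)$ from the explicit expressions of $R^1_n,R^2_n$ as sums of compositions of the $\mu_i,\alpha_i$, and show that every term cancels after repeated use of Hom-associativity, multiplicativity, and the lower-order deformation equations---the exact analogue of Gerstenhaber's classical computation. As in the proof of \Cref{th:delalg}, this is long and bookkeeping-intensive rather than conceptually hard; the delicate point is keeping track of how the structure map $\alpha$ threads through the two mixed differentials $\delma$ and $\delam$, which is what distinguishes this $\alpha$-type setting from the ordinary Hochschild case.
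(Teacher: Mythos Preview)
Your proposal is correct and follows essentially the same route as the paper: you reduce item (1) to the $n=1$ case of item (2), expand an equivalence to first order to get $(\mu'_1,\alpha'_1)-(\mu_1,\alpha_1)=\del(T_1,0)$, and for the obstruction cocycle you invoke the partial $L_\infty$-structure of \Cref{Section3.2} via the identity $l\bigl(\E^{\nu_\star}\,l(\E^{\nu_\star})\bigr)=0$, exactly as the paper does. Your added remark that only brackets with values in degree at most $2$ are needed, so that the computed low-degree part of the (otherwise conjectural) $L_\infty$-algebra suffices, and your mention of the alternative Gerstenhaber-style direct computation, are both apt; the paper leaves these points implicit.
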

\begin{proof}
	The equation $\del(\mu_n,\alpha_n) = (R^1_n,R^2_n) $ is equivalent to \cref{eq:multdef,eq:assdef}.
	For $n=1$ \cref{eq:assdef}  gives $(\delmm \mu_n + \delam \alpha_n)(a,b,c) = 0$ and \cref{eq:multdef} gives  $(\delaa  \alpha_n - \delam \mu_n)(a,b)=0$, since obviously the right hand side vanishes.  This means we have $\del(\mu_1,\alpha_1) = 0$.  
	
	For an equivalent deformation $(\star',\alpha_\star')$, we get from $S(a \star b )= S(a) \star S(b)$ in first order that $\mu'_1(a,b) +S_1(ab) = S_1(a) b + a S_1(b) + \mu_1(a,b)$. Further from $ S( \alpha'_\star(a)) = \alpha_\star(S)(a)$, we get $S_1(\alpha_0(a)) + \alpha'_1(a) = \alpha_1(a) + \alpha_0 S_1(a)$. Rearranging  gives $(\mu'_1 - \mu_1, \alpha'_1 -\alpha_1 ) =  \del ( S,0)$. So the cohomology class of $(\mu_1,\alpha_1)$ and $(\mu'_1,\alpha'_1)$ are the same as claimed.

	Using the $L_\infty$-structure $l$ defined in Section \ref{Section3.2}, we have $ \del (\mu_n,\alpha_n)  -R_n  = l(\E^{(\mu_\star,\alpha_\star)})$ at order $n$ in $t$, since  both are equivalent to the fact that $(\mu_\star,\alpha_\star)$ is a Hom-associative algebra.  Also $l(\E^{(\mu_\star,\alpha_\star)})$ vanishes up to order $n-1$ in $t$. In the following we write $\nu_\star$  for $(\mu_\star,\alpha_\star)$ for shortness.
	Since $l$ is a $L_\infty$-structure, it satisfies $l(\E^{\nu_\star} l(\E^{\nu_\star})) =0$ and since $\nu_\star$ is a Maurer-Cartan  element  up to order $n-1$ it satisfies $l(\E^{\nu_\star}) =0 \mod t^n$. So the $n^{th}$ order of $(\E^{\nu_\star} l(\E^{\nu_\star}))$ is given by $l(\E^{\nu_0}(\del\nu_n- R_n )) = \del\del \nu_n - \del R_n = \del R_n $ and has to vanish as claimed.
\end{proof}

\begin{corollary}
	If $\aHH^3(A) = \{0\}$, every deformation up to order $n$ can be extended to a full deformation. So especially for every 2-cocycle there exists a deformation, with it as first order term.
\end{corollary}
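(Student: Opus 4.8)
The plan is to prove both assertions by the standard inductive obstruction argument, using the previous theorem to control the obstruction at each order. Suppose we are given a deformation of order $n$, that is, maps $\mu_0,\dots,\mu_n$ and $\alpha_0,\dots,\alpha_n$ satisfying \cref{eq:assdef,eq:multdef} for all orders up to $n$. I want to show it extends to order $n+1$, and then iterate.

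First I would invoke part (2) of the previous theorem. The obstruction $(R^1_{n+1},R^2_{n+1})$ is built entirely from the already-chosen lower-order data $\mu_0,\dots,\mu_n,\alpha_0,\dots,\alpha_n$, and the theorem guarantees that it is a $3$-cocycle, $\del(R^1_{n+1},R^2_{n+1})=0$. Since by hypothesis $\aHH^3(A)=\{0\}$, every $3$-cocycle is a coboundary, so there is a $2$-cochain $(\mu_{n+1},\alpha_{n+1})$, with $\mu_{n+1}\in\Hom(A^{\otimes 2},A)$ and $\alpha_{n+1}\in\Hom(A,A)$, such that
\begin{equation*}
  \del(\mu_{n+1},\alpha_{n+1}) = (R^1_{n+1},R^2_{n+1}).
\end{equation*}
By part (2) of the previous theorem this single equation is exactly equivalent to the pair of $(n+1)$-th deformation equations, so the chosen $\mu_{n+1},\alpha_{n+1}$ extend the given deformation to a deformation of order $n+1$. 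Iterating this step, the resulting order-$(n+1)$ deformation has its own obstruction $(R^1_{n+2},R^2_{n+2})$, which is again a $3$-cocycle and hence again a coboundary; solving $\del(\mu_{n+2},\alpha_{n+2})=(R^1_{n+2},R^2_{n+2})$ extends to order $n+2$. Proceeding by induction produces maps $\mu_i,\alpha_i$ for all $i$, and hence a full formal deformation $a\star b=\sum_i t^i\mu_i(a,b)$, $\alpha_\star=\sum_i t^i\alpha_i$ satisfying all deformation equations.

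For the final claim, let $(\mu_1,\alpha_1)$ be a $2$-cocycle. Setting $\mu_0=\mu$, $\alpha_0=\alpha$ and taking $\mu_1,\alpha_1$ as first-order terms gives a deformation of order $1$: indeed the order-$1$ right-hand sides $(R^1_1,R^2_1)$ vanish, so the order-$1$ deformation equations reduce precisely to $\del(\mu_1,\alpha_1)=0$, which holds by assumption. Applying the first part with $n=1$ extends this to a full deformation whose first-order term is $(\mu_1,\alpha_1)$.

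The argument has no genuinely hard step once the previous theorem is available: the whole weight of the statement (that the obstruction is always a $3$-cocycle, via the $L_\infty$-structure) has already been discharged there. The only points that require care are that the obstruction depends only on the lower-order data, so that it is fully determined \emph{before} $\mu_{n+1},\alpha_{n+1}$ are chosen, and that solving the single coboundary equation $\del(\mu_{n+1},\alpha_{n+1})=(R^1_{n+1},R^2_{n+1})$ simultaneously encodes both the Hom-associativity and the multiplicativity deformation equations---this is exactly what is ensured by the packaging of the four differentials $\delmm,\delaa,\delma,\delam$ into the single map $\del$ together with part (2) of the theorem.
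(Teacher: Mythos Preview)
Your proposal is correct and follows essentially the same approach as the paper: invoke the previous theorem to see that the order-$(n+1)$ obstruction is a $3$-cocycle, use $\aHH^3(A)=\{0\}$ to write it as $\del(\mu_{n+1},\alpha_{n+1})$, iterate, and for the second claim note that a $2$-cocycle gives a first-order deformation. Your write-up is in fact more explicit than the paper's terse version, spelling out why the obstruction depends only on lower-order data and why solving the single coboundary equation handles both deformation equations simultaneously.
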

\begin{proof}
	We use the notation of the previous theorem. If $\mu_{(n)} = \sum_{i=0}^n \mu_i t^i, \alpha_{(n)}  \sum_{i=0}^n \alpha_i t^i$ is a deformation up to order $n$, the deformation equation $\del(\mu_{n+1},\alpha_{n+1}) = R_{n+1}$ can be solved, since $\aHH^3(A) = \{0\}$  and $(\mu_{(n)} + \mu_{n+1} t^{n+1}, \alpha_{(n)} + \alpha_{n+1} t^{n+1}$ is a deformation up to order $n+1$.  Continuing this for each order, gives a full deformation.  If $(\mu_1, \alpha_1)$ is a 2-cocycle $(\mu_0 + \mu_1 t, \alpha_0 +\alpha_1 t)$ is a deformation up to order $1$, so according to previous observation,  it can be extended to a deformation of $A$. 
\end{proof}

\begin{prop}
	If two deformations $(\mu_\star, \alpha_\star)$ and $(\mu'_\star,\alpha'_\star)$ have the same terms up to order $n-1$, then we have $\del(\mu_n - \mu'_n , \alpha_n - \alpha'_n) =0$ and there exists an equivalence up to order $n$ if there exists a linear map $S_n: A \to A$ such that $\del(S_n,0)= (\mu_n - \mu'_n , \alpha_n - \alpha'_n)$.
\end{prop}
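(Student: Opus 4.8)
The plan is to prove the two assertions separately, both by exploiting the $n$-th deformation equations \eqref{eq:assdef} and \eqref{eq:multdef} established earlier in this section.

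For the cocycle claim I would start from the fact that each of the two deformations satisfies its own $n$-th deformation equations, which by the preceding theorem read $\del(\mu_n,\alpha_n) = (R_n^1,R_n^2)$ for the first deformation and the analogous identity for the second. The essential observation is that the obstruction $(R_n^1,R_n^2)$ — the right-hand sides of \eqref{eq:assdef} and \eqref{eq:multdef} — is assembled only from the coefficients $\mu_i,\alpha_j,\mu_k$ with $i+j+k=n$ and all indices strictly smaller than $n$, hence from data of order at most $n-1$. Since by hypothesis the two deformations agree up to order $n-1$, these lower-order coefficients coincide, so both deformations produce exactly the same right-hand side $(R_n^1,R_n^2)$. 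Subtracting the two deformation equations and using linearity of $\del$ then yields $\del(\mu_n - \mu'_n,\alpha_n - \alpha'_n) = 0$, as claimed.

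For the equivalence claim I would exhibit an explicit candidate transformation. Because the two deformations already coincide below order $n$, it suffices to search for an equivalence of the special form $T = \id + S_n t^n$, with all intermediate correction terms set to zero; such a $T$ is automatically the identity modulo $t^n$, so the equivalence conditions at every order below $n$ hold trivially, and only the congruence modulo $t^{n+1}$ remains to be arranged. Expanding the two equivalence conditions $T(a \star' b) = T(a)\star T(b)$ and $T(\alpha'_\star(a)) = \alpha_\star(T(a))$ and collecting the coefficient of $t^n$, one finds after rearranging that the order-$n$ requirement is precisely $\mu'_n - \mu_n = \delmm S_n$ together with $\alpha'_n - \alpha_n = \delma S_n$. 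These two identities are exactly the two components of $\del(S_n,0) = (\delmm S_n,\delma S_n)$, so the sought equivalence up to order $n$ exists precisely when $\del(S_n,0)$ equals the difference of the $n$-th order terms; the overall sign is irrelevant, since replacing $S_n$ by $-S_n$ switches it and $\del$ is linear. This is just the order-$n$ analogue of the first-order calculation already carried out in the proof of the preceding theorem.

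The main difficulty here is not any single hard step but the bookkeeping: one must make precise what \emph{equivalence up to order $n$} means (congruence of the transported product and structure map modulo $t^{n+1}$) and verify carefully that, with $T = \id + S_n t^n$, all equivalence conditions below order $n$ are satisfied automatically because the two deformations are assumed equal there while $T$ contributes nothing below order $n$. Once this setup is in place, the identification of the $t^n$-coefficient with $\delmm S_n$ and $\delma S_n$ is the same routine expansion used at first order, so no genuinely new obstacle arises.
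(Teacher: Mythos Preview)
Your proposal is correct. The equivalence part matches the paper's argument essentially verbatim: the paper also takes $S=\id+S_nt^n$, expands $S(a\star' b)$ and $S(a)\star S(b)$ modulo $t^{n+1}$, and reads off $\delmm S_n=\mu'_n-\mu_n$ and $\delma S_n=\alpha'_n-\alpha_n$.

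For the cocycle claim, however, you take a genuinely different route. The paper invokes the $L_\infty$-structure $l$ from \Cref{Section3.2}: it manipulates $l\big((\E^{-\nu_\star}-1)(\E^{\nu_\star-\nu'_\star}-1)\big)$ to isolate the order-$n$ contribution and identify it with $\del(\nu_n-\nu'_n)$. Your argument is more elementary and self-contained: you simply note that the obstruction $(R_n^1,R_n^2)$ is built from coefficients of order $\le n-1$, which by hypothesis coincide for the two deformations, and then subtract the two instances of $\del(\mu_n,\alpha_n)=(R_n^1,R_n^2)$. This avoids relying on the $L_\infty$-structure, which in the paper is only constructed in low degrees and otherwise left conjectural; in that sense your proof is on firmer ground. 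The $L_\infty$ approach, on the other hand, is more conceptual and generalizes uniformly once the structure is in place.
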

\begin{proof}
 Let $S = \id + S_{n} t^n$, we have 
	\begin{align*}
		S(a {\star'} b)  &= \sum_{i=0}^n \mu'_i(a,b) t^i + S_n( ab) t^n  \mod t^{n+1} \text{   and }\\
		S(a) \star S(b) &= \sum_{i=0}^n \mu'_i(a,b) t^i + (S_n(a) b +  a S_n(b)) t^n  \mod t^{n+1}.
	\end{align*}
	So $S$ is an equivalence up to order $n$ if 
	\begin{align*}
		S_n(ab) + \mu'_n(ab) =S_n(a) b +  a S_n(b) + \mu_n(ab),
	\end{align*}
	which can be written as $\delmm S_n = \mu'_n - \mu_n$.  
	Similarly from $\alpha S - S \alpha' $  in order $n$, we get 
	\begin{align*}
		 \alpha_n  + \alpha_0 S_n - S_n -\alpha'_n 
		= \delma S_n   + \alpha_n - \alpha'_n =0.                                
	\end{align*}
	So $S =1+ S_n t^n$ is an equivalence up to order $n$.
										
	Using the $L_\infty$-structure, we calculate with $\nu_\star = (\mu_\star, \alpha_\star)$
	\begin{align*}
		l( (\E^{-{\nu_\star}}-1) (\E^{{\nu_\star} - {\nu'_\star}} -1) ) & = l(\E^{-{\nu_\star} } \E^{{\nu_\star} - {\nu'_\star}}) - l(\E^{-{\nu_\star}}) + l(\E^{{\nu_\star} - \tilde {\nu_\star}}) + l(1) \\
		                                               & = l(\E^{\nu_\star}) - l(\E^{{\nu_\star} - {\nu'_\star}})  = -l(\E^{{\nu_\star} - {\nu'_\star}}) .                   
	\end{align*}
	Since $\nu$ and $\nu'$ equal up to order $n-1$ the last term vanishes up to order $2(n-1)$, so especially in order $n$. The first  term in order $n$ equals  $l(\E^{\nu_0} (\nu_n - \nu'_n) )= \del(\nu_n - \mu'_n) $.
\end{proof}

\begin{corollary}
	Every deformation is equivalent to one of the form $\mu_\star = \mu_0 + \sum_{i=k}^\infty \mu_i t^i$ and $\alpha_\star = \sum_{i=k}^\infty \alpha_i t^i$, such that $(\mu_k,\alpha_k)$ is a 2-cocycle, which is not a coboundary. 
	
	If $\aHH^2(A) = \{0\}$ every deformation is equivalent to the undeformed algebra.
\end{corollary}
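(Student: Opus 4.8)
The plan is to argue order by order, combining the preceding Proposition---which, given that a leading discrepancy is a coboundary, produces an equivalence removing it---with the observation that the leading obstruction always vanishes. First I would let $k\geq 1$ be the smallest index with $(\mu_k,\alpha_k)\neq(0,0)$; if no such $k$ exists the deformation is already the undeformed algebra and there is nothing to prove. I would then check that the right-hand sides $R^1_k$ and $R^2_k$ of \cref{eq:assdef,eq:multdef} vanish at this order. Indeed, every summand there is indexed by $i+j+l=k$ with all indices strictly below $k$, so at least two of $i,j,l$ are positive; consequently each summand carries a factor among $\mu_1,\dots,\mu_{k-1},\alpha_1,\dots,\alpha_{k-1}$, all of which vanish by minimality of $k$. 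Hence the $k$-th deformation equation collapses to $\del(\mu_k,\alpha_k)=0$, so that $(\mu_k,\alpha_k)$ is a $2$-cocycle.

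If $(\mu_k,\alpha_k)$ is not a coboundary we are already in the desired normal form. Otherwise $(\mu_k,\alpha_k)=\del(S_k,0)$ for some $S_k\in\Hom(A,A)$, since $\aHC^1(A)=\Hom(A,A)\oplus\{0\}$ forces every $2$-coboundary to be of the shape $(\delmm S_k,\delma S_k)$. Comparing $(\mu_\star,\alpha_\star)$ with the trivial deformation $(\mu_0,\alpha_0)$, which agrees with it through order $k-1$, the preceding Proposition furnishes the equivalence $T=\id+S_k t^k$; pushing $(\mu_\star,\alpha_\star)$ forward by $T$ yields an equivalent deformation all of whose terms of order $1,\dots,k$ vanish. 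Its first nonzero term now lives in some order $k'>k$ and, by the very same obstruction argument, is again a $2$-cocycle. Iterating, I would either reach a deformation whose leading term is a non-coboundary $2$-cocycle---which proves the first assertion---or generate an infinite sequence of equivalences $\id+S_{k_1}t^{k_1},\id+S_{k_2}t^{k_2},\dots$ with $k_1<k_2<\cdots$.

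In the latter case I would note that, because $k_j\to\infty$, the composite $T=\cdots(\id+S_{k_2}t^{k_2})(\id+S_{k_1}t^{k_1})$ converges $t$-adically to a genuine $\K\ph$-linear automorphism of $A\ph$ of the form $\id+\sum_{i\geq k_1}T_i t^i$, and the deformation it produces has no nonzero deformation terms, i.e. it is the undeformed algebra. This settles the dichotomy of the first statement. The second assertion then follows at once: if $\aHH^2(A)=\{0\}$ then every $2$-cocycle is a coboundary, so the leading term can be stripped off at every stage, the process never halts at a non-coboundary, and the limiting equivalence exhibits the deformation as equivalent to the undeformed algebra.

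The step I expect to demand the most care is the convergence and well-definedness of the infinite composite of equivalences in the trivial case: one must verify that applying $\id+S_{k_j}t^{k_j}$ only alters coefficients of order $\geq k_j$, so that each coefficient of the transformed product stabilizes after finitely many factors, and that the resulting $T$ is an equivalence in the sense of the definition. The remaining ingredients---the vanishing of the leading obstruction and the single-order removal of a coboundary---are precisely the content already packaged in the preceding Proposition and in the deformation equations \cref{eq:assdef,eq:multdef}.
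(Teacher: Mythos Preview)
Your argument is correct and follows essentially the same route as the paper's proof: identify the lowest nonvanishing term, note it is a $2$-cocycle, strip it off via the preceding Proposition when it is a coboundary, and iterate. You are in fact more careful than the paper in two places---you explicitly verify the vanishing of the obstructions $R^1_k,R^2_k$ at the leading order, and you address the $t$-adic convergence of the infinite composite of equivalences in the rigid case, both of which the paper passes over in silence.
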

\begin{proof}
 If  $\mu_\star = \mu_0 + \sum_{i=k}^\infty \mu_i t^i$ and $\alpha_\star = \sum_{i=k}^\infty \alpha_i t^i$ is a deformation of $A$, then $(\mu_k,\alpha_k)$ is a 2-cocycle. If it is a  2-coboundary, by the previous proposition, there exists a equivalent deformation of the form  $(\mu_\star = \mu_0 + \sum_{i=k+1}^\infty \mu_i t^i, \alpha_\star = \alpha_0+\sum_{i=k+1}^\infty \alpha_i t^i)$, where the first non-trivial term is in order $k+1$. Repeating this if necessary, we arrive at a deformation such that the first nontrivial term is not a 2-coboundary. If $\aHH^2(A) = \{0\}$ every 2-cocylce is coboundary and the second statement is clear.
\end{proof}

Now, we define Hom-Poisson algebras and study their relationships to deformations of commutative Hom-associative algebras. 
	
\begin{defn}
	A Hom-Poisson algebra is a quatruple $(P,\mu,\p{\cdot,\cdot},\alpha)$, such that   $(P,\mu,\alpha)$ is a commutative Hom-associative algebra and $\p{\cdot,\cdot} : A \times A \to A$ is a skewsymmetric bilinear map, called Hom-Poisson bracket, such that 
	\begin{align}
		\p{\alpha( a),\p{b,c}} & = \p{\p{a,b},\alpha(c)} +\p{\alpha(b),\p{a,c}} &   & \text{(Hom-Jacobi identity),} \label{eq:homjacobi} \\
		\p{\alpha(a),bc}       & = \alpha(b)\p{a,c} + \p{a,b} \alpha(c)         &   & \text{(Hom-Leibniz identity),} \label{HLeibIdentity}\\ 
		\alpha(\p{a,b})        & = \p{\alpha(a),\alpha(b)}                      &   & \text{(multiplicativity),}     
	\end{align}
	for all $a,b,c$ in $A$.
\end{defn}

So a Hom-Poisson algebra is a Hom-Lie  and a commutative Hom-associative algebra, such  that the two operations are compatible.

A derivation of a Hom-Poisson algebra is a linear map  $\phi$ which is a  derivation with respect to the multiplication $\mu$ and  also a derivation with respect to  the Hom-Poisson bracket, i.e. it satisfies $\phi(ab) = \phi(a) b+ a\phi(b)$ and $\phi(\p{a,b}) = \p{\phi(a), b} + \p{a, \phi(b)}$. \\ An $\alpha$-derivation of a Hom-Poisson algebra  is a linear map which an $\alpha$-derivation for both the multiplication and   the Hom-Poisson bracket, i.e. it satisfies 
$\phi(ab) = \phi(a) \alpha(b) + \alpha(a) \phi(b)$
and $\phi(\p{a,b}) = \p{\phi(a), \alpha(b)} + \p{\alpha(a), \phi(b)}$.\\

We define  the star commutator
\begin{equation}
	[a,b]_\star = a \star b -b \star a.
\end{equation}
It is easy to see that it satisfies the Hom-Jacobi identity \eqref{eq:homjacobi} and the Hom-Leibniz identity \eqref{HLeibIdentity}. The following propositions shows that if $A$ is commutative its first order terms give rise to  a Hom-Poisson algebra on $A$.

\begin{prop}\label{th:assdef}
	Let $A$ be a commutative Hom-associative algebra and $A_\star$ be a deformation  of $A$. Then $\p{a,b} = \frac{1}{2 t}( a \star b - b \star a )\mod t$ defines a Hom-Poisson algebra on $A$
\end{prop}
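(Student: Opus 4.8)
The plan is to read off the Hom-Poisson structure as the lowest-order term of the star-commutator $[a,b]_\star = a\star b - b \star a$, exploiting the fact that the commutativity of $\mu_0 = \mu$ kills the zeroth order. Concretely, since $a\star b - b\star a = \sum_{i\geq 0} t^i(\mu_i(a,b)-\mu_i(b,a))$ and $\mu_0(a,b)=ab=ba=\mu_0(b,a)$, the bracket lies in $tA\ph$, so $\{a,b\} = \frac{1}{2t}(a\star b - b\star a)\bmod t = \frac12(\mu_1(a,b)-\mu_1(b,a))$ is a well-defined bilinear, manifestly skew-symmetric map; equivalently $[a,b]_\star = 2t\{a,b\} + O(t^2)$. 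The commutative Hom-associative algebra underlying the Poisson structure is just the zeroth order $(A,\mu_0,\alpha_0)=(A,\mu,\alpha)$, so nothing needs checking there. It remains to verify the Hom-Jacobi identity \eqref{eq:homjacobi}, the Hom-Leibniz identity \eqref{HLeibIdentity}, and multiplicativity for $\{\cdot,\cdot\}$.

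The key input is that on the deformation $(A\ph,\star,\alpha_\star)$ the star-commutator itself obeys the exact analogues of all three identities. For multiplicativity this is immediate from multiplicativity of $\alpha_\star$ with respect to $\star$, giving $\alpha_\star([x,y]_\star)=[\alpha_\star(x),\alpha_\star(y)]_\star$. For the Leibniz rule I would first record the identity $[\alpha_\star(x), y\star z]_\star = \alpha_\star(y)\star[x,z]_\star + [x,y]_\star\star\alpha_\star(z)$, which follows by expanding the right-hand side and repeatedly applying Hom-associativity $\alpha_\star(x)\star(y\star z)=(x\star y)\star\alpha_\star(z)$ to the four products; the four resulting terms collapse to $\alpha_\star(x)\star(y\star z)-(y\star z)\star\alpha_\star(x)$. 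The Hom-Jacobi identity for $[\cdot,\cdot]_\star$ is the standard statement that a (multiplicative) Hom-associative algebra is a Hom-Lie algebra under its commutator, as already noted in the text.

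I would then extract the appropriate order in $t$ from each identity, using commutativity of $\mu_0$ to annihilate the unwanted lower-order contributions. Applying $\alpha_\star([a,b]_\star)=[\alpha_\star(a),\alpha_\star(b)]_\star$: both sides start at order $t$, the left side being $2t\,\alpha(\{a,b\})+O(t^2)$ and the right side $2t\,\{\alpha(a),\alpha(b)\}+O(t^2)$ (the $\mu_0$-commutator terms of $\alpha_p(a)$ against $\alpha_q(b)$ vanish by commutativity), so the $t^1$-coefficient yields $\alpha(\{a,b\})=\{\alpha(a),\alpha(b)\}$. The Leibniz identity in $A\ph$ likewise starts at order $t$: the left side gives $2t\,\{\alpha(a),bc\}+O(t^2)$ and the right side $2t\big(\alpha(b)\{a,c\}+\{a,b\}\alpha(c)\big)+O(t^2)$, producing \eqref{HLeibIdentity} at order $t^1$. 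The main obstacle is the Hom-Jacobi identity, which is genuinely second order: each double bracket $[\alpha_\star(a),[b,c]_\star]_\star$ has its $t^1$-coefficient killed by commutativity (the outer bracket of an order-$0$ term with $\mu_0$ is symmetric), so one must pass to the $t^2$-coefficient. There the only surviving contribution comes from one outer $\mu_1$ applied to the inner $\{b,c\}$, all mixed terms involving $\alpha_1$ or $\mu_2$ dropping out by commutativity; this gives $4t^2\{\alpha(a),\{b,c\}\}+O(t^3)$, and matching the analogous $t^2$-coefficients of $[[a,b]_\star,\alpha_\star(c)]_\star+[\alpha_\star(b),[a,c]_\star]_\star$ yields \eqref{eq:homjacobi} after dividing by $4$. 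The careful bookkeeping of which order isolates each axiom, together with the systematic vanishing of the $\mu_0$-commutators, is the only delicate point; everything else is routine.
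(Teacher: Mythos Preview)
Your proposal is correct and follows essentially the same route as the paper: both use that the star-commutator $[a,b]_\star$ satisfies the Hom-Jacobi and Hom-Leibniz identities and is multiplicative, and then extract the order-$t^2$, order-$t$, and order-$t$ coefficients respectively, with commutativity of $\mu_0$ killing the lower orders. Your write-up is in fact somewhat more explicit than the paper's in justifying why only the $4t^2\{\alpha(a),\{b,c\}\}$ term survives at second order.
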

\begin{proof}
	The bracket is obviously skewsymmetric.
	We have $[a,b]_\star = ab -ba  + t( \mu_1(a,b) - \mu_1(b,a)) + \O(t^2) = 2 t \p{a,b}  + \O(t^2)$.
	In second order we have 
	\begin{align*}
		[[a,b]_\star , \alpha_\star(c)]_\star = 4\p{ \p{a,b} ,\alpha(c)}. 
	\end{align*}
	Taking the cycling sum over this gives the  Hom-Jacobi identity.
	For the Hom-Leibniz identity we consider the Hom-Leibniz identity for the star commutator in first order this is 
	\begin{align}
		[ab,\alpha_\star(c)]                        & = 2\p{ab,\alpha(c)}                       \\
		\alpha_\star(a)[b,c] + [a,c]\alpha_\star(b) & = 2\alpha(a)\p{b,c} + 2\p{a,c} \alpha(b), 
	\end{align}
	which gives the desired identity. 
	Next we want to show that the  Poisson bracket is multiplicative with respect to $\alpha_0$. This follows from 
	\begin{equation} \label{eq:ser1}
		\begin{split}
			\alpha_\star([a,b]) &=  t \alpha_0(2\p{a,b}) +  t^2 \alpha_1(2\p{a,b}) + \alpha_2(\mu_2(a,b) - \mu_2(b,a))+ \O(t^3) \\
			[\alpha_\star(a), \alpha_\star(b)] &= t 2\p{\alpha_0(a),\alpha_0(b)} + t^2 (2\p{\alpha_1(a),\alpha_0 (b)} + 2\p{\alpha_0(a),\alpha_1(b)}  \\ &+  \mu_2(\alpha_0(a), \alpha_0(b) )  -  \mu_2(\alpha_0(b), \alpha_0(a) ) +  \O(t^3)
		\end{split}
	\end{equation}if one compares the first order terms.

\end{proof}
Dually for the deformation of a cocommutative Hom-coassociative algebra, $\delta(x) = \frac{1}{ 2 t} (\Delta(x) - \Delta^\opp(x)) \mod t$ defines a Hom-Poisson coalgebra.
	
As in the associative case the Poisson bracket is invariant under  equivalence and also here we have 
\begin{prop}
	Given two equivalent deformations of a commutative Hom-associative algebra $A$, the  corresponding Hom-Poisson brackets  are the same.
\end{prop}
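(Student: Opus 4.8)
The plan is to reduce both brackets to the skew parts of the first-order multiplication terms and show the equivalence forces these to coincide. Recall from the proof of \Cref{th:assdef} that the Hom-Poisson bracket attached to a deformation $(\mu_\star,\alpha_\star)$ is $\p{a,b} = \tfrac{1}{2}(\mu_1(a,b)-\mu_1(b,a))$, and likewise $\p{a,b}' = \tfrac{1}{2}(\mu'_1(a,b)-\mu'_1(b,a))$ for the second deformation $(\mu'_\star,\alpha'_\star)$. It therefore suffices to prove that $\mu_1 - \mu'_1$ is a \emph{symmetric} bilinear map, since then its antisymmetrisation vanishes.

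First I would extract the first-order consequence of the equivalence condition $T(a\star' b)=T(a)\star T(b)$. Writing $T = \id + \sum_{i\geq 1}T_i t^i$ and comparing coefficients of $t$, exactly as in the first part of the deformation theorem, one obtains
\begin{equation*}
	\mu'_1(a,b)-\mu_1(a,b) = T_1(a)\,b + a\,T_1(b) - T_1(ab).
\end{equation*}
In the notation of \Cref{Section3} this right-hand side is precisely $(\delmm T_1)(a,b)$, so that $(\mu'_1-\mu_1,\alpha'_1-\alpha_1) = \del(T_1,0)$; in particular the difference of the first-order products is the Hochschild coboundary $\delmm T_1$. Note that the second equivalence condition involving $\alpha_\star$ and $\alpha'_\star$ is not needed for the bracket.

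Next I would invoke commutativity of $A$. Since $ab=ba$ for all $a,b\in A$, the expression $T_1(a)\,b + a\,T_1(b) - T_1(ab)$ is manifestly invariant under exchanging $a$ and $b$; that is, $\delmm T_1$ is symmetric on a commutative algebra. Hence $\mu_1-\mu'_1$ is symmetric, so $\mu_1(a,b)-\mu_1(b,a) = \mu'_1(a,b)-\mu'_1(b,a)$, and dividing by $2$ gives $\p{a,b}=\p{a,b}'$.

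There is no genuine obstacle here: the whole content is the single observation that the gauge coboundary $\delmm T_1$ is symmetric precisely because the underlying product is commutative, so the equivalence map $T_1$ drops out of the skew-symmetric part. The only point requiring care is the bookkeeping in powers of $t$: the bracket lives in order $t$, so only the order-$t$ piece of the equivalence is relevant, and the higher correction maps $T_i$ for $i\geq 2$, together with the twisting maps $\alpha_\star,\alpha'_\star$, play no role in the argument.
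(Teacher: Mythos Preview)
Your proof is correct and follows essentially the same route as the paper: both extract the first-order relation $\mu'_1-\mu_1=\delmm T_1$ from the equivalence and then observe that this coboundary is symmetric because $A$ is commutative, so it disappears from the antisymmetrisation defining the bracket. If anything, you make the key step (symmetry of $\delmm T_1$) more explicit than the paper, which just writes ``from which follows'' after the first-order expansion.
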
 
\begin{proof}
	If $\star$ and $\star'$ are equivalent there exists a $T= \id+ \sum_{i=1}^\infty  T_i t^i : A\ph \to A\ph$, such that $a \star' b = T^{-1}(T(a) \star T(b))$. Using $T^{-1} = \id - T_1 t + \O(t^2)$, this gives 
	\begin{equation}
		ab + \mu'_1(a,b)t = ab + t \big( \mu_1(a,b) + T_1(a)b +aT_1(b) -T_1(ab) \big).
	\end{equation} 
	From which follows $\p{a,b}' = \frac{1}{2} (\mu'_1(a,b) - \mu'_1(b,a)) =  \frac{1}{2} (\mu(a,b) - \mu(b,a)) = \p{a,b}$, as claimed.
\end{proof}

However different then in the associative case the Poisson bracket is not cohomologous to $\mu_1$. 
Instead we have that $(\p{\cdot,\cdot},0)$ is a cocycle. One can construct another cocycle, which is given by $(\frac{1}{2}(\mu_1(a,b) + \mu_1(b,a)) , \alpha_1)$. This however is not invariant under equivalence.  We suspect however that if $\alpha$ is invertible, one can construct a conjugate  $\alpha_0$-derivation out of it, which is invariant, as in the case where $\alpha_0 = \id$, see \Cref{th:defass}.\\

Now, we aim to study how the Poisson bracket and $\alpha_1$ change under Yau twists. 
	
\begin{prop}
	Let $(A,\mu,\alpha)$ be a Hom-associative algebra, $(A\ph,\star,\alpha_\star)$ a deformation of it  and $\phi = \sum_{i=0}^\infty t^i \phi_i$ be a morphism of $\star$, then the algebra $(A,\star_\phi,\phi \alpha_\star)$ obtained by Yau-twist is a deformation of $(A,\phi_0 \mu, \phi_0\alpha_0)$. In the case that $A$ is commutative the Hom-Poisson bracket is given by $\phi_0 \p{\cdot,\cdot}$, where $\p{\cdot,\cdot}$ denotes the Poisson bracket corresponding to $\star$. 
\end{prop}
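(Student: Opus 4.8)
The plan is to treat the two assertions separately, both of which reduce to short computations once the Yau twist construction is transported to the power series ring $\K\ph$. Since $\phi$ is a morphism of $\star$, it is in particular an algebra morphism of the Hom-associative algebra $(A\ph,\star,\alpha_\star)$ over $\K\ph$, so the Yau twist remark applies verbatim over $\K\ph$ and already guarantees that $(A\ph,\star_\phi,\phi\alpha_\star)$, with $a\star_\phi b = \phi(a\star b)$, is a Hom-associative algebra over $\K\ph$. It then only remains to identify its reduction modulo $t$ and, in the commutative case, to compute its Poisson bracket.

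First I would expand $\phi = \sum_i t^i\phi_i$ and $\star = \sum_j t^j\mu_j$ and collect orders, giving $a\star_\phi b = \sum_n t^n\sum_{i+j=n}\phi_i(\mu_j(a,b))$ and $\phi\alpha_\star = \sum_n t^n\sum_{i+j=n}\phi_i\alpha_j$. Reading off the constant terms, the reduction modulo $t$ of $a\star_\phi b$ is $\phi_0(ab)=(\phi_0\mu)(a,b)$ and that of $\phi\alpha_\star$ is $\phi_0\alpha_0$. Comparing the order-zero parts of $\phi(a\star b)=\phi(a)\star\phi(b)$ and $\phi\alpha_\star=\alpha_\star\phi$ shows that $\phi_0$ is an algebra morphism of $(A,\mu,\alpha)$, so $(A,\phi_0\mu,\phi_0\alpha_0)$ is the Yau twist $A_{\phi_0}$ and is in particular Hom-associative. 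Since this is exactly the mod-$t$ reduction of the twisted structure, $(A\ph,\star_\phi,\phi\alpha_\star)$ is a deformation of $(A,\phi_0\mu,\phi_0\alpha_0)$.

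For the second assertion I would first note that commutativity of $(A,\mu,\alpha)$ passes to $(A,\phi_0\mu,\phi_0\alpha_0)$, so \Cref{th:assdef} applies and the Poisson bracket of the twisted deformation is $\p{a,b}_\phi = \frac{1}{2t}\big(a\star_\phi b - b\star_\phi a\big) \bmod t$. The clean way to evaluate this is to use $a\star_\phi b - b\star_\phi a = \phi(a\star b - b\star a)=\phi([a,b]_\star)$ and then invoke the expansion $[a,b]_\star = 2t\p{a,b}+\O(t^2)$ from the proof of \Cref{th:assdef}, which holds because the order-zero term $ab-ba$ vanishes by commutativity. Since $\phi=\phi_0+\O(t)$, applying $\phi$ gives $\phi([a,b]_\star)=2t\,\phi_0(\p{a,b})+\O(t^2)$, whence $\p{a,b}_\phi=\phi_0(\p{a,b})$ as claimed. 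Equivalently, one reads off the order-one coefficient directly: it is $\phi_0(\mu_1(a,b)-\mu_1(b,a))+\phi_1(ab-ba)$, and the second summand drops out by commutativity.

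I do not expect a genuine obstacle here, since the statement is essentially bookkeeping; the only point requiring care is that the division by $t$ followed by reduction modulo $t$ must commute with applying $\phi$. This is harmless precisely because the antisymmetrized product starts in order $t$ while $\phi$ starts in order $t^0$, so $\phi$ contributes only its constant term $\phi_0$ to the order-$t$ coefficient. The same remark would let me transcribe the statement dually to the cocommutative Hom-coalgebra setting if desired.
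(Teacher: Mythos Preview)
Your proposal is correct and follows essentially the same approach as the paper: expand $\star_\phi$ and $\phi\alpha_\star$ in powers of $t$, read off the order-zero and order-one coefficients, and conclude. The paper's own proof is a one-line computation recording the expansions $a\star_\phi b = \phi_0(ab) + t(\phi_0\mu_1(a,b)+\phi_1(ab)) + \O(t^2)$ and $\phi\alpha_\star = \phi_0\alpha_0 + t(\phi_1\alpha_0+\phi_0\alpha_1)+\O(t)$; you supply the surrounding details (that $\phi_0$ is a morphism, that the Yau twist remark transports to $\K\ph$, and the clean use of $\phi([a,b]_\star)$ for the Poisson bracket) which the paper leaves implicit.
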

\begin{proof}
	Using $a \star_\phi b = \phi ( a \star b) = \phi_0(ab) + t( \phi_0 \mu_1(a,b) + \phi_1 (ab) )+ \O( t^2)$ and $ \phi \alpha_\star = \phi_0 \alpha_0 + t( \phi_1 \alpha_0 +  \phi_0 \alpha_1) + \O(t)$ this is easy to check. We also note that $(\alpha_\phi)_1 =  \phi_1 \alpha_0 +  \phi_0 \alpha_1$.
\end{proof}
	
In the case $\phi_0= \id$, in the above proposition, we get a deformation of the same algebra, which also has the same Poisson bracket, but in general with different $\alpha_1$ so the deformations are not equivalent in general.

Next we want to consider the case, when $A$ is indeed associative, but is deformed into a  Hom-associative  algebra.  
	
\begin{prop}\label{th:defass}
	Let $(A,\mu,\id)$ be a commutative Hom-associative algebra and $(A,\star,\alpha)$ be a deformation of it, then $\alpha_1$ is a derivation of the associated Poisson algebra and invariant under equivalence.
\end{prop}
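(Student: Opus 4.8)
The statement bundles three assertions: that $\alpha_1$ is a derivation of the commutative product $\mu$, that it is a derivation of the associated Hom-Poisson bracket $\p{\cdot,\cdot}$ of \Cref{th:assdef}, and that it is unchanged under equivalence. Since here $\alpha_0 = \id$, the base object is an ordinary commutative associative algebra and the associated Poisson algebra is an ordinary one, so ``derivation of the Poisson algebra'' means a map that is simultaneously a derivation of $\mu$ and of $\p{\cdot,\cdot}$. The plan is to read off each property from the appropriate order in $t$ of the deformation equations, reusing the star-commutator expansion from the proof of \Cref{th:assdef}.

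For the first property I would specialize the multiplicativity deformation equation \eqref{eq:multdef} to $n=1$. Its right-hand side $R_1^2$ is empty, and because $\alpha_0 = \id$ the term $\delam \mu_1$ vanishes identically (it equals $\mu_1(a,b) - \mu_1(a,b)$), so the equation collapses to $\delaa \alpha_1 = 0$. Unwinding this with $\alpha_0 = \id$ gives exactly $\alpha_1(ab) = \alpha_1(a)\,b + a\,\alpha_1(b)$, i.e. $\alpha_1 \in \Der(A,\mu)$.

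The substantive step is the bracket-derivation property. Here I would apply the multiplicativity identity $\alpha_\star([a,b]_\star) = [\alpha_\star(a),\alpha_\star(b)]_\star$ of the deformed algebra and compare the coefficients of $t^2$, exactly as \eqref{eq:ser1} does at order $t$. Writing $[a,b]_\star = 2t\,\p{a,b} + t^2 B_2(a,b) + \O(t^3)$ with $B_2(a,b) = \mu_2(a,b) - \mu_2(b,a)$, the left-hand side has $t^2$-coefficient $2\alpha_1(\p{a,b}) + B_2(a,b)$. Expanding the right-hand side as $\alpha_\star(a)\star\alpha_\star(b) - \alpha_\star(b)\star\alpha_\star(a)$ to second order, commutativity of $\mu_0$ kills all the symmetric contributions (those containing $\alpha_2$ and $\mu_0(\alpha_1(a),\alpha_1(b))$), the $\mu_1$-terms assemble into $2\p{\alpha_1(a),b} + 2\p{a,\alpha_1(b)}$, and the $\mu_2$-terms give back $B_2(a,b)$. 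Equating the two $t^2$-coefficients, the $B_2$ contributions cancel and one is left with
\begin{equation*}
	\alpha_1(\p{a,b}) = \p{\alpha_1(a),b} + \p{a,\alpha_1(b)} .
\end{equation*}
The main obstacle is purely bookkeeping: one must track the six second-order contributions to each ordered product and check that precisely the symmetric ones cancel. What makes this clean is that nothing about $\mu_2$ or $\alpha_2$ is used beyond commutativity of $\mu_0$, so the higher deformation data never enters.

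Finally, for invariance I would invoke the description of equivalence from the deformation theorem: an equivalence $T = \id + t\,T_1 + \cdots$ alters the first-order pair by $\del(T_1,0) = (\delmm T_1,\,\delma T_1)$. Since $\alpha_0 = \id$ one has $\delma T_1 = \alpha_0 T_1 - T_1 \alpha_0 = 0$, so the $\alpha$-component is untouched and $\alpha_1' = \alpha_1$. Hence $\alpha_1$ is literally invariant under equivalence, not merely up to a coboundary, which completes the three assertions.
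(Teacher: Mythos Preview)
Your proof is correct and follows the same approach as the paper: the bracket-derivation property comes from the order-$t^2$ part of the multiplicativity identity for the star commutator (exactly \eqref{eq:ser1}), and invariance comes from the fact that under equivalence $\alpha_1$ changes by $\delma T_1 = [\alpha_0,T_1]$, which vanishes since $\alpha_0=\id$. Your write-up is in fact more complete than the paper's, since you also verify explicitly that $\alpha_1$ is a derivation of the commutative product $\mu$ (required by the paper's definition of a derivation of a Poisson algebra but not spelled out in its proof).
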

\begin{proof}
	Using the proof of \Cref{th:assdef}, especially the second order in \cref{eq:ser1}, we get that it is a derivation of the Poisson bracket, since the terms involving $\mu_2$ cancel. 
	Since  two equivalent transformations $\alpha_1$ differs by the commutator $[S_1, \alpha_0]$, the second statement is clear.
\end{proof}
	
So in this case we are closer to the associative case, since we get from the first order terms a canonical invariant algebraic structure, which at least in simple cases should be enough to determine all deformations up to equivalence.

In the associative case especially the deformation of the symmetric algebra $S(V)$ for a vector space $V$ is well studied and understood. In this case there is the well known theorem that any Poisson bracket admits a deformation, which was first proved by Kontsevich in \citep{kontsevich}. 
We can extend this to Hom-context in the following way.
\begin{theorem}
	Let $S(V)$ be the symmetric algebra over a vector space $V$, considered as a Hom-associative algebra, and $\p{\cdot,\cdot}$ a Poisson bracket on $S(V)$ and $\alpha_1$ a derivation of it, then there exists a deformation of it. 
\end{theorem}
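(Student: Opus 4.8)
The plan is to reduce the statement to Kontsevich's theorem for associative algebras \citep{kontsevich} together with the Yau-twist proposition proved just above. Since here $\alpha_0=\id$, the algebra $S(V)$ is a genuine commutative associative algebra, and a deformation in our sense is a pair $(\mu_\star,\alpha_\star)$ with $\mu_0=\mu$ and $\alpha_0=\id$. I would build such a pair in two stages: first produce an \emph{associative} deformation realizing the Poisson bracket (with constant structure map), and then Yau-twist it by a suitable automorphism to install the prescribed $\alpha_1$ without disturbing the underlying Poisson bracket.

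First I would invoke Kontsevich's theorem to obtain a deformation quantization of $\p{\cdot,\cdot}$, i.e.\ an associative star product $\star=\mu+t\mu_1^K+t^2\mu_2^K+\cdots$ on $S(V)\ph$ whose antisymmetrization of $\mu_1^K$ recovers $\p{\cdot,\cdot}$ (rescaling $t$ or the bracket to absorb the factor $\tfrac12$ occurring in \Cref{th:assdef}). Regarded as a Hom-associative algebra with constant structure map $\id$, this is already a deformation of $(S(V),\mu,\id)$ whose associated Hom-Poisson bracket is $\p{\cdot,\cdot}$, but whose $\alpha_1$ vanishes. To install the prescribed $\alpha_1$, I would apply the Yau-twist proposition above with a morphism $\phi=\id+t\phi_1+\cdots$ of $\star$ having $\phi_0=\id$: the twisted algebra $(S(V),\phi\circ\star,\phi)$ is then a deformation of $(S(V),\mu,\id)$ with new structure map $\alpha_\star=\phi$, hence new first-order term $\phi_1$, and with \emph{unchanged} Poisson bracket $\phi_0\p{\cdot,\cdot}=\p{\cdot,\cdot}$ (the extra contribution $\alpha_1\circ\mu$ to $\mu_1$ is symmetric, so it drops out of the antisymmetrization). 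Thus it suffices to exhibit a morphism $\phi$ of the star product $\star$ with $\phi_1=\alpha_1$.

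Producing such a $\phi$ is the heart of the argument, and I would phrase it through derivations. A formal automorphism of $\star$ reducing to the identity modulo $t$ is exactly the exponential $\exp(tD)$ of a $\K\ph$-linear derivation $D=\alpha_1+tD_1+\cdots$ of $\star$, and then $\phi_1=D_0$; so the task is to lift $\alpha_1$ to a derivation of $\star$ with leading term $\alpha_1$. At order $t^0$ this only demands that $\alpha_1$ be a derivation of $\mu$, which holds; the higher-order conditions are Hochschild equations $\delta D_k=(\text{terms in }\alpha_1,\mu_1^K,\dots)$ whose solvability is obstructed in $\HH^2(S(V))$. By Hochschild--Kostant--Rosenberg the nontrivial part of the first obstruction is the Lie derivative of the Poisson bivector along $\alpha_1$, which vanishes precisely because $\alpha_1$ is assumed to be a derivation of $\p{\cdot,\cdot}$. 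More efficiently, this lifting is the tangent-level statement of Kontsevich's formality theorem: a Poisson vector field (a derivation of both $\mu$ and $\p{\cdot,\cdot}$) is carried by the $L_\infty$-quasi-isomorphism $\mathcal U$ to a derivation of the star product, namely $D=\sum_{n\ge 0}\tfrac1{n!}\,\mathcal U_{n+1}(\alpha_1,t\pi,\dots,t\pi)$, where $\pi$ is the Poisson bivector of $\p{\cdot,\cdot}$, with leading term $\mathcal U_1(\alpha_1)=\alpha_1$.

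Granting this lift, I set $\phi=\exp(tD)$, an automorphism of $\star$ with $\phi_1=\alpha_1$, and the Yau twist of $\star$ by $\phi$ is the required deformation of $(S(V),\mu,\id)$: its Hom-Poisson bracket is $\p{\cdot,\cdot}$ and the first-order term of its deformed structure map is $\alpha_1$. The main obstacle is therefore concentrated entirely in the existence of the derivation $D$, that is, in quantizing a Poisson vector field to a derivation of the star product; everything else is formal bookkeeping via the Yau-twist proposition and the computation of the induced bracket in \Cref{th:assdef}.
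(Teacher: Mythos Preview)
Your proposal is correct and follows essentially the same route as the paper: quantize the Poisson bracket via Kontsevich, lift $\alpha_1$ to a derivation $D=\alpha_1+tD_1+\cdots$ of the star product, exponentiate to an automorphism $\phi=\exp(tD)$, and Yau-twist. The only difference is that the paper outsources the existence of $D$ to a reference (Sharygin), whereas you justify it via the tangent map of Kontsevich's formality $L_\infty$-morphism; your brief order-by-order obstruction sketch is incomplete on its own (since $\HH^2(S(V))\neq 0$ the higher obstructions do not vanish automatically), but your formality argument covers all orders and is the right one.
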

\begin{proof}
	Using the theorem of Kontsevich there exits a deformation of the Poisson bracket, which we denote by $\tilde\star$. Since $\alpha_1$ is a derivation and Poisson derivation there exists a derivation $D = \alpha_1 + \sum_i D_i t^i$ of $\star$ as shown e.g. in \cite{Sharygin2016}.  With this $\E^{t D}$ is  an automorphism of the form $\phi = \id + t \alpha_1 + \O(t^2)$. Now the Yau twist $\star = \tilde{\star}_\phi$  has the desired properties. 
\end{proof}

\section{Examples}\label{sc:examples}

In the case, where $\alpha$ is invertible, the cohomology can be computed using \Cref{th:cohoyau}.
Let $V$ be a vector space with an endomorphism $\alpha$. Then we consider the symmetric algebra $S(V)$ over $V$ and $\tilde{\alpha}$ the extension of $\alpha$ to $S(V)$. With this we can define $S(V,\alpha)$ as the Yau twist of $S(V)$ by $\tilde{\alpha}$. If $\alpha$ is invertible this is isomorphic to the free commutative Hom-associative algebra over $(V,\alpha)$. 
	
Using the Hochschild-Kostant-Rosenberg Theorem, it is not difficult to see that  $$H^k(C(\tilde{\alpha }),\del_\mu) = \Lambda^k \Der(S(V)) \oplus \tilde{\alpha} \Lambda^{k-1} \Der(S(V)).$$ 
The differential $\del_\gamma$ is well defined on this complex. 
So we get $H(\gamma) =  \Lambda^k \Der_{\tilde{\alpha}}(S(V)) \oplus \factor{\tilde{\alpha} \Lambda^{k-1} \Der(S(V))}{\im \del_\gamma}$, where $\Der_\gamma(A) = \{ \phi \in \Der(A) | \phi \gamma = \gamma \phi\}$. 
If $V$ is finite dimensional and $\alpha$, and consequently $\tilde{\alpha}$, are diagonal  the second summand simplifies to $\tilde{\alpha} \Lambda^{k-1} \Der_{\tilde{\alpha}}(S(V))$.

Next we consider the case $\alpha =0$. In this case every bilinear map defines a Hom-associative algebra. It turns out that all differentials except low degrees are zero. In degree one, we have
$\delmm \phi (x,y) = x \phi(y) - \phi(xy) + \phi(x)y$ and in degree two $\delaa \psi(x,y) = \psi(xy)$ and $\delam \psi(x,y,z) = (xy)\psi(z) -\psi(x)(yz)$. So the cohomology in degree 1 consists of the derivations as always and starting from degree 4 it is the whole complex.
	
The other extreme case is when the multiplication is zero. In this case all differentials except of $\delma$ are zero. 
So the cohomology can be easily computed to be $$\aHH^n = \Hom_\alpha(A^{\otimes n},A) \oplus  \factor{\Hom(A^{\otimes (n-1)},A)}{\im \delma},$$ where $\Hom_\alpha(A^{\otimes n},A) = \{ \phi \in \Hom(A^{\otimes n},A) | \delma \phi =0\}$ are the linear maps commuting with $\alpha$. 
If $\alpha$ is diagonalizable, so is $\delma$ and we have $ \factor{\Hom(A^{\otimes (n-1)},A)}{\im \delma} \cong \Hom_\alpha(A^{\otimes (n-1)},A)$.

Next we want to give two examples, where we compute the cohomology -- or at least its dimension -- in low degrees explicitly using computer algebra systems. The first is a Hom-associative algebra, which is not of associative type the second one a truncated polynomial algebra.
	 
\paragraph{Hom-associative algebra not of associative type}
	
We consider the Hom-associative algebra, spanned by $e_1,e_2$ with structure map $\alpha(e_1) = e_1+ e_2,  \alpha(e_2) =0$ and multiplication $e_1 e_1= e_1 , e_i e_j =e_2$ for all other $i,j$.
Then using computer algebras system, we computed the lower cohomologies. It turns out that $\aHH(1) = \aHH^2 = 0, \dim(\aHH^3) =2$ and $\dim(\aHH^4) = 10$. The third cohomology is spanned by 
\begin{align*}
	\psi(e_1, e_2)     & = e_2,       &   
	\psi(e_2, e_1)     & = e_2,       &   
	\psi(e_2, e_2) &= e_2,           \\
	\phi(e_1, e_1,e_1) & = e_2,       &   
	\phi(e_1, e_1,e_2) & = 2 e_2-e_1, &   
	\phi(e_2, e_1,e_1) & = e_1,       &   
	\phi(e_2, e_1,e_2) & = e_2        &   
	\\ \text{and }
	\phi(e_1, e_1,e_1) & = -e_2,      &   
	\phi(e_1, e_2,e_1) &= -e_2,      &
	\phi(e_2, e_1,e_2) & = e_2,       &   
	\phi(e_2, e_2,e_2) &= e_2.       
\end{align*}

\paragraph{Truncated polynomial algebra}

We consider a vector space $V$ spanned by $x,y$ and an endomorphism $\alpha$ of it.
Then we can form the polynomial algebra $S(V)$ of it. We consider $A =\factor{S(V)}{S^3(V)}$. This is spanned by $\{1,x,y,x^2,xy,y^2\}$.  The map $\alpha$ can be extended to this as an algebra morphism, which we  denote by $\tilde{\alpha}$, so we can consider the Yau twist $A_\alpha$. 
The cohomology of course depends on $\alpha$. So we consider the different possibilities. 
We start with the case $\alpha = \id$. In  this case  $\dim(\aHH^1) =10,\dim(\aHH^2) =25$ and $\dim(\aHH^3) =41$.
	
In the case $\alpha= \lambda \id, \lambda \neq 1$, we have $\dim(\aHH^1) =4$, and the derivations are determined by
\begin{align*}
	\phi(x) & = \lambda_1 x + \lambda_2 y,  & 
	\phi(y) & = \lambda_3 x + \lambda_4 y. 
\end{align*} 
For the second cohomology, we have $\dim(\aHH^1) =7$. Four generators are of the form $(\phi_\psi, \psi)$ with $\psi = \alpha \phi$ for $\phi \in \aHH^1$. The rest is given by 
\begin{align*}
	\phi(x,y) & = \lambda_3 y^2 -\lambda_1 xy, & 
	\phi(y,x) & = \lambda_2 x^2 + \lambda_1 xy. 
\end{align*}
For the third cohomology, we get $\dim(\aHH^3) = 3$, so all comes from $\aHH^2$. 
	
In the case $\alpha(x) = \lambda(x), \alpha(y) = x + \lambda y$, this  $\alpha$ is not diagonalizable, we have $\dim(\aHH^1) = 2$,  and it is given by
\begin{align*}
	\phi(x) & = \lambda_1 x, &   
	\phi(y) & = \lambda_1 y +\lambda_2 x. 
\end{align*}
For the second cohomology, we have  $\dim(\aHH^2) = 3$, with two generators  coming from derivations and 
\begin{equation*}
	\phi(x,y) = x^2 = - \phi(y,x).
\end{equation*}
For the third cohomology, we get $\dim(\aHH^3) = 1$.  
	
The next case is a $\alpha(x) = \lambda x, \alpha(y) = \mu y$, with $\lambda^i \neq \mu^k$ for all $i,k \in \N$. In this 
case  $\dim(\aHH^1) = 2$,  namely
\begin{align*}
	\phi(x) & = \lambda_1 x, &   
	\phi(y) & = \lambda_2 y. 
\end{align*}
For the second cohomology, we get  $\dim(\aHH^2) = 3$, with two coming from derivations and
\begin{align*}
	\phi(x,y) = \lambda_1 xy. 
\end{align*}
For the third cohomology, we get $\dim(\aHH^3) =1 $.

As last case we want to consider a diagonal $\alpha$, but such that the eigenvalues are algebraically dependent, so we consider $\alpha(x) = 2x, \alpha(y) = 4y$.
In this case $\dim(\aHH^1) = 3$, namely the derivations determined by
\begin{align*}
	\phi(x) & = \lambda_1 x, & 
	\phi(y) & = \lambda_2 y + \lambda_3 x^2. 
\end{align*}
For the second cohomology, we get  $\dim(\aHH^2) = 6$, with three coming from derivations, one is the same as in the previous case  and
\begin{align*}
	&\phi(x,y) = \lambda_1 xy,                  \\
	&\phi(x,xy) = \lambda_2 y^2 = \phi(xy,x),   \\
	&\phi(y,x^2) = \lambda_2 y^2 = \phi(x^2,y), \\
	&\phi(x,x^2) = \phi(x^2,x) = \lambda_3 xy. 
\end{align*}

\section{$\alpha$-type Gerstenhaber-Schack cohomology for Hom-bialgebras}\label{sc:Hombialgeras}
	
In this section we  extend the $\alpha$-type Hochschild cohomology defined for Hom-associative algebras to Hom-bialgebras. We only give the definition here and reserve further study to a forthcoming paper.

As a shorthand notation, we set $H^{i,j} = \Hom(A^{\otimes i},A^{\otimes j})$, for a Hom-bialgebra $A$ and $i,j \in \N$. For $i$ or $j\leq 0$ we set $H^{i,j} = 0$. (This is necessary since the differentials we are going to define, would involve $\alpha^{-1}$ or $\beta^{-1}$ otherwise.)
We define a bicomplex $C_{GS}^{\bullet,\bullet}$ with 
\begin{equation}
	{C_{GS}}^{n,m} ={C_{GS}}^{n,m}_{\mu\Delta}  \oplus {C_{GS}}^{n,m}_{\alpha\Delta}  \oplus {C_{GS}}^{n,m}_{\mu\beta}\oplus {C_{GS}}^{n,m}_{\alpha\beta} 
	=   H^{n,m} \oplus  H^{n-1,m}  \oplus H^{n,m-1} \oplus H^{n-1,m-1} .
\end{equation}
	
	
We denote an element in ${C_{GS}}^{n,m}$ by $(\phi, \psi,\chi, \xi)$. The algebra differential $\del: {C_{GS}}^{n,m} \to {C_{GS}}^{n,m+1}$ is given by 
\begin{align}
	\del (\phi, \psi,\chi, \xi) = ( \delmm \phi - \delma \psi, \delma \phi - \delaa \psi, \delmm \chi - \delam \xi, \delma \chi - \delaa \xi) ,
\end{align}
where the differentials are the $\alpha$-type Hochschild differentials defined before, where the left action on $A^{\otimes m}$ in ${C_{GS}}^{n,m}_{\alpha\Delta}$ and ${C_{GS}}^{n,m}_{\mu\Delta}$ is the usual one and  the left-action on $A^{\otimes (m-1)}$ in ${C_{GS}}^{n,m}_{\alpha\beta}$ and ${C_{GS}}^{n,m}_{\mu\beta}$ is given by $\beta(x) \cdot y$, where $\cdot$ denotes the usual left-action on $A^{\otimes (m-1)}$, and similarly for the right-action.

Dually the coaction on $A^{\otimes (m-1)}$  in  ${C_{GS}}^{n,m}_{\alpha\beta}$ and ${C_{GS}}^{n,m}_{\alpha\Delta}$ is given by $\alpha(x_{(-1)}) \otimes x_{(0)}$, where $x \mapsto x_{(-1)} \otimes x_{(0)}$ denotes the usual left coaction on $A^{\otimes (m-1)}$.

Since there are a lot of different maps involved we give a diagram depicting all maps, which start in $C_{GS}^{i,j}$ :
\begin{center}
	\begin{tikzpicture}[scale=2.0,above]
		\node (ab11) at (1,1) {${C_{GS}}^{i,j}_{\alpha\beta}$};
		\node (mb11) at (2,1) {${C_{GS}}^{i,j}_{\mu\beta}$};
		\node (ad11) at (1,2) {${C_{GS}}^{i,j}_{\alpha\Delta}$};
		\node (md11) at (2,2) {${C_{GS}}^{i,j}_{\mu\Delta}$};
		\node (ab21) at (3,1) {${C_{GS}}^{i+1,j}_{\alpha\beta}$};
		\node (mb21) at (4,1) {${C_{GS}}^{i+1,j}_{\mu\beta}$};
		\node (ad21) at (3,2) {${C_{GS}}^{i+1,j}_{\alpha\Delta}$};
		\node (md21) at (4,2) {${C_{GS}}^{i+1,j}_{\mu\Delta}$};
		\node (ab12) at (1,3) {${C_{GS}}^{i,j+1}_{\alpha\beta}$};
		\node (mb12) at (2,3) {${C_{GS}}^{i,j+1}_{\mu\beta}$};
		\node (ad12) at (1,4) {${C_{GS}}^{i,j+1}_{\alpha\Delta}$};
		\node (md12) at (2,4) {${C_{GS}}^{i,j+1}_{\mu\Delta}$};
		\draw[dotted] (2.5,0.5) -- (2.5,4.5);
		\draw[dotted] (0.5,2.5) -- (4.5,2.5);
		\path[->] (ab11) edge[bend left]  node {$\delaa$} (ab21); 
		\path[->] (ab11) edge[bend right=20]  node {$\delam$} (mb21); 
		\path[->] (mb11) edge node {$\delma$} (ab21); 
		\path[->] (mb11) edge[bend left]  node {$\delmm$} (mb21); 
		\path[->] (ad11) edge[bend left]  node {$\delaa$} (ad21); 
		\path[->] (ad11) edge[bend right=20]  node {$\delam$} (md21); 
		\path[->] (md11) edge  node {$\delma$} (ad21); 
		\path[->] (md11) edge[bend left]  node {$\delmm$} (md21); 
																	
		\path[->] (ab11) edge[bend left]  node {} (ab12); 
		\path[->] (ab11) edge[bend right=20]  node {} (ad12); 
		\path[->] (ad11) edge node {} (ab12); 
		\path[->] (ad11) edge[bend left]  node {} (ad12); 
		\path[->] (mb11) edge[bend left]  node {} (mb12); 
		\path[->] (mb11) edge[bend right=20]  node {} (md12); 
		\path[->] (md11) edge  node {} (mb12); 
		\path[->] (md11) edge[bend left]  node {} (md12); 	
	\end{tikzpicture}
\end{center}
	
\begin{prop}
	The differentials  defined above make $C_{GS}$ to a bicomplex.
\end{prop}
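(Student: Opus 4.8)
The plan is to verify the three defining properties of a bicomplex for the horizontal (algebra) differential $\del_h := \del$ given above, which raises the algebra degree, and the vertical (coalgebra) differential $\del_v$, which raises the coalgebra degree and is defined by the same four maps read in the coalgebra direction, i.e. using $\Delta$ and $\beta$ in place of $\mu$ and $\alpha$ together with the coactions specified before the statement. Concretely one must show $\del_h^2=0$, $\del_v^2=0$, and $\del_h\del_v=\del_v\del_h$ (the last up to the usual sign, so that the total differential squares to zero).

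First I would dispose of $\del_h^2=0$. The key observation is that $\del_h$ never mixes the two ``$\Delta$-rows'' $C_{GS}^{\bullet,m}_{\mu\Delta}\oplus C_{GS}^{\bullet,m}_{\alpha\Delta}$ with the two ``$\beta$-rows'' $C_{GS}^{\bullet,m}_{\mu\beta}\oplus C_{GS}^{\bullet,m}_{\alpha\beta}$: on the pair $(\phi,\psi)$ it is exactly the $\alpha$-type Hochschild differential of \Cref{th:delalg} with values in the bimodule $A^{\otimes m}$, and on $(\chi,\xi)$ the same differential with values in $A^{\otimes(m-1)}$. The only preliminary point is therefore to check that $A^{\otimes m}$, with the diagonal action built from $\Delta$, and $A^{\otimes(m-1)}$, with the $\beta$-twisted action as specified, are genuine Hom-bimodules; this uses $\Delta\alpha=(\alpha\otimes\alpha)\Delta$, $\beta\mu=\mu(\beta\otimes\beta)$ and $\alpha\beta=\beta\alpha$ together with the iterated-action formula \eqref{eq:mod1}. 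Granting this, $\del_h^2=0$ is immediate from the module version of \Cref{th:delalg}, whose proof the text already declares to be completely analogous. The identity $\del_v^2=0$ follows by the same argument read through the algebra--coalgebra duality: $\del_v$ is the $\alpha$-type differential of the Hom-coalgebra $(A,\Delta,\beta)$ with coefficients in the bicomodules $A^{\otimes n}$ and $A^{\otimes(n-1)}$, so it squares to zero by the dual of \Cref{th:delalg}.

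The genuinely new content, and the step I expect to be the main obstacle, is the commutation $\del_h\del_v=\del_v\del_h$. The bookkeeping is organized by the $2\times2$ grid of corners $\{\mu,\alpha\}\times\{\Delta,\beta\}$: the algebra differential $\del_h$ acts only along the $\{\mu,\alpha\}$ direction (it never mixes the $\Delta$-rows with the $\beta$-rows), while the coalgebra differential $\del_v$ acts only along the $\{\Delta,\beta\}$ direction (it never mixes the $\mu$-columns with the $\alpha$-columns). Consequently the identity reduces to checking, corner by corner, that each algebra-direction map commutes with each coalgebra-direction map---for instance that $\delmm$ (algebra) commutes with $\delmm$ (coalgebra), that $\delma$ (algebra) commutes with $\delam$ (coalgebra), and so on. Each such commutation is precisely an instance of Hom-bialgebra compatibility: $\Delta(xy)=\Delta(x)\Delta(y)$ trades a multiplication of two adjacent arguments coming from a $\delmm$ or $\delma$ of $\del_h$ against the diagonal coaction defining the coefficient module, $\Delta\alpha=(\alpha\otimes\alpha)\Delta$ and $\beta\mu=\mu(\beta\otimes\beta)$ slide the two structure maps past one another, and $\alpha\beta=\beta\alpha$ closes the remaining commutators. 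I expect the heaviest single verification to be the $\mu\Delta$-to-$\mu\Delta$ square, where the two principal differentials $\delmm^{\mathrm{alg}}$ and $\delmm^{\mathrm{coalg}}$ meet and comultiplicativity must be used in its full strength; once all four squares are matched, the total differential squares to zero and $C_{GS}$ is a bicomplex.
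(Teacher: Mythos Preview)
Your plan is correct and matches the paper's approach: the paper likewise takes $\del_h^2=0$ and $\del_v^2=0$ from the module version of \Cref{th:delalg} (and its dual), and then verifies the commutation of the horizontal and vertical differentials by checking, corner by corner, that each of $\delmm,\delma,\delam,\delaa$ commutes with each of $\del_{\Delta\Delta},\del_{\Delta\beta},\del_{\beta\Delta},\del_{\beta\beta}$.

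One point of emphasis differs from your expectation. You anticipate the $\mu\Delta$-to-$\mu\Delta$ square ($\delmm$ versus $\del_{\Delta\Delta}$) to be the heaviest check; in fact that commutation is exactly the Gerstenhaber--Schack differential for Hom-bialgebras already established in \citep{MR3640817}, so the paper simply cites it (and remarks that the other three ``diagonal'' squares $C^{n,m}_{ij}\to C^{n+1,m+1}_{ij}$ go the same way). The genuinely new verifications, and where the paper spends its explicit effort, are the cross-terms involving the $\alpha$-type pieces: $\delma$ against $\del_{\Delta\beta}$ (using $\alpha\beta=\beta\alpha$), $\delma$ against $\del_{\Delta\Delta}$ (using $\Delta\alpha=(\alpha\otimes\alpha)\Delta$), and $\delam$ and $\delmm$ against $\del_{\beta\Delta}$ (using \eqref{eq:mod1} and \eqref{eq:mu1} to reorganize the twisted actions). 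So your organizational scheme is right, but the weight of the computation lies in the off-diagonal squares rather than the principal one.
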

\begin{proof}
	From the  known differential of Hom-bialgebras \citep{MR3640817} its follows that $\delmm \del_{\Delta\Delta} = \del_{\Delta\Delta} \delmm$. By doing  essentially  the same calculation, one gets the same for all parts ${C_{GS}}^{n,m}_{ij} \to {C_{GS}}^{n+1,m+1}_{ij}$ for $i \in \{\mu,\alpha\}$ and $j\in\{\Delta,\beta\}$.
												
	Next we consider the part ${C_{GS}}^{n,m}_{\mu\Delta} \to {C_{GS}}^{n,m}_{\alpha\beta}$, for this we compute
	\begin{align*}
		\del_{\Delta\beta} \delma  \phi(x_1,\dots,x_{n}) = & \beta^{\otimes m} \alpha^{\otimes m} \phi(x_1,\dots,x_{n})                                                
		- \beta^{\otimes m} \phi(\alpha(x_1),\dots, \alpha(x_n))  \\
		                                                   & - \alpha^{\otimes m} \phi( \beta(x_1,\dots, \beta(x_n)) + \phi(\beta(\alpha(x_1),\dots,\beta(\alpha(x_n)) \\
		=                                                  & \delma \del_{\Delta\beta} \phi(x_1,\dots,x_{n}) .                                                          
	\end{align*}
	This holds, since $\alpha$ and $\beta$ commute.

	${C_{GS}}^{n,m}_{\mu\Delta} \to {C_{GS}}^{n,m}_{\alpha\beta}$
	\begin{small}
		\begin{align*}
			\del_{\Delta\Delta} & \delma \phi(x_1,\dots,x_n)  =  \alpha \mu^n_\alpha(x^{(1)}_1,\dots,x^{(1)}_n) \otimes (\delma \phi)   (x_1^{(2)},\dots,x_n^{(2)}) \\
			                    & + \sum_{i=1}^m (-1)^i \Delta_i (\delma \phi)(x_1,\dots,x_n)                                                                       
			+ (-1)^{m+1}   (\delma \phi)   (x_1^{(1)},\dots,x_n^{(1)}) \otimes \alpha \mu^n_\alpha(x^{(2)}_1,\dots,x^{(2)}_n)          \\
			                    & = \alpha \mu^n_\alpha(x^{(1)}_1,\dots,x^{(1)}_n) \otimes (\alpha^{\otimes m} \phi)   (x_1^{(2)},\dots,x_n^{(2)})                  
			- \alpha \mu^n_\alpha(x^{(1)}_1,\dots,x^{(1)}_n) \otimes  \phi   (\alpha(x_1^{(2)}),\dots,\alpha(x_n^{(2)}))               \\
			                    & + \sum_{i=1}^m (-1)^i \Delta_i (\alpha^{\otimes m} \phi)(x_1,\dots,x_n)                                                           
			- \sum_{i=1}^m (-1)^i \Delta_i  \phi(\alpha(x_1),\dots,\alpha(x_n))                                                        \\
			                    & + (-1)^{m+1}   (\alpha^{\otimes m} \phi)(x_1^{(1)},\dots,x_n^{(1)}) \otimes \alpha \mu^n_\alpha(x^{(2)}_1,\dots,x^{(2)}_n)        \\
			                    & - (-1)^{m+1}   \phi(\alpha(x_1^{(1)}),\dots,\alpha(x_n^{(1)})) \otimes  \alpha\mu^n_\alpha(x^{(2)}_1,\dots,x^{(2)}_n)             
		\end{align*}
		\begin{align*}
			\delma \del_{\Delta\Delta} \phi(x_1,\dots,x_n) & =  \alpha^{\otimes m+1}( \del_{\Delta\Delta} \phi)(x_1,\dots,x_n) - ( \del_{\Delta\Delta} \phi)(x_1,\dots,x_n)               \\
			                                               & = \alpha^{\otimes m+1} (\mu^n_\alpha(x_1^{(1)},\dots,x_n^{(1)}) \otimes \phi(x_1^{(2)}, \dots, x_n^{(2)})                    \\
			                                               & +  \sum_{i=1}^m (-1)^i   \alpha^{\otimes m+1} \Delta_i \phi(x_1,\dots,x_n)                                                   \\ 
			                                               & + (-1)^{m+1} \alpha^{m+1} (\phi(x_1^{(1)},\dots,x_n^{(1)}) \otimes \mu^n_\alpha(x_1^{(2)}, \dots, x_n^{(2)}))                \\
			                                               & - \mu^n_\alpha(\alpha(x_1)^{(1)},\dots,\alpha(x_n)^{(1)}) \otimes \phi(\alpha(x_1)^{(2)},\dots,\alpha(x_n)^{(2)})            \\
			                                               & - \sum_{i=1}^m (-1)^i \Delta_i  \phi(\alpha(x_1),\dots,\alpha(x_n))                                                          \\
			                                               & -(-1)^{m+1}  \phi(\alpha(x_1)^{(1)},\dots,\alpha(x_n)^{(1)}) \otimes \mu^n_\alpha(\alpha(x_1)^{(2)},\dots,\alpha(x_n)^{(2)}) 
		\end{align*}
	\end{small}
	Using that $\alpha$ is a morphism of $\Delta$, it is easy to see that the two sides agree.

	${C_{GS}}^{n,m}_{\alpha\beta} \to {C_{GS}}^{n,m}_{\mu\Delta}$
	\begin{align*}
		(\del_{\beta\Delta} & \delmm \xi) (x_1,\dots,x_{n+1})=                                                                                                                                        
		\Delta\beta^{m-2} \mu^n_\alpha(w_1^{(1)},\dots,x_n^{(1)}) \otimes (\delam \xi)(x_1^{(2)},\dots,x_1^{(2)}) \\
		                    & -  (\delam \xi)(x_1^{(1)},\dots,x_1^{(1)}) \otimes \Delta\beta^{m-2} \mu^n_\alpha(w_1^{(2)},\dots,x_n^{(2)})                                                            \\
		                    & = \Delta \beta^{m-2} \mu^n_\alpha(x_1^{(1)},\dots,x_n^{(1)})  \otimes  \beta \alpha^{n-2}(x_1^{(2)}x_2^{(2)}) \cdot \xi(x_3^{(2)},\dots,x_{n+1}^{(2)})                  \\
		                    & - (-1)^n  \Delta \beta^{m-2} \mu^n_\alpha(w_1^{(1)},\dots,x_n^{(1)})  \otimes \xi(x_1^{(2)},\dots,x_{n-1}^{(2)}) \cdot  \beta \alpha^{n-2}(x_{n}^{(2)}x_{n+1}^{(2)})    \\
		                    & -    \beta \alpha^{n-2}(x_1^{(1)}x_2^{(1)}) \cdot \xi(x_3^{(1)},\dots,x_{n+1}^{(1)}) \otimes  \Delta \beta^{m-2} \mu^n_\alpha(x_1^{(2)},\dots,x_n^{(2)})                \\
		                    & + (-1)^{n}   \xi(x_1^{(1)},\dots,x_{n-1}^{(1)}) \cdot  \beta \alpha^{n-2}(x_{n}^{(1)}x_{n+1}^{(1)}) \otimes  \Delta \beta^{m-2} \mu^n_\alpha(x_1^{(2)},\dots,x_n^{(2)}) 
	\end{align*}
	On the other side we get 
	\begin{align*}
		(\delam \del_{\beta\Delta} \xi) & (x_1,\dots,x_{n+1}) =  \alpha^{n-2}(x_1x_2) \cdot (\del_{\beta\Delta} \xi)(x_3,\dots,x_{n+1})                                                                            \\
		                                & - (\del_{\beta\Delta} \xi)(x_1,\dots,x_{n-1}) \cdot  \alpha^{n-2}(x_nx_{n+1})                                                                                            \\
		                                & = \alpha^{n-2}(x_1x_2) \cdot \left( \Delta \alpha \beta^{m-2} \mu^{n-i}_\alpha(x_3^{(1)},\dots,x_{n+1}^{(1)}) \otimes \xi(x_3^{(2)},\dots,x_{n+1}^{(2)}) \right)         \\
		                                & -   \alpha^{n-2}(x_1x_2) \cdot \left( \xi(x_3^{(1)},\dots,x_{n+1}^{(1)}) \otimes \Delta \alpha \beta^{m-2} \mu^{n-i}_\alpha(x_3^{(2)},\dots,x_{n+1}^{(2)}) \right)       \\
		                                & -   \left( \Delta \alpha \beta^{m-2} \mu^{n-i}_\alpha(x_1^{(1)},\dots,x_{n-1}^{(1)}) \otimes \xi(x_1^{(2)},\dots,x_{n-1}^{(2)}) \right) \cdot \alpha^{n-2}(x_nx_{n+1})   \\
		                                & +   \ \left( \xi(x_1^{(1)},\dots,x_{n-1}^{(1)}) \otimes \Delta \alpha \beta^{m-2} \mu^{n-i}_\alpha(x_1^{(2)},\dots,x_{n-1}^{(2)}) \right) \cdot \alpha^{n-2}(x_nx_{n+1}) 
	\end{align*}
	Using \eqref{eq:mod1}  and \eqref{eq:mu1} we get 
	\begin{small}
		\begin{align*}
			  & \alpha^{n-2}(x_1x_2) \cdot \left( \Delta \alpha \beta^{m-2} \mu^{n-i}_\alpha(x_3^{(1)},\dots,x_{n+1}^{(1)}) \otimes \xi(x_3^{(2)},\dots,x_{n+1}^{(2)}) \right)                                                                                \\
			  & =\left(\beta^{m-2}\alpha^{n-2}(x_1^{(1)}x_2^{(1)}) \cdot \Delta \alpha \beta^{m-2} \mu^{n-i}_\alpha(x_3^{(1)},\dots,x_{n+1}^{(1)})\right) \otimes \left(\beta\alpha^{n-2}(x_1^{(2)}x_2^{(2)})\cdot \xi(x_3^{(2)},\dots,x_{n+1}^{(2)}) \right) \\
			  & =\left( \Delta a \beta^{m-2} \mu^{n+1}_\alpha(x_1^{(1)},\dots,x_{n+1}^{(1)})\right) \otimes \left(\beta\alpha^{n-2}(x_1^{(2)}x_2^{(2)})\cdot \xi(x_3^{(2)},\dots,x_{n+1}^{(2)}) \right)                                                       
		\end{align*}
	\end{small}
	Similarly the other three terms can be manipulated, which shows that the two sides agree.
												
	${C_{GS}}^{n,m}_{\alpha\Delta} \to {C_{GS}}^{n,m}_{\mu\Delta}$
	\begin{small}
		\begin{align*}
			\delmm & \del_{\beta\Delta} \chi(x_1,\dots,x_{n+1})  = \alpha^{n-1}(x_1) \cdot (\del_{\beta\Delta} \chi) (x_2,\dots,x_{n+1})                                                                                   \\
			       & +\sum_{i=1}^n (-1)^i \del_{\beta\Delta} \chi)(\alpha(x_1), \dots, x_i x_{i+1},\dots,\alpha(x_n))                                                                                                      
			+ (-1)^{n+1}  (\del_{\beta\Delta} \chi)(x_1, \dots, x_n) \cdot \alpha^{n-1}( x_{n+1})                                                                                                                 \\
			       & =\alpha^{n-1}(x_1) \cdot \left(\Delta \beta^{m-2} \mu^{n}_\alpha(x_2^{(1)},\dots,x_{n+1}^{(1)}) \otimes \chi(x_2^{(2)},\dots,x_{n+1}^{(2)})  \right)                                                  \\
			       & - \alpha^{n-1}(x_1) \cdot \left( \chi(x_2^{(1)},\dots,x_{n+1}^{(1)}) \otimes \Delta \beta^{m-2} \mu^{n}_\alpha(x_2^{(2)},\dots,x_{n+1}^{(2)}) \right)                                                 \\
			       & + \sum_{i=1}^n (\beta^{m-2} \mu^n_\alpha(\alpha(x_1)^{(1)},\dots,(x_ix_{i+1})^{(1)},\dots,\alpha(x_{n+1}^{(1)})) \otimes  \chi(\alpha(x_1)^{(2)},\dots,(x_ix_{i+1})^{(2)},\dots,\alpha(x_{n+1}^{(2)}) \\
			       & - \sum_{i=1}^n   \chi(\alpha(x_1)^{(1)},\dots,(x_ix_{i+1})^{(1)},\dots,\alpha(x_{n+1}^{(1)}) \otimes \beta^{m-2} \mu^n_\alpha(\alpha(x_1)^{(2)},\dots,(x_ix_{i+1})^{(2)},\dots,\alpha(x_{n+1}^{(2)})) \\
			       & + (-1)^{n+1} \left(\Delta \beta^{m-2} \mu^{n}_\alpha(x_1^{(1)},\dots,x_{n}^{(1)}) \otimes \chi(x_1^{(2)},\dots,x_n^{(2)})  \right) \cdot \alpha^{n-1}(x_{n+1})                                        \\
			       & -(-1)^{n+1} \ \left( \chi(x_1^{(1)},\dots,x_n^{(1)}) \otimes \Delta \beta^{m-2} \mu^{n}_\alpha(x_1^{(2)},\dots,x_{n}^{(2)}) \right) \cdot \alpha^{n-1}(x_{n+1})                                       
		\end{align*}
	\end{small}  																 
		
	We note that $\mu^n_\alpha(\alpha(x_1),\dots,(x_ix_{i+1}),\dots,\alpha(x_{n+1})) = \mu^{n+1}_\alpha(x_1,\dots,x_{n+1})$.
	\begin{align*}
		(\del_{\beta\Delta} \delmm \chi) & (x_1,\dots,x_{n+1})= \Delta \beta^{m-2} \mu^{n+1}_\alpha(x_1^{(1)},\dots,x_{n+1}^{(1)}) \otimes (\del_{\beta\Delta}\chi)(x_1^{(2)},\dots,x_{n+1}^{(2)})                          \\
		                                 & -     (\del_{\beta\Delta}\chi)(x_1^{1)},\dots,x_{n+1}^{(1)}) \otimes  \Delta \beta^{m-2} \mu^{n+1}_\alpha(x_1^{(2)},\dots,x_{n+1}^{(2)})                                         \\
		                                 & = \Delta \beta^{m-2} \mu^{n+1}_\alpha(x_1^{(1)},\dots,x_{n+1}^{(1)}) \otimes \beta \alpha^{n-1}(x_1^{(2)}) \cdot \chi(x_2^{(2)},\dots,x_{n+1}^{(2)})                             \\
		                                 & + \sum_{i=1}^n (-1)^i \Delta \beta^{m-2} \mu^{n+1}_\alpha(x_1^{(1)},\dots,x_{n+1}^{(1)}) \otimes \chi(\alpha(x_1^{(2)}),\dots,x_i^{(2)}x_{i+1}^{(2)},\dots,\alpha(x_{n+1}^{(2)}) \\
		                                 & + (-1)^{n+1} \Delta \beta^{m-2} \mu^{n+1}_\alpha(x_1^{(1)},\dots,x_{n+1}^{(1)}) \otimes \beta   \chi(x_1^{(2)},\dots,x_{n}^{(2)}) \cdot \alpha^{n-1}(x_{n+1}^{(2)})              \\
		                                 & -  \beta \alpha^{n-1}(x_1^{(1)}) \cdot \chi(x_2^{(1)},\dots,x_{n+1}^{(1)}) \otimes \Delta \beta^{m-2} \mu^{n+1}_\alpha(x_1^{(2)},\dots,x_{n+1}^{(2)})                            \\
		                                 & - \sum_{i=1}^n (-1)^i \chi(\alpha(x_1^{(1)}),\dots,x_i^{(2)}x_{i+1}^{(1)},\dots,\alpha(x_{n+1}^{(1)}) \otimes \Delta \beta^{m-2} \mu^{n+1}_\alpha(x_1^{(2)},\dots,x_{n+1}^{(2)}) \\
		                                 & - (-1)^{n+1}  \beta   \chi(x_1^{(1)},\dots,x_{n}^{(1)}) \cdot \alpha^{n-1}(x_{n+1}^{(1)}) \otimes \Delta \beta^{m-2} \mu^{n+1}_\alpha(x_1^{(2)},\dots,x_{n+1}^{(2)})             
	\end{align*}
	Again using  \eqref{eq:mod1}  and \eqref{eq:mu1} we get  
	\begin{align*}
		\alpha^{n-1}(x_1) \cdot \left(\Delta \beta^{m-2} \mu^{n}_\alpha(x_2^{(1)},\dots,x_{n+1}^{(1)}) \otimes \chi(x_2^{(2)},\dots,x_{n+1}^{(2)})  \right) = \\ 
		\Delta \beta^{m-2} \mu^{n+1}_\alpha(x_1^{(1)},\dots,x_{n+1}^{(1)}) \otimes \beta \alpha^{n-1}(x_1^{(2)}) \cdot \chi(x_2^{(2)},\dots,x_{n+1}^{(2)}).   
	\end{align*} With this and similar equalities it is easy to see that the two sides agrees.

\end{proof}

This describes the deformation of a general Hom-bialgebra. More about  deformations of Hom-bialgebras will be given in a forthcoming paper.

If one wants to consider only the case, where $\alpha = \beta$, i.e.  the structure maps are the same, and one wants the deformation to respect this or only consider algebras where this is the case  one has two identify $C^{i+1,j}_{\alpha\Delta}$ with $C^{i,j+1}_{\mu\beta}$ and set $C^{i,j}_{\alpha\beta}$ to $\{0\}$. Note that this does not respect the bicomplex structure, so the resulting complex is no longer a bicomplex.
For example we have $C_{GS}^2 = H^{2,1} \oplus H^{1,1} \oplus H^{1,2}$. 
Since this is often considered we want to give some more details.
	
\begin{prop}
	Let $(A,\mu,\alpha,\Delta,\alpha)$ be a Hom-bialgebra. Then the subspace of ${C_{GS}}$ given in degree $n$ by elements of the form $(\phi_{i,j},\psi_{i,j},\chi_{i,j},\xi_{i,j})_{i+j=n}$, with $\phi_{i,j} \in {C_{GS}}^{i,j}_{\mu\Delta}$, $\psi_{i,j} \in {C_{GS}}^{i,j}_{\alpha\Delta}$, $\chi_{i,j} \in {C_{GS}}^{i,j}_{\alpha\Delta}$ and $\xi_{i,j} \in {C_{GS}}^{i,j}_{\alpha\beta}$ for all $i,j $,  which satisfy $\psi_{i,j}= \chi_{i-1,j+1}$ and $\xi_{i,j}=0$,  is a subcomplex, i.e. it is preserved by the differential.
\end{prop}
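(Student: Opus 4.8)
The plan is to regard the four-summand bicomplex $C_{GS}$ (its bicomplex property being the content of the preceding proposition) together with its total differential $D=\del_{\mathrm{alg}}+(-1)^{i}\del_{\mathrm{coalg}}$ on ${C_{GS}}^{i,j}$, and to verify directly that $D$ sends a tuple with $\xi_{i,j}=0$ and $\psi_{i,j}=\chi_{i-1,j+1}$ to another such tuple. The organising observation I would record first is structural: if one groups the four summands into the two columns $(\phi,\psi)$ and $(\chi,\xi)$, then $\del_{\mathrm{alg}}$ restricts on each column to exactly the $\alpha$-type Hochschild differential $(\phi,\psi)\mapsto(\delmm\phi-\delam\psi,\ \delma\phi-\delaa\psi)$ of \Cref{th:delalg} (on the second column with values in the $\beta$-twisted bimodule $A^{\otimes(m-1)}$); dually, grouping into the rows $(\phi,\chi)$ and $(\psi,\xi)$ makes $\del_{\mathrm{coalg}}$ restrict to the Hom-coassociative differential assembled from $\del_{\Delta\Delta},\del_{\beta\Delta},\del_{\Delta\beta},\del_{\beta\beta}$. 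Thus the whole question reduces to how these elementary maps interact with the two defining constraints.

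The heart of the matter is three coincidences that hold exactly because $\alpha=\beta$. (i) Both $\delma$ and $\del_{\Delta\beta}$ are the graded commutator with the single structure map: on $\theta\colon A^{\otimes a}\to A^{\otimes b}$ one has $\delma\theta=\alpha^{\otimes b}\theta-\theta\alpha^{\otimes a}$ and $\del_{\Delta\beta}\theta=\beta^{\otimes b}\theta-\theta\beta^{\otimes a}$, so $\alpha=\beta$ gives $\delma=\del_{\Delta\beta}$. (ii) On an identified pair $\psi_{i,j}=\chi_{i-1,j+1}$, both sitting in $H^{i-1,j}$, one has $\delaa\psi_{i,j}=\delmm\chi_{i-1,j+1}$: the inner-multiplication sums and their signs coincide verbatim, and although the outer module-action terms of $\delmm$ carry one power of the structure map fewer than those of $\delaa$ (the degree shift between the $\alpha\Delta$- and $\mu\beta$-slots), the $\beta$-twisted action $\beta(x)\cdot y=\alpha(x)\cdot y$ prescribed for the $\mu\beta$-summand supplies precisely the missing power. (iii) Dually $\del_{\Delta\Delta}\psi_{i,j}=\del_{\beta\beta}\chi_{i-1,j+1}$, the missing power in the coalgebra boundary now being restored by the $\alpha$-twisted coaction $\alpha(x_{(-1)})\otimes x_{(0)}$ prescribed for the $\alpha\Delta$-summand.

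Granting these, both constraints are preserved. For the $\alpha\beta$-slot of $Dc$ at bidegree $(p,q)$, all terms involving $\xi$ vanish and one is left with $\delma\chi_{p-1,q}$ coming from $\del_{\mathrm{alg}}$ and $\del_{\Delta\beta}\psi_{p,q-1}$ coming from $\del_{\mathrm{coalg}}$; by (i) and $\psi_{p,q-1}=\chi_{p-1,q}$ these are equal, and as they enter $D$ with opposite sign they cancel, so $\xi'_{p,q}=0$. For the identification, I would compare the $\alpha\Delta$-slot of $Dc$ at $(p,q)$ with the $\mu\beta$-slot of $Dc$ at $(p-1,q+1)$, both of which are maps in $H^{p-1,q}$: after discarding the $\xi$-terms, their surviving contributions are matched termwise by (i) (the $\delma\phi=\del_{\Delta\beta}\phi$ pieces), by (ii) (the $\delaa\psi=\delmm\chi$ pieces) and by (iii) (the $\del_{\Delta\Delta}\psi=\del_{\beta\beta}\chi$ pieces), yielding $\psi'_{p,q}=\chi'_{p-1,q+1}$.

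The step I expect to be the real obstacle is the sign bookkeeping. The three coincidences themselves are sign-clean, but the two pieces $\delaa\psi$ and $\delmm\chi$ that (ii) identifies enter $\del_{\mathrm{alg}}$ with opposite intrinsic signs — the former with a minus as the $\alpha\Delta$-output, the latter with a plus as the $\mu\beta$-output — so the literal sign-free identification cannot be exactly stable. Tracking this through shows that, with the total sign $(-1)^{i}\del_{\mathrm{coalg}}$, one must read the identification with a compensating sign alternating in the first index, e.g.\ $\psi_{i,j}=(-1)^{i+1}\chi_{i-1,j+1}$ (equivalently, rescale the summands ${C_{GS}}^{i,j}_{\alpha\Delta}$ by $(-1)^{i}$); with that normalisation every comparison above becomes the explicit term-by-term identity, entirely parallel to the bicomplex verification already carried out.
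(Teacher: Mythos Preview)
Your approach is essentially the same as the paper's: the paper's proof consists of exactly the identities you isolate as (i) and (ii) (it records $\delmm\chi_{i,j+1}=\delaa\psi_{i+1,j}$, $\delma\phi_{i,j}=\del_{\Delta\beta}\phi_{i,j}$, and $\delma\chi_{i-1,j}=\del_{\Delta\beta}\psi_{i,j-1}$), leaving the dual identity (iii) implicit. Your write-up is in fact more complete than the paper's, which is a bare three-line sketch; in particular, your final paragraph on signs flags a genuine subtlety that the paper does not address at all, and your suggested resolution (absorbing an alternating sign into the identification, equivalently rescaling the $\alpha\Delta$-summands) is a reasonable way to make the term-by-term match literal.
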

\begin{proof} We need to verify that $\del (\phi_{i,j},\psi_{i,j},\chi_{i,j},\xi_{i,j})_{i+j=n}$ again satisfies the two constraints. 
	For the first we have 
	$\delmm \chi_{i,j+1} = \delaa  \psi_{i+1,j}$ and 
	$\delma \phi_{i,j} = \del_{\Delta\beta} \phi_{i,j} $.										
	For the second we have 
	$\delma \chi_{i-1,j} =  \del_{\Delta\beta} \psi_{i,j-1}  $ since $\alpha= \beta$.
\end{proof}
	
So the complex can be given by 
\begin{align}
	{C_{GS}}^i = & \bigoplus_{j= 1}^{i-1} {C_{GS}}_\mu^{j,i-j} \oplus \bigoplus_{j=1}^{i-2} {C_{GS}}_\alpha^{j,i-j-1} \\
	=            & \bigoplus_{j= 1}^{i-1} H^{j,i-j} \oplus \bigoplus_{j=1}^{i-2} H^{j,i-j-1} .                         
\end{align}
Note that the degree of ${C_{GS}}_\mu^{i,j}$ is $i+j$, while the degree of ${C_{GS}}_\alpha^{i,j}$ is $i+j+1$.
The differential consists of maps
\begin{align*}
	\delmm :      & {C_{GS}}_\mu^{i,j} \to {C_{GS}}_\mu^{i+1,j}:                                                            \\ 
	(\delmm \phi) & (x_1, \dots, x_{i+1})  = \alpha^{i-1}(x_1)\cdot\phi(x_2,\dots,x_{i+1})                                  \\
	              & + \sum_{k=1}^i  (-1)^k \phi(\alpha(x_1), \dots, x_k x_{k+1}, \dots, \alpha(x_{i+1}))                    
	+ (-1)^{i+1} \phi(x_1,\dots,x_i) \cdot \alpha^{i-1}(x_{i+1}),      \\
	\delma:       & {C_{GS}}_\mu^{i,j} \to {C_{GS}}_\alpha^{i,j}:                                                           \\
	(\delma \phi) & (x_1,\dots,x_i)        = \alpha^{\otimes j} \phi(x_1,\dots, x_i) - \phi(\alpha(x_1),\dots,\alpha(x_i)), \\
	\delaa :      & {C_{GS}}_\alpha^{i,j} \to {C_{GS}}_\alpha^{i+1,j}:                                                      \\ 
	(\delaa \psi) & (x_1, \dots,x_{i+1})   = \alpha^{i}(x_1) \cdot \psi(x_2,\dots,x_{i+1})                                  \\
	              & + \sum_{k=1}^i  (-1)^k \psi(\alpha(x_1), \dots, x_k x_{k+1}, \dots, \alpha( x_{i+1}))                   
	+ (-1)^{i+1} \psi(x_1,\dots,x_i) \cdot \alpha^{i}(x_{i+1}),                           \\
	\delam:       & {C_{GS}}_\alpha^{i,j} \to {C_{GS}}_\mu^{i+2,j}:                                                         \\ 
	(\delam\psi)  & (x_1, \dots, x_{i+2})    =                                                                              
	\alpha^{i}(x_1 x_2) \cdot \psi(x_3,\dots, x_{i+2}) - \psi(x_1, \dots,x_{i}) \cdot\alpha^i (x_{i+1} x_{i+2}),
\end{align*}
Dually we define the differentials
$\del_{\Delta\Delta}:{C_{GS}}_\mu^{i,j} \to {C_{GS}}_\mu^{i,j+1}$,
$\del_{\beta\beta}:{C_{GS}}_\alpha^{i,j} \to {C_{GS}}_\alpha^{i,j+1}$ and
$ \del_{\beta\Delta}:{C_{GS}}_\alpha^{i,j} \to {C_{GS}}_\mu^{i,j+2}$. 
	
The different parts of the differential can be seen in the following diagram:
\begin{center}
	\begin{tikzpicture}[scale=2.5]
		\node (m11) at (1,1) {${C_{GS}}_\mu^{i,j}$};
		\node (m12) at (1,2)  {${C_{GS}}_\mu^{i,j+1}$};
		\node (m21) at (2,1)   {${C_{GS}}_\mu^{i+1,j}$};
		\node (m31) at (3,1)   {${C_{GS}}_\mu^{i+2,j}$};
		\node (m13) at (1,3)   {${C_{GS}}_\mu^{i,j+2}$};
		\node (a11) at (1.5,1.5)  {${C_{GS}}_\alpha^{i,j}$};
		\node (a12) at (1.5,2.5)  {${C_{GS}}_\alpha^{i,j+1}$};
		\node (a21) at (2.5,1.5)   {${C_{GS}}_\alpha^{i+1,j}$};
		\draw[->] (m11) -- (m12);
		\draw[->] (m11) -- (m21);
		\draw[->] (m11) -- (a11);
		\draw[->] (a11) -- (m13);
		\draw[->] (a11) -- (m31);
		\draw[->] (a11) -- (a12);
		\draw[->] (a11) -- (a21);		
	\end{tikzpicture} 
\end{center}

\bibliographystyle{bibstyle}
\bibliography{paperpr}

\end{document}